\DeclareMathAlphabet{\mathpzc}{OT1}{pzc}{m}{it}
\definecolor{verde}{rgb}{0.,0.7,0.}
\definecolor{indigo}{rgb}{.18, .34, .78}
\definecolor{indigo1}{rgb}{.18, .24, .78}
\definecolor{indigo2}{rgb}{.18, .14, .78}
\definecolor{indigo3}{rgb}{.18, 0., .78}
\definecolor{rojo}{rgb}{1,0,0}
\definecolor{negro}{rgb}{0,0,0}
\definecolor{lila}{rgb}{.46, .16, .78}
\definecolor{lila1}{rgb}{.46, .16, .86}
\definecolor{lila2}{rgb}{.56, .16, .86}
	\definecolor{lila3}{rgb}{.63, .16, .78}
\definecolor{lila4}{rgb}{.7, .16, .78}
\definecolor{lila5}{rgb}{.78, .26, .78}
\definecolor{lila6}{rgb}{.6, 0., .78}
\theoremstyle{plain}
\newtheorem{thm}{Theorem}[section]
\newtheorem{lma}[thm]{Lemma}
\newtheorem{cor}[thm]{Corollary}
\newtheorem{defn}[thm]{Definition}
\newtheorem{propdefn}[thm]{Proposition and Definition}
\newtheorem{rem}[thm]{Remark}
\newtheorem{prop}[thm]{Proposition}
\newtheorem{ex}[thm]{Example}
\newcommand{\qed}{\hfill\quad\fbox{\rule[0mm]{0,0cm}{0,0mm}}  \par\bigskip}
\newcommand{\x}{\mbox{-}}
\newcommand{\rtr}{\hspace{-0,08cm}\triangleright}
\newcommand{\ltr}{\hspace{-0,08cm}\triangleleft}
\newcommand{\bEM}{{\rm bEM}}
\newcommand{\EM}{{\rm EM}}
\newcommand{\Bimnd}{{\rm Bimnd}}
\newcommand{\ot}{\otimes}
\newcommand{\C}{{\mathcal C}}
\newcommand{\A}{{\mathcal A}}
\newcommand{\B}{{\mathcal B}}
\newcommand{\E}{{\mathcal E}}
\newcommand{\crta}{\overline}
\newcommand{\Id}{\operatorname {Id}}
\newcommand{\ro}{\rho}
\newcommand{\Epsilon}{\varepsilon}
\def\K{{\mathcal K}}  
\newcommand{\Mod}{\operatorname{Mod}}
\newcommand{\cref}[1]{C.~\ref{c:#1}}
\newcommand{\exlabel}[1]{\label{ex:#1}}
\newcommand{\exref}[1]{Example~\ref{ex:#1}}
\newcommand{\lelabel}[1]{\label{le:#1}}
\newcommand{\eqlabel}[1]{\label{eq:#1}}
\newcommand{\equref}[1]{(\ref{eq:#1})}
\newcommand{\thlabel}[1]{\label{th:#1}}
\newcommand{\thref}[1]{Theorem~\ref{th:#1}}
\newcommand{\delabel}[1]{\label{de:#1}}
\newcommand{\deref}[1]{Definition~\ref{de:#1}}
\newcommand{\prlabel}[1]{\label{pr:#1}}
\newcommand{\prref}[1]{Proposition~\ref{pr:#1}}
\newcommand{\colabel}[1]{\label{co:#1}}
\newcommand{\coref}[1]{Corollary~\ref{co:#1}}
\newcommand{\rmlabel}[1]{\label{rm:#1}}
\newcommand{\rmref}[1]{Remark~\ref{rm:#1}}
\newcommand{\selabel}[1]{\label{se:#1}}
\newcommand{\seref}[1]{Section~\ref{se:#1}}
\newcommand{\sslabel}[1]{\label{ss:#1}}
\begin{document}

\date{}

\title{Paired wreaths: \\ towards a 2-categorical atlas \\ of cross products}
\author{Bojana Femi\'c \vspace{6pt} \\
{\small Facultad de Ingenier\'ia, \vspace{-2pt}}\\
{\small  Universidad de la Rep\'ublica} \vspace{-2pt}\\
{\small  Julio Herrera y Reissig 565,} \vspace{-2pt}\\
{\small  11 300 Montevideo, Uruguay}}

\maketitle

\begin{abstract}
After we introduced biwreaths and biwreath-like objects in our previous paper, in the present one we define paired wreaths. 
In a paired wreath there is a monad $B$ and a comonad $F$ over the same 
0-cell in a 2-category $\K$, so that $F$ is a left wreath around $B$ and $B$ is a right cowreath around $F$, and moreover, $FB$ is a bimonad in $\K$. 
The corresponding 1-cell $FB$ in the setting of a biwreath and a biwreath-like object was not necessarilly a bimonad. 
We obtain a 2-categorical version of the Radford biproduct and Sweedler's crossed (co)product, that are on one hand, both a biwreath and a biwreath-like object, respectively, 
and on the other hand, they are also paired wreaths. We show that many known crossed (bi)products in the literature are special cases of paired wreaths, including 
cocycle cross product bialgebras of Bespalov and Drabant in braided monoidal categories. This is a part of a project of constructing a kind of a 2-categorical atlas of 
all the known crossed (bi)products. We introduce a Hopf datum in $\K$ which contains part of the structure of a paired wreath. We define Yang-Baxter equations and 
naturality conditions of certain distributive laws in $\K$ and study when Hopf data are paired wreaths. From the notion of a Hopf datum new definitions of 
Yetter-Drinfel`d modules, (co)module (co)monads, 2-(co)cycles and (co)cycle twisted (co)actions in a 2-categorical setting are suggested. 

\bigbreak
{\em Mathematics Subject Classification (2010): 18D10, 16W30, 19D23.}

{\em Keywords: 2-categories, 2-monads, wreaths, braided monoidal categories}
\end{abstract}

\section{Introduction}

Lack and Street introduced wreaths in \cite{LS} as monads in the Eilenberg-Moore category $\EM^M(\K)$ of monads in a 2-category $\K$. 
This short and elegant definition of a wreath 
when unpacked provides the necessary 
data to recover crossed products in different algebraic settings, as it has been showed {\em e.g.} in \cite{LS, BC}.  With the motivation, on the one hand, to understand why 
certain structures in the examples obey the laws of a wreath, and on the other to investigate an analogue of a wreath in terms of bimonads, 
we introduced biwreaths and biwreath-like objects in \cite{Femic5}. For this aim we first defined bimonads in $\K$ following the line of \cite{Wisb} 
(following the line of \cite{McC, Moe}  
would require that the 0-cells of $\K$ posses a monoidal structure, which would be a restriction for our purposes). Then we defined biwreaths as bimonads in the 
Eilenberg-Moore category $\bEM(\K)$ of bimonads in $\K$. Biwreaths consist of 1-cells $B,F:\A\to\A$ both over the same 0-cell $\A$ in $\K$, together with some 2-cells 
satisfying certain axioms. Biwreaths are in particular wreaths, cowreaths, mixed wreaths and mixed cowreaths. 
With the notion of biwreath it becomes clear why the structures in some afore-mentioned  examples have the form they have. 
Though, in the biwreaths such as we defined them, 
some structure 2-cells necessarily turn out to be trivial (some (co)module structures and (co)cycles). 
In order to include the non-trivial structures in the study, we introduced biwreath-like objects and mixed biwreath-like objects. 
However, from all these constructions only in the {\em canonical biwreath} 
(in which the (co)wreath structure 2-cells are images of the embedding 2-functor $\Bimnd(\K)\hookrightarrow\bEM(\K)$ from the 2-category of bimonads in $\K$) 
the 1-cell $FB$ turns out to be a bimonad in $\K$. This is the Radford biproduct in 2-categorical terms. A biwreath-like object turns out to be the Sweedler's crossed (co)product, 
while a mixed biwreath-like object turns out to be 3-cocycle twisted comodule monad and 3-cycle twisted module comonad, all in the 2-categorical setting. 

In the present paper we study structures similar to the previous ones but in which $FB$ is a bimonad. 
While in a (left) biwreath $F$ is a left wreath and left cowreath around $B$, once we fix $F$ to be a left wreath around $B$, the only resting possibility for $FB$ to have a bimonad 
structure is to consider $B$ as a right cowreath around $F$. For this reason we call our new objects {\em paired wreaths}. 
2-categorical Radford biproduct and Sweedler's crossed (co)product, that are a biwreath and a biwreath-like object, respectively, fit also to the setting of a paired wreath. 
Paired wreaths represent a 2-categorical ``origin'' of many crossed (bi)products known in algebra. In particular, when $\K=\hat\C$ is the 2-category induced by a braided monoidal category $\C$, 
our paired wreaths and the consequent notion of a Hopf datum recover the cross product bialgebras and Hopf data from \cite{BD}. The latter, in turn, generalize 
to the setting of braided monoidal categories all cross products known to the date, \cite{Maj5, Maj6, Sch3, BD1}. Broadening of this ``atlas'' of crossed products in a 2-categorical setting 
in order to collect some other structures that appeared in the literature in the last years, is a topic of our future research. 

As we mentioned, every paired wreath is a Hopf datum. We introduce Yang-Baxter type equations and a sort of naturality for certain distributive laws in 2-categories in order to study 
under which conditions a Hopf datum is a paired wreath, \thref{sufficient for bimonad}. Along the way we analyze when a {\em monad Hopf datum} is a wreath, \prref{psi_1,2}, \prref{psi_4}. 

Paired wreaths and Hopf data give rise to new definitions of Yetter-Drinfel`d modules, (co)module (co)monads, 2-(co)cycles and (co)cycle twisted (co)actions in a 2-categorical setting. 
These new interpretations of the known notions may be used to study classical results in this new setting and in a convenient 2-category $\K$ ({\em e.g.} the one induced by the braided 
monoidal category of modules over a commutative ring), being thus a source for new research. 

\bigskip

We organized the paper as follows. In Section 2 we set the notation used, we recall some definitions and results from \cite{Femic5} and we introduce $\tau$-bimonads in 2-categories. 
Section 3 is devoted to the definition of paired wreaths and to the analysis of which structures underlie them. In Section 4 we define Hopf data collecting some consequent structures of 
paired wreaths and study when a Hopf datum is a paired wreath. Here is where we define Yang-Baxter type equations and the mentioned naturality conditions in 2-categories. We also give a 
list of examples of the known crossed (bi)products that come out as special cases of paired wreaths. Finally, we indicate the new definitions in a 2-categorical setting of the classically 
known objects, as we listed above.

\section{Preliminaries: notation and recalling some basic notions}

We assume the reader is familiar with braided monoidal categories and string diagram notation, to have basic knowledge of 2-categories, (co)monads and (co)wreaths in 
2-categories. For reference we recommend \cite{K, Be, Bo, St1, LS}. 
Throughout $\K$ will denote a 2-category, $\C$ will denote a (braided) monoidal category with braiding 
$\gbeg{2}{1}
\gbr \gnl
\gend$. By $\hat\C$ we will mean the 2-category induced by $\C$. 1-cells in $\hat\C$ are objects of $\C$ and 2-cells in $\hat\C$ are morphisms in $\C$. 
From here it is clear that a composition $YX$ of 1-cells $X: \A\to\B$ and $Y: \B\to\E$, 
where $\A, \B, \E$ are 0-cells in $\hat\C$, translates to the tensor product $X\ot Y$ in $\C$, that is, the order of 1-cells in the composition is opposite when seen from $\hat\C$ 
versus when seen from $\C$. A fortiori the composition of 2-cells is opposite, too, this means that the diagrams written in $\hat\C$ describing identities with 2-cells  
are left-right symmetric to the corresponding ones in $\C$. To simplify the interpretation of identities we will be working with we are not going to turn the diagrams 
from $\hat\C$ to read them in $\C$. Observe that the composition of composable 1-cells in $\K$ gives them a monoidal structure, thus it is justified to use string diagrams to write 
identities between 2-cells in $\K$ (which act on composable 1-cells). 
Multiplication and unit of a monoid, commultiplication and counit of a comonoid (both in $\C$ and $\K$) we write respectively: 
\begin{center}
\begin{tabular}{ccp{0.5cm}cccp{0.5cm}c}
{\footnotesize multiplication} & {\footnotesize unit} & & {\footnotesize comultiplication} & {\footnotesize counit} & & \\
 $\gbeg{2}{2}
\got{1}{} \gnl
\gmu \gnl
\gend$ & $\gbeg{1}{2}
\got{1}{} \gnl
\gu{1} \gnl
\gend$ & & $
\gbeg{2}{2}
\got{2}{} \gnl
\gcmu \gnl
\gend$ &  $
\gbeg{1}{2}
\got{1}{} \gnl
\gcu{1} \gnl
\gend$
\end{tabular}
\end{center}

\medskip

In \cite[Definition 2.3]{Femic5} we defined modules over a monad, comodules over a comonad and bimonads in a 2-category $\K$. In the same paper we used monadic and comonadic 
distributive laws in $\K$. 
We recall the respective definitions here.

\begin{defn} \delabel{distr}
Let $(\A, T, \mu, \eta)$ be a monad, $(\A, D, \Delta, \Epsilon)$ a comonad and $F:\A\to\A$ a 1-cell in $\K$.
\begin{enumerate}[(a)]
\item A 2-cell $\psi:TF\to FT$ in $\K$ is called a left monadic distributive law if identities \equref{psi laws} hold.
\item A 2-cell $\phi:FD\to DF$ in $\K$ is called a left comonadic distributive law if identities \equref{phi laws} hold.
\end{enumerate}
\vspace{-1,4cm}
\begin{center} \hspace{-0,2cm}
\begin{tabular}{p{7.2cm}p{0cm}p{8cm}}
\begin{equation}\eqlabel{psi laws}
\gbeg{3}{5}
\got{1}{T}\got{1}{T}\got{1}{F}\gnl
\gcl{1} \glmpt \gnot{\hspace{-0,34cm}\psi} \grmptb \gnl
\glmptb \gnot{\hspace{-0,34cm}\psi} \grmptb \gcl{1} \gnl
\gcl{1} \gmu \gnl
\gob{1}{F} \gob{2}{T}
\gend=
\gbeg{3}{5}
\got{1}{T}\got{1}{T}\got{1}{F}\gnl
\gmu \gcn{1}{1}{1}{0} \gnl
\gvac{1} \hspace{-0,34cm} \glmptb \gnot{\hspace{-0,34cm}\psi} \grmptb  \gnl
\gvac{1} \gcl{1} \gcl{1} \gnl
\gvac{1} \gob{1}{F} \gob{1}{T}
\gend;
\quad
\gbeg{2}{5}
\got{3}{F} \gnl
\gu{1} \gcl{1} \gnl
\glmptb \gnot{\hspace{-0,34cm}\psi} \grmptb \gnl
\gcl{1} \gcl{1} \gnl
\gob{1}{F} \gob{1}{T}
\gend=
\gbeg{3}{5}
\got{1}{F} \gnl
\gcl{1} \gu{1} \gnl
\gcl{2} \gcl{2} \gnl
\gob{1}{F} \gob{1}{T}
\gend
\end{equation} & &
\begin{equation}\eqlabel{phi laws}
\gbeg{3}{5}
\got{1}{F} \got{2}{D}\gnl
\gcl{1} \gcmu \gnl
\glmptb \gnot{\hspace{-0,34cm}\phi} \grmptb \gcl{1} \gnl
\gcl{1} \glmptb \gnot{\hspace{-0,34cm}\phi} \grmptb \gnl
\gob{1}{D} \gob{1}{D} \gob{1}{F}
\gend=
\gbeg{3}{5}
\got{2}{F} \got{1}{\hspace{-0,2cm}D}\gnl
\gcn{1}{1}{2}{2} \gcn{1}{1}{2}{2} \gnl
\gvac{1} \hspace{-0,34cm} \glmpt \gnot{\hspace{-0,34cm}\phi} \grmptb \gnl
\gvac{1} \hspace{-0,2cm} \gcmu \gcn{1}{1}{0}{1} \gnl
\gvac{1} \gob{1}{D} \gob{1}{D} \gob{1}{F}
\gend;
\quad
\gbeg{3}{5}
\got{1}{F} \got{1}{D} \gnl
\gcl{1} \gcl{1} \gnl
\glmptb \gnot{\hspace{-0,34cm}\phi} \grmptb \gnl
\gcu{1} \gcl{1} \gnl
\gob{3}{F}
\gend=
\gbeg{3}{5}
\got{1}{F} \got{1}{D} \gnl
\gcl{1} \gcl{1} \gnl
\gcl{2}  \gcu{1} \gnl
\gob{1}{F}
\gend
\end{equation}
\end{tabular}
\end{center} 
\end{defn}

\begin{defn} \delabel{bimonad} \cite[Definition 2.2]{Femic5}
A 1-cell $B:\A\to\A$ in $\K$ is called a {\em bimonad} if $(\A, B, \mu, \eta)$ is a monad, $(\A, B, \Delta, \Epsilon)$ a comonad and there is a left monadic and comonadic 
distributive law $\lambda: BB\to BB$ in $\K$ such that the following conditions are fulfilled:
$$
\gbeg{2}{3}
\got{1}{B} \got{1}{B} \gnl
\gcl{1} \gcl{1} \gnl
\gcu{1}  \gcu{1} \gnl
\gend=
\gbeg{2}{3}
\got{1}{B} \got{1}{B} \gnl
\gmu \gnl
\gvac{1} \hspace{-0,2cm} \gcu{1} \gnl
\gob{1}{}
\gend
\hspace{1,5cm}
\gbeg{2}{3}
\gu{1}  \gu{1} \gnl
\gcl{1} \gcl{1} \gnl
\gob{1}{B} \gob{1}{B}
\gend=
\gbeg{2}{3}
\gu{1} \gnl
\hspace{-0,34cm} \gcmu \gnl
\gob{1}{B} \gob{1}{B}
\gend
\hspace{1,5cm}
\gbeg{1}{2}
\gu{1} \gnl
\gcu{1} \gnl
\gob{1}{}
\gend=
\Id_{id_{\A}}
\hspace{1,5cm}
\gbeg{3}{5}
\got{1}{B} \got{3}{B} \gnl
\gwmu{3} \gnl
\gvac{1} \gcl{1} \gnl
\gwcm{3} \gnl
\gob{1}{B}\gvac{1}\gob{1}{B}
\gend=
\gbeg{4}{5}
\got{1}{B} \got{3}{B} \gnl
\gcl{1} \gwcm{3} \gnl
\glmptb \gnot{\hspace{-0,34cm}\lambda} \grmptb \gvac{1} \gcl{1} \gnl
\gcl{1} \gwmu{3} \gnl
\gob{1}{B} \gvac{1} \gob{2}{\hspace{-0,34cm}B.}
\gend
$$
\end{defn}

\begin{defn} \delabel{(co)modules} \cite[Definition 2.3]{Femic5}
Let $(\A, T, \mu, \eta)$ be a monad and $(\A, D, \Delta, \Epsilon)$ a comonad in $\K$.
\begin{enumerate}[(a)]
\item A 1-cell $F:\B\to\A$ in $\K$ is called a {\em left $T$-module} if there is a 2-cell $\nu: TF\to F$ such that $\nu(\mu\times\Id_F)=\nu(\Id_T\times\nu)$
and $\nu(\eta\times\Id_F)=\Id_F$ holds.
\item A 1-cell $F:\A\to\B$ in $\K$ is called a {\em right $T$-module} if there is a 2-cell $\nu: FT\to F$ such that $\nu(\Id_F\times\mu)=\nu(\nu\times\Id_T)$
and $\nu(\Id_F\times\eta)=\Id_F$ holds.
\item A 1-cell $F:\B\to\A$ in $\K$ is called a {\em left $D$-comodule} if there is a 2-cell $\lambda: F\to DF$ such that $(\Delta\times\Id_F)\lambda=(\Id_D\times\lambda)\lambda$
and $(\Epsilon\times\Id_F)\lambda=\Id_F$ holds.
\item A 1-cell $F:\A\to\B$ in $\K$ is called a {\em right $D$-comodule} if there is a 2-cell $\rho: F\to FD$ such that $(\Id_F\times\Delta)\rho=(\rho\times\Id_D)\rho$
and $(\Id_F\times\Epsilon)\rho=\Id_F$ holds.
\end{enumerate}
\end{defn}

\medskip

In \cite[Proposition 2.4]{Femic5} we proved:

\begin{prop}  \prlabel{distr-actions}

Let $F:\A\to\A$ be a 1-cell in $\K$.
\begin{enumerate}[(a)]
\item
Given a monad  $(\A, B, \mu, \eta)$ with a left monadic distributive law $\psi:BF\to FB$ and a 2-cell
$\Epsilon=
\gbeg{2}{1}
\got{1}{B} \gnl
\gcu{1} \gnl
\gend$ such that
$\gbeg{2}{2}
\got{1}{B} \got{1}{B} \gnl
\gcu{1}  \gcu{1} \gnl
\gend=
\gbeg{2}{3}
\got{1}{B} \got{1}{B} \gnl
\gmu \gnl
\gvac{1} \hspace{-0,2cm} \gcu{1} \gnl
\gend$ and
$\gbeg{1}{2}
\gu{1} \gnl
\gcu{1} \gnl
\gob{1}{}
\gend=
\Id_{id_{\A}}$
hold, then the 2-cell:
\begin{equation*} 
\gbeg{2}{3}
\got{1}{B} \got{1}{F} \gnl
\glm \gnl
\gob{3}{F}
\gend=
\gbeg{2}{4}
\got{1}{B} \got{1}{F} \gnl
\glmptb \gnot{\hspace{-0,34cm}\psi} \grmptb \gnl
\gcl{1} \gcu{1} \gnl
\gob{1}{F}
\gend
\end{equation*}
makes $F$ a left $B$-module.
\item
Given a comonad $(\A, B, \Delta, \Epsilon)$ with a left comonadic distributive law $\phi:FB\to BF$ and a 2-cell
$\eta=
\gbeg{2}{2}
\gu{1} \gnl
\gob{1}{B}
\gend$ such that
$\gbeg{2}{2}
\gu{1} \gu{1} \gnl
\gob{1}{B} \gob{1}{B}
\gend=
\gbeg{2}{3}
\gu{1} \gnl
\hspace{-0,34cm} \gcmu \gnl
\gob{1}{B} \gob{1}{B}
\gend$
and
$\gbeg{1}{2}
\gu{1} \gnl
\gcu{1} \gnl
\gend=
\Id_{id_{\A}}$ hold, then the 2-cell:
\begin{equation*} 
\gbeg{2}{3}
\got{3}{F} \gnl
\glcm \gnl
\gob{1}{B} \gob{1}{F}
\gend=
\gbeg{2}{4}
\got{1}{F} \gnl
\gcl{1} \gu{1} \gnl
\glmptb \gnot{\hspace{-0,34cm}\phi} \grmptb \gnl
\gob{1}{B} \gob{1}{F}
\gend
\end{equation*}
makes $F$ a left $B$-comodule.
\item In particular, given a bimonad $(\A, B, \mu, \eta, \Delta, \Epsilon, \lambda)$ and distributive laws $\psi:BF\to FB$ and $\phi:FB\to BF$ as in (a) and (b), the 2-cells
$\gbeg{2}{3}
\got{1}{B} \got{1}{F} \gnl
\glm \gnl
\gob{3}{F}
\gend$ and 
$\gbeg{2}{3}
\got{3}{F} \gnl
\glcm \gnl
\gob{1}{B} \gob{1}{F}
\gend$ \hspace{0,2cm} 
make $F$ a left $B$-module and a left $B$-comodule.
\end{enumerate}
\end{prop}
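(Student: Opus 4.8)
The plan is to deduce part (c) directly from parts (a) and (b), since the axioms of a bimonad in \deref{bimonad} supply exactly the side conditions that those two parts require, with nothing left to prove beyond matching hypotheses. First I would invoke part (a): a bimonad $(\A, B, \mu, \eta, \Delta, \Epsilon, \lambda)$ contains in particular the monad $(\A, B, \mu, \eta)$, and by assumption $\psi:BF\to FB$ is a left monadic distributive law of the type in (a). It then only remains to observe that the counit $\Epsilon$ meets the two side conditions demanded there. The multiplicativity condition $\gbeg{2}{2}\got{1}{B} \got{1}{B} \gnl \gcu{1} \gcu{1} \gnl \gend=\gbeg{2}{3}\got{1}{B} \got{1}{B} \gnl \gmu \gnl \gvac{1} \hspace{-0,2cm} \gcu{1} \gnl \gend$ is literally the first axiom of \deref{bimonad}, and the condition $\gbeg{1}{2}\gu{1} \gnl \gcu{1} \gnl \gob{1}{}\gend=\Id_{id_{\A}}$ is its third axiom. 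Hence (a) applies verbatim and the 2-cell $\gbeg{2}{3}\got{1}{B} \got{1}{F} \gnl \glm \gnl \gob{3}{F}\gend$ makes $F$ a left $B$-module.

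Symmetrically, I would invoke part (b) using the comonad $(\A, B, \Delta, \Epsilon)$ that the bimonad also carries, together with the left comonadic distributive law $\phi:FB\to BF$. The two conditions on the unit $\eta$ required by (b) are the comultiplicativity $\gbeg{2}{2}\gu{1} \gu{1} \gnl \gob{1}{B} \gob{1}{B}\gend=\gbeg{2}{3}\gu{1} \gnl \hspace{-0,34cm} \gcmu \gnl \gob{1}{B} \gob{1}{B}\gend$, which is the second axiom of \deref{bimonad}, together once more with $\gbeg{1}{2}\gu{1} \gnl \gcu{1} \gnl \gend=\Id_{id_{\A}}$, its third axiom. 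Thus (b) applies and the 2-cell $\gbeg{2}{3}\got{3}{F} \gnl \glcm \gnl \gob{1}{B} \gob{1}{F}\gend$ makes $F$ a left $B$-comodule. Combining the two conclusions gives the statement, namely that both structures sit on the same 1-cell $F$.

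The only point requiring care is the bookkeeping that matches the diagrammatic hypotheses of (a) and (b) against the first three conditions of \deref{bimonad}, read with the $\hat\C$-versus-$\C$ orientation conventions fixed in the preliminaries; I do not expect any genuine obstacle. In particular it is worth noting what is \emph{not} needed: the fourth bimonad axiom, which relates $\mu$, $\Delta$ and $\lambda$, is never used for part (c), and no compatibility or interaction between the three distributive laws $\psi$, $\phi$ and $\lambda$ is invoked. The substance of (c) is therefore simply that the module structure built from $\psi$ and the comodule structure built from $\phi$ are each legitimate, which is immediate once (a) and (b) are applied separately.
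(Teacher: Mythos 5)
Your reduction of part (c) to parts (a) and (b) is correct as far as it goes: the first and third axioms of \deref{bimonad} are exactly the side conditions on $\Epsilon$ required by (a), the second and third axioms are exactly the conditions on $\eta$ required by (b), and neither the fourth axiom nor any compatibility between $\psi$, $\phi$ and $\lambda$ is needed. The genuine gap is that this is \emph{all} your proposal proves. The statement is the whole proposition, and its mathematical content sits in parts (a) and (b): one must actually verify that the 2-cell $\nu:=(\Id_F\times\Epsilon)\circ\psi$ satisfies the two left $B$-module axioms of \deref{(co)modules}, and that $\lambda:=\phi\circ(\Id_F\times\eta)$ satisfies the two left $B$-comodule axioms. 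You treat both as already established, so your argument consists only of the (easy) bookkeeping step and proves nothing about why the distributive-law axioms \equref{psi laws}, \equref{phi laws} together with the (co)unit compatibilities produce genuine (co)actions.

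What is missing is a short but indispensable diagram chase. For (a): whiskering the first identity of \equref{psi laws} (with $T=B$) by $\Id_F\times\Epsilon$ turns its right-hand side into $\nu\circ(\mu\times\Id_F)$, while on the left-hand side the hypothesis $\Epsilon\circ\mu=\Epsilon\times\Epsilon$ lets you split the single counit into two, and the interchange law in $\K$ then slides one counit onto each copy of $\psi$, yielding $\nu\circ(\Id_B\times\nu)$; this is associativity. Unitality $\nu\circ(\eta\times\Id_F)=\Id_F$ follows from the second identity of \equref{psi laws} together with $\Epsilon\circ\eta=\Id_{id_\A}$. Part (b) is the formally dual computation: whisker the first identity of \equref{phi laws} by $\Id_F\times\eta$, use $\Delta\circ\eta=\eta\times\eta$ and interchange to obtain coassociativity, and use the second identity of \equref{phi laws} with $\Epsilon\circ\eta=\Id_{id_\A}$ for the counit axiom. (Note that the present paper only recalls this proposition from \cite{Femic5} without reproving it, but a complete proof of the statement must contain these verifications; once they are in place, your treatment of (c) is fine.)
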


\subsection{Biwreaths versus biwreath-like objects} \selabel{bw vs bl}


In \cite{Femic5} we defined biwreaths and (mixed) biwreath-like objects, we resume them here. 

Firstly, we introduced the 2-category $\bEM(\K)$. It is the Eilenberg-Moore category of bimonads in $\K$. Without entering into details, let us say that 
2-cells in $\bEM(\K)$ are pairs $(\rho_M, \rho_C)$, where $\rho_M:F\to GB$ and $\rho_C: FB\to G$ are 2-cells in $\K$ (here $B$ is a 0-cell and $F$ and $G$ come from the involved 1-cells). 
We say that $\rho_M$ and $\rho_C$ are {\em canonical} if \equref{canonical} holds. Their {\em canonical restrictions} are given by \equref{restr}. \vspace{-0,4cm}
\begin{center} 
\begin{tabular}{p{7,6cm}p{0,6cm}p{6cm}}
\begin{equation} \eqlabel{canonical} 
\rho_M=
\gbeg{2}{5}
\got{1}{F} \gnl
\glmptb \gnot{\hspace{-0,34cm}\rho_M} \grmpb \gnl
\gcl{1} \gcu{1} \gnl
\gcl{1} \gu{1} \gnl
\gob{1}{G} \gob{1}{B} \gnl
\gend,\hspace{0,5cm}
\rho_C=
\gbeg{2}{5}
\got{1}{F} \got{1}{B} \gnl
\gcl{1} \gcu{1} \gnl
\gcl{1} \gu{1} \gnl
\glmptb \gnot{\hspace{-0,34cm}\rho_C} \grmp \gnl
\gob{1}{G} \gnl
\gend
\end{equation} &  & 
\begin{equation}\eqlabel{restr}
\gbeg{2}{4}
\got{1}{F} \gnl
\glmptb \gnot{\hspace{-0,34cm}\rho_M} \grmpb \gnl
\gcl{1} \gcu{1} \gnl
\gob{1}{G}  \gnl
\gend,\hspace{0,5cm}
\gbeg{2}{4}
\got{1}{F} \gnl
\gcl{1} \gu{1} \gnl
\glmptb \gnot{\hspace{-0,34cm}\rho_C} \grmp \gnl
\gob{1}{G} \gnl
\gend
\end{equation}
\end{tabular}
\end{center} 

A biwreath consists of a pair of 1-cells $B$ and $F$ in $\K$ where $B$ is a bimonad in $\K$ and
$F$ is a bimonad in the Eilenberg-Moore category $\bEM^M(\K)$ over the 0-cell $B$ therein. This means in particular that 
$F$ is a wreath (with 2-cells $\mu_M, \eta_M$), cowreath (with 2-cells $\Delta_C, \Epsilon_C$) -- we refer to the latter two as to ``straight structures'' --, 
mixed wreath (with 2-cells $\Delta_M, \Epsilon_M$), mixed cowreath (with 2-cells $\mu_C, \eta_C$) -- we refer to the latter two as to ``mixed structures'' -- all around $B$. 
Moreover, there is an additional 2-cell in $\bEM(\K)$ (given by 2-cells $\lambda_M, \lambda_C$ in $\K$) 
governing the ``monad-comonad'' compatibilities of $F$. 
We set the following notation:
$$
\gbeg{2}{3}
\got{1}{F} \got{1}{F} \gnl
\gmu \gnl
\gob{2}{F} \gnl
\gend:=
\gbeg{2}{4}
\got{1}{F} \got{1}{F} \gnl
\glmptb \gnot{\hspace{-0,34cm}\mu_M} \grmptb \gnl
\gcl{1} \gcu{1} \gnl
\gob{1}{F} \gnl
\gend \hspace{1,3cm}
\gbeg{1}{4}
\got{1}{} \gnl
\gu{1} \gnl
\gcl{1} \gnl
\gob{1}{F} \gnl
\gend:=
\gbeg{1}{4}
\got{1}{} \gnl
\glmpb \gnot{\hspace{-0,34cm}\eta_M} \grmpb \gnl
\gcl{1} \gcu{1} \gnl
\gob{1}{F} \gnl
\gend \hspace{1,3cm}
\gbeg{2}{3}
\got{2}{F} \gnl
\gcmu \gnl
\gob{1}{F} \gob{1}{F} \gnl
\gend:=
\gbeg{3}{4}
\got{3}{F} \gnl
\glmpb \gnot{\Delta_M} \gcmptb \grmpb \gnl
\gcl{1} \gcl{1} \gcu{1} \gnl
\gob{1}{F} \gob{1}{F} \gnl
\gend \hspace{1,3cm}
\gbeg{1}{3}
\got{1}{F} \gnl
\gcl{1} \gnl
\gcu{1} \gnl
\gob{1}{} \gnl
\gend:=
\gbeg{1}{3}
\got{1}{F} \gnl
\gbmp{\Epsilon_{M}} \gnl
\gcu{1} \gnl
\gob{1}{} \gnl
\gend \hspace{1,3cm}
\sigma:=
\gbeg{2}{4}
\got{1}{F} \got{1}{F} \gnl
\glmptb \gnot{\hspace{-0,34cm}\mu_M} \grmptb \gnl
\gcu{1} \gcl{1} \gnl
\gob{3}{B} \gnl
\gend 
$$
and similarly for the comonadic structures.

From the structure of a biwreath it follows that $F$ is a proper left $B$-module and comodule 
in $\K$. Moreover, via the (co)unit 2-cells in the Eilenberg-Moore 2-categories (``pre-(co)units'') and the distributive laws $\psi$ and $\phi$ coming from 
the wreath and the cowreath structure, we define left and right $F$-action and coaction on $B$ in a broader sense. Namely, the left $F$-(co)module structures on $B$ are 
twisted by a so-called 3-(co)cycle (coming from ``mixed structures''), while its right $F$-(co)module structures are twisted by a 2-(co)cycle (coming from ``straight structures''). 

From the multiplication-comultiplication compatibility governed by $\lambda$'s (we recall the one for $\lambda_M$ in \equref{8 lambda_M}, this is identity (50) in \cite{Femic5}) we get the following: 
when the ``mixed (co)multiplication 2-cells'' $\mu_C$ and $\Delta_M$ are canonical we have 
precise non-canonical forms of the ``straight (co)multiplication 2-cells'' $\mu_M$ and $\Delta_C$, respectively 
(see \equref{mu_M with sigma}, where we also assumed that $\lambda_M$ is canonical). 
Also the other way around: when the straight (co)multiplication 2-cells are canonical, we obtain non-canonical forms of the mixed (co)multiplication 2-cells. 
\vspace{-0,6cm}
\begin{center} 
\begin{tabular} {p{5.5cm}p{0cm}p{6cm}} 
\begin{equation}\eqlabel{8 lambda_M}
\gbeg{4}{6}
\gvac{2} \got{1}{F} \got{1}{F} \gnl
\gvac{2} \glmptb \gnot{\hspace{-0,34cm}\mu_M} \grmptb \gnl
\glmpb \gnot{\Delta_M} \gcmpb \grmptb \gcl{1} \gnl
\gcl{2} \gcl{2} \gmu \gnl
\gvac{2} \gcn{1}{1}{2}{2} \gnl
\gob{1}{F} \gob{1}{F} \gob{2}{B} \gnl
\gend=
\gbeg{5}{8}
\gvac{1} \got{1}{F} \got{1}{F} \gnl
\gvac{1} \gcl{1} \glmptb \gnot{\Delta_M} \gcmpb \grmpb \gnl
\glmpb \gnot{\lambda_M} \gcmptb \grmptb \gcl{1} \gcl{2} \gnl
\gcl{1} \gcl{1} \glmptb \gnot{\hspace{-0,34cm}\psi} \grmptb \gnl
\gcl{2} \glmptb \gnot{\hspace{-0,34cm}\mu_M} \grmptb \gmu \gnl
\gcl{1} \gcl{1} \gcl{1} \gcn{1}{1}{2}{1} \gnl
\gcl{1} \gcl{1}  \gmu \gnl
\gob{1}{F} \gob{1}{F} \gob{2}{B} \gnl
\gend
\end{equation} & & \vspace{0,8cm}
\begin{equation} \eqlabel{mu_M with sigma}
\mu_M=
\gbeg{3}{5}
\got{1}{F} \got{2}{F} \gnl
\gcl{1} \gcmu \gnl
\glmptb \gnot{\hspace{-0,34cm}\lambda_F} \grmptb \gcl{1} \gnl
\gcl{1} \glmpt \gnot{\hspace{-0,34cm}\sigma} \grmptb \gnl
\gob{1}{F} \gob{3}{B} \gnl
\gend\end{equation}
\end{tabular}
\end{center}
\vspace{-0,6cm}
Now, since we consider the (co)unit 2-cells in the Eilenberg-Moore 2-categories canonical, by the above definitions the left and right $F$-(co)module structures 
on $B$ must be trivial, and the 2- and 3-(co)cycles are trivial, too. 
This means that the proper biwreath will have only left $B$-(co)module structures on $F$. Moreover, since the 2--(co)cycles are trivial, if the ``mixed (co)multiplication 2-cells'' 
are canonical, so are the ``straight'' ones (see \equref{mu_M with sigma}), and vice versa. Consequently, when all the four latter structure 2-cells of $F$ are canonical, $F$ is a proper monad and comonad. 
If $\lambda$'s are canonical, too, $F$ is indeed a bimonad in $\K$. 
This follows from the $\Delta_M-\mu_M$ compatibility \equref{8 lambda_M}, which, in the case that all the 2-cells are canonical, becomes: 
$$
\gbeg{2}{4}
\got{1}{F} \got{1}{F} \gnl
\gmu \gnl
\gcmu  \gnl
\gob{1}{F}\gob{1}{F} 
\gend=
\gbeg{3}{5}
\got{1}{F} \got{2}{F} \gnl
\gcl{1} \gcmu \gnl
\glmptb \gnot{\hspace{-0,34cm}\lambda_F} \grmptb  \gcl{1} \gnl
\gcl{1} \gmu  \gnl
\gob{1}{F}  \gob{2}{F.} 
\gend
$$
Furthermore, the left monadic distributive law $\psi: BF\to FB$ and the left comonadic distributive law $\phi: FB\to BF$ (both with respect to $B$), turn out to be both monadic 
and comonadic (at an appropriate side) with respect to $F$. Besides, the pairs of 2-cells $(\psi,\lambda_M)$ and $(\phi,\lambda_C)$ satisfy Yang-Baxter type equations. 
Applying (co)unit of $B$ to the afore-mentioned (co)monadic distributive laws of $\psi$ and $\phi$ with respect to $F$, we obtained identities (63--72) in \cite{Femic5}, 
stating that $F$ is a left $B$-(co)module (co)monad in $\K$. 

It is an interesting question which biwreaths one can get if none of these (co)multiplication 2-cells is canonical, we have not investigated yet such examples of biwreaths. 

From the above said we may state: 

\begin{propdefn}
A biwreath $(B,F)$ in $\K$ for which the following holds: 
\begin{itemize}
\item either the mixed or the straight structures are canonical;
\item the canonical restrictions of monadic and comonadic components of the 2-cells coincide; 
\end{itemize}
consists of the following data: 
\begin{enumerate}
\item left bimonads $B$ and $F$ in $\K$;
\item a left monadic distributive law $\psi: BF\to FB$ and a left comonadic distributive law $\phi: FB\to BF$ (both with respect to $B$), which moreover are monadic  
and comonadic (at an appropriate side) with respect to $F$, such that the identities: 
$$
\gbeg{3}{5}
\got{1}{B} \got{1}{F} \got{1}{B} \gnl
\glmptb \gnot{\hspace{-0,34cm}\psi} \grmptb \gcl{1} \gnl
\gcl{1} \glmptb \gnot{\hspace{-0,34cm}\lambda_B} \grmptb \gnl
\glmptb \gnot{\hspace{-0,34cm}\phi} \grmptb \gcl{1} \gnl
\gob{1}{B} \gob{1}{F} \gob{1}{B}
\gend=
\gbeg{3}{5}
\got{1}{B} \got{1}{F} \got{1}{B} \gnl
\gcl{1} \glmptb \gnot{\hspace{-0,34cm}\phi} \grmptb \gnl
\glmptb \gnot{\hspace{-0,34cm}\lambda_B} \grmptb \gcl{1} \gnl
\gcl{1} \glmptb \gnot{\hspace{-0,34cm}\psi} \grmptb \gnl
\gob{1}{B} \gob{1}{F} \gob{1}{B}
\gend\hspace{1,4cm}
\gbeg{3}{5}
\got{1}{B} \got{1}{F} \got{1}{F} \gnl
\glmptb \gnot{\hspace{-0,34cm}\psi} \grmptb \gcl{1} \gnl
\gcl{1} \glmptb \gnot{\hspace{-0,34cm}\psi} \grmptb \gnl
\glmptb \gnot{\hspace{-0,34cm}\lambda_F} \grmptb \gcl{1} \gnl
\gob{1}{F} \gob{1}{F} \gob{1}{B}
\gend=
\gbeg{3}{5}
\got{1}{B} \got{1}{F} \got{1}{F} \gnl
\gcl{1} \glmptb \gnot{\hspace{-0,34cm}\lambda_F} \grmptb \gnl
\glmptb \gnot{\hspace{-0,34cm}\psi} \grmptb \gcl{1} \gnl
\gcl{1} \glmptb \gnot{\hspace{-0,34cm}\psi} \grmptb \gnl
\gob{1}{F} \gob{1}{F} \gob{1}{B}
\gend\hspace{1,4cm}
\gbeg{3}{5}
\got{1}{F} \got{1}{F} \got{1}{B} \gnl
\gcl{1} \glmptb \gnot{\hspace{-0,34cm}\phi} \grmptb \gnl
\glmptb \gnot{\hspace{-0,34cm}\phi} \grmptb \gcl{1} \gnl
\gcl{1} \glmptb \gnot{\hspace{-0,34cm}\lambda_F} \grmptb \gnl
\gob{1}{B} \gob{1}{F} \gob{1}{F}
\gend=
\gbeg{3}{5}
\got{1}{F} \got{1}{F} \got{1}{B} \gnl
\glmptb \gnot{\hspace{-0,34cm}\lambda_F} \grmptb \gcl{1} \gnl
\gcl{1} \glmptb \gnot{\hspace{-0,34cm}\phi} \grmptb \gnl
\glmptb \gnot{\hspace{-0,34cm}\phi} \grmptb \gcl{1} \gnl
\gob{1}{B} \gob{1}{F} \gob{1}{F}
\gend
$$
hold, where $\lambda_B:BB\to BB$ and $\lambda_F:FF\to FF$ are left monadic and comonadic distributive laws making $B$ and $F$ left bimonads. 
\end{enumerate}

In particular, $F$ is: 
\begin{itemize}
\item a left Yetter-Drinfel`d module;
\item a left $B$-module monad;
\item a left $B$-module comonad;
\item a left $B$-comodule monad and
\item a left $B$-comodule comonad 
\end{itemize}
in $\K$. 

A biwreath described above we will call {\em canonical biwreath}. 
\end{propdefn}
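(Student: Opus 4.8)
The plan is to assemble the structural facts gathered in the preceding discussion into the two itemized conclusions, and then to read off the five ``in particular'' consequences from \prref{distr-actions} together with the distributive-law axioms. That $B$ is a left bimonad is immediate: by definition a biwreath is a bimonad in $\bEM^M(\K)$ over the 0-cell $B$, and this 0-cell is by construction a bimonad in $\K$ carrying its own compatibility $\lambda_B$. The genuine content is therefore to show that, under the two hypotheses, $F$ is again a left bimonad, that the wreath and cowreath distributive laws $\psi$ and $\phi$ acquire the stated double (co)monadicity, and that the three displayed Yang-Baxter type equations hold.

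First I would settle the canonicity bookkeeping. Because the (co)unit 2-cells of the Eilenberg-Moore layer are taken canonical, the preceding analysis shows that the left and right $F$-(co)module structures on $B$ and all the 2- and 3-(co)cycles are trivial. With the cocycles trivial, the identity \equref{mu_M with sigma} degenerates, forcing the straight (co)multiplication 2-cells $\mu_M,\Delta_C$ to be canonical exactly when the mixed ones $\mu_C,\Delta_M$ are, and conversely. Thus the first hypothesis, that either the mixed or the straight structures are canonical, propagates to all four (co)multiplication 2-cells of $F$. The second hypothesis, that the canonical restrictions of the monadic and comonadic components coincide, is what glues $\lambda_M$ and $\lambda_C$ into a single 2-cell $\lambda_F:FF\to FF$; feeding canonical 2-cells into the multiplication-comultiplication compatibility \equref{8 lambda_M} (identity (50) of \cite{Femic5}) then collapses it precisely to the bimonad axiom of \deref{bimonad} for $F$ with compatibility $\lambda_F$. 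Hence $F$ is a left bimonad.

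Next I would treat the distributive laws and the Yang-Baxter equations. By construction $\psi:BF\to FB$ comes from the wreath (monad) structure and $\phi:FB\to BF$ from the cowreath (comonad) structure, so each is a left monadic, respectively comonadic, distributive law with respect to $B$ in the sense of \deref{distr}. Once $F$ has been recognised as a bimonad, one re-reads the biwreath axioms originally phrased over the 0-cell $B$ as statements over the 0-cell $F$: this is where $\psi$ and $\phi$ are seen to be also (co)monadic, on the appropriate side, with respect to $F$. The three displayed identities are then the Yang-Baxter/naturality compatibilities relating $(\psi,\lambda_B)$ and $(\phi,\lambda_C)$ to the bimonad structures: the first, over the string $BFB$, mixes $\psi$, $\lambda_B$ and $\phi$, while the second and third, over $BFF$ and $FFB$, express the compatibility of $\psi$, respectively $\phi$, with $\lambda_F$. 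I expect this to be the main obstacle: each identity must be extracted from the corresponding biwreath compatibility by substituting the now-canonical forms of the structure 2-cells and simplifying in the string calculus, keeping careful track of the left-right conventions of $\hat\C$ versus $\C$.

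Finally, the five ``in particular'' assertions follow formally. Applying \prref{distr-actions}(a),(b) to the bimonad $B$ and the distributive laws $\psi,\phi$ equips $F$ with a left $B$-module and a left $B$-comodule structure; the first Yang-Baxter equation over $BFB$ descends, upon applying the unit and counit of $B$, to the Yetter-Drinfel'd compatibility between this induced action and coaction, so $F$ is a left Yetter-Drinfel'd module. For the module and comodule (co)monad properties one applies the (co)unit of $B$ to the (co)monadic distributive laws of $\psi$ and $\phi$ with respect to $F$; this yields the identities (63)--(72) of \cite{Femic5}, which are precisely the statements that $F$ is a left $B$-module monad, a left $B$-module comonad, a left $B$-comodule monad and a left $B$-comodule comonad in $\K$. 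Naming such a biwreath a \emph{canonical biwreath} completes the proof.
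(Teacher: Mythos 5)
Your proposal is correct and takes essentially the same route as the paper: the statement there is presented as a summary (``From the above said we may state'') of precisely the chain you reconstruct --- triviality of the 2- and 3-(co)cycles and of the $F$-(co)module structures on $B$ makes the straight and mixed (co)multiplication 2-cells canonical simultaneously via \equref{mu_M with sigma}, the compatibility \equref{8 lambda_M} with all 2-cells canonical collapses to the bimonad axiom for $F$, the double (co)monadicity of $\psi,\phi$ and the Yang--Baxter identities come from substituting the canonical forms into the biwreath axioms of \cite{Femic5}, and the ``in particular'' items follow by applying (co)units of $B$, i.e.\ identities (63)--(72) of \cite{Femic5}. Your bookkeeping of which hypothesis does what (the first propagating canonicity, the second gluing $\lambda_M$ and $\lambda_C$ into $\lambda_F$) matches the paper's intent, so there is nothing essential to add.
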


The novel notions in a 2-category listed at the end of the above Definition came out as a natural consequence of the notion of a biwreath.

\begin{ex} \exlabel{Example 1}
In \cite[Section 5.1]{Femic5} we proved that a canonical biwreath in $\K=\hat\C$ on a pair of bimonads $B,F$ in $\K$ is given by the 2-cells, {\em i.e.} morphisms in $\C$: 
\begin{equation} \eqlabel{lambda Radford}
\psi=
\gbeg{3}{5}
\got{2}{B} \got{1}{F} \gnl
\gcmu \gcl{1} \gnl
\gcl{1} \gbr \gnl
\glm \gcl{1} \gnl
\gob{1}{} \gob{1}{F} \gob{1}{B} 
\gend 
\hspace{1cm}
\phi= 
\gbeg{3}{5}
\got{1}{} \got{1}{F} \got{1}{B} \gnl
\glcm \gcl{1} \gnl
\gcl{1} \gbr \gnl
\gmu \gcl{1} \gnl
\gob{2}{B} \gob{1}{F} 
\gend
\hspace{1cm}
\lambda_{Rad}=
\gbeg{4}{7}
\got{1}{} \got{1}{F} \got{3}{F} \gnl
\gwcm{3} \gcl{2} \gnl
\gcl{1} \glcm \gnl
\gcl{1} \gcl{1} \gbr \gnl
\gcl{1} \glm \gcl{2} \gnl
\gwmu{3} \gnl
\gvac{1} \gob{1}{F} \gob{3}{F.}
\gend
\end{equation}
This canonical biwreath recovers the Radford biproduct $B\ot F$ in $\C$. 
\end{ex}

As we exposed above, given that in a biwreath the rest of (co)module structures between $B$ and $F$ are trivial, as well as the 2- and 3-(co)cycles, 
we introduced biwreath-like objects (consisting of straight structures and an appropriate 2-cell $\lambda_F$) and 
mixed biwreath-like objects (consisting of mixed structures and an appropriate 2-cell $\lambda_F$) in \cite{Femic5}. In these new objects the above-mentioned structures are non-trivial. 
We recall here the definition of a biwreath-like object ($\EM^M(\K)$ and $\EM^C(\K)$ denote the Eilenberg-Moore categories for monads and comonads, respectively): 

\begin{defn} \cite[Definition 6.1]{Femic5} \delabel{bl object}
A biwreath-like object in $\K$ is a monad $(F, \mu_M, \eta_M)$ in $\EM^M(\K)$ and a comonad $(F, \Delta_C, \Epsilon_C)$ in $\EM^C(\K)$ over the same bimonad $B$ in $\K$ 
with the canonical restrictions: 
\begin{equation} \eqlabel{mon comon cell}
\gbeg{2}{3}
\got{1}{F} \got{1}{F} \gnl
\gmu \gnl
\gob{2}{F} \gnl
\gend:=
\gbeg{2}{4}
\got{1}{F} \got{1}{F} \gnl
\glmptb \gnot{\hspace{-0,34cm}\mu_M} \grmptb \gnl
\gcl{1} \gcu{1} \gnl
\gob{1}{F} \gnl
\gend \hspace{2cm}
\gbeg{1}{4}
\got{1}{} \gnl
\gu{1} \gnl
\gcl{1} \gnl
\gob{1}{F} \gnl
\gend:=
\gbeg{1}{4}
\got{1}{} \gnl
\glmpb \gnot{\hspace{-0,34cm}\eta_M} \grmpb \gnl
\gcl{1} \gcu{1} \gnl
\gob{1}{F} \gnl
\gend \hspace{2cm}
\gbeg{2}{3}
\got{2}{F} \gnl
\gcmu \gnl
\gob{1}{F} \gob{1}{F} \gnl
\gend:=
\gbeg{2}{4}
\got{1}{F} \gnl
\gcl{1} \gu{1} \gnl
\glmptb \gnot{\hspace{-0,34cm}\Delta_C} \grmptb \gnl
\gob{1}{F} \gob{1}{F} \gnl
\gend \hspace{2cm}
\gbeg{1}{3}
\got{1}{F} \gnl
\gcl{1} \gnl
\gcu{1} \gnl
\gob{1}{} \gnl
\gend:=
\gbeg{1}{3}
\got{1}{F} \gnl
\gcl{1} \gu{1} \gnl
\glmpt \gnot{\hspace{-0,34cm}\Epsilon_C} \grmpt \gnl
\gob{1}{} \gnl
\gend
\end{equation}
equipped with a left monadic and comonadic distributive law $\lambda_F:FF\to FF$ with respect to the structure 2-cells \equref{mon comon cell} so that 
the following compatibility conditions are fulfilled: 
\begin{equation} \eqlabel{lambda mixed 1-3}
\gbeg{2}{3}
\got{1}{F} \got{1}{F} \gnl
\gcl{1} \gcl{1} \gnl
\gcu{1} \gcu{1} \gnl
\gend=
\gbeg{2}{3}
\got{1}{F} \got{1}{F} \gnl
\gmu \gnl
\gvac{1} \hspace{-0,22cm} \gcu{1} \gnl
\gend \hspace{2cm}
\gbeg{1}{4}
\got{2}{}  \gnl
 \gu{1} \gnl
\hspace{-0,34cm} \gcmu \gnl
\gob{1}{F} \gob{1}{F} \gnl
\gend=
\gbeg{2}{4}
\got{1}{} \gnl
\gu{1} \gu{1} \gnl
\gcl{1} \gcl{1} \gnl
\gob{1}{F} \gob{1}{F}
\gend  \hspace{2cm}
\gbeg{1}{2}
\gu{1} \gnl
\gcu{1} \gnl
\gob{2}{} \gnl
\gend=
\Id_{id_{\A}}
\end{equation}
and
\begin{center} \hspace{-0,2cm}
\begin{tabular}{p{5.6cm}p{0cm}p{6cm}}
\begin{equation} \eqlabel{lambda_M 8F}
\gbeg{2}{5}
\got{1}{F} \got{1}{F} \gnl
\gcl{1} \gcl{1} \gnl
\glmptb \gnot{\hspace{-0,34cm}\mu_M} \grmptb \gnl
\hspace{-0,22cm} \gcmu \gcn{1}{1}{0}{1} \gnl
\gob{1}{F} \gob{1}{F} \gob{1}{B} \gnl
\gend=
\gbeg{3}{5}
\got{1}{F} \got{2}{F} \gnl
\gcl{1} \gcmu \gnl
\glmptb \gnot{\hspace{-0,34cm}\lambda_F} \grmptb \gcl{1} \gnl
\gcl{1} \glmptb \gnot{\hspace{-0,34cm}\mu_M} \grmptb \gnl
\gob{1}{F} \gob{1}{F} \gob{1}{B} \gnl
\gend
\end{equation} & & 
\begin{equation}\eqlabel{lambda_C 8F}
\gbeg{3}{4}
\got{1}{F} \got{1}{F} \got{1}{B} \gnl
\gmu \gcn{1}{1}{1}{0} \gnl
\gvac{1} \hspace{-0,34cm} \glmptb \gnot{\hspace{-0,34cm}\Delta_C} \grmptb \gnl
\gvac{1} \gob{1}{F} \gob{1}{F} \gnl
\gend=
\gbeg{3}{5}
\got{1}{F} \got{1}{F} \got{1}{B} \gnl
\gcl{1} \glmptb \gnot{\hspace{-0,34cm}\Delta_C} \grmptb \gnl
\glmptb \gnot{\hspace{-0,34cm}\lambda_F} \grmptb \gcl{1} \gnl
\gcl{1} \gmu \gnl
\gob{1}{F} \gob{2}{F.} \gnl
\gend
\end{equation}
\end{tabular}
\end{center}
\end{defn}

Observe that the identity \equref{lambda_M 8F} can be obtained from \equref{8 lambda_M} assuming that $\Delta_M, \lambda_M$ are canonical.

Applying $\Epsilon_B$ to \equref{lambda_M 8F} (or applying $\eta_B$ to \equref{lambda_C 8F}), one gets \equref{bimonad in K F}, 
then $F$ is a bimonad in $\K$ with possibly non-(co)associative (co)multiplication. On the other hand, applying 
$F
\gbeg{1}{2}
\got{1}{F}\gnl
\gcu{1} \gnl
\gend B$ to \equref{lambda_M 8F} and 
$F
\gbeg{1}{2}
\gu{1} \gnl
\gob{1}{F}\gnl
\gend B$ to \equref{lambda_C 8F}, we obtain $\mu_M$ and $\Delta_C$ below:  \vspace{-0,7cm}
\begin{center} \hspace{-1,3cm} 
\begin{tabular}{p{4cm}p{0cm}p{4cm}p{0cm}p{4cm}}
\begin{equation} \eqlabel{bimonad in K F}
\gbeg{2}{4}
\got{1}{F} \got{1}{F} \gnl
\gmu \gnl
\gcmu \gnl
\gob{1}{F}\gob{1}{F} 
\gend=
\gbeg{3}{5}
\got{1}{F} \got{2}{F} \gnl
\gcl{1} \gcmu \gnl
\glmptb \gnot{\hspace{-0,34cm}\lambda_F} \grmptb  \gcl{1} \gnl
\gcl{1} \gmu \gnl
\gob{1}{F}  \gob{2}{F}
\gend
\end{equation} & & 
\begin{equation} \eqlabel{mu_M bl}
\mu_M=
\gbeg{3}{5}
\got{1}{F} \got{2}{F} \gnl
\gcl{1} \gcmu \gnl
\glmptb \gnot{\hspace{-0,34cm}\lambda_F} \grmptb \gcl{1} \gnl
\gcl{1} \glmpt \gnot{\hspace{-0,34cm}\sigma} \grmptb \gnl
\gob{1}{F} \gob{3}{B} \gnl
\gend
\end{equation} & & 
\begin{equation} \eqlabel{Delta_C bl}
\Delta_C=
\gbeg{3}{5}
\got{1}{F} \got{3}{B} \gnl
\gcl{1} \glmpb \gnot{\hspace{-0,34cm}\rho} \grmptb \gnl
\glmptb \gnot{\hspace{-0,34cm}\lambda_F} \grmptb \gcl{1} \gnl
\gcl{1} \gmu \gnl \gnl
\gob{1}{F} \gob{2}{F} \gnl
\gend
\end{equation}
\end{tabular}
\end{center}
where
\begin{center} \hspace{1,2cm}
\begin{tabular}{p{4cm}p{1cm}p{4cm}}
\gbeg{2}{4}
\got{1}{F} \got{1}{F} \gnl
\glmpt \gnot{\hspace{-0,34cm}\sigma} \grmptb \gnl
\gvac{1} \gcl{1} \gnl
\gob{3}{B}
\gend:=
\gbeg{2}{4}
\got{1}{F} \got{1}{F} \gnl
\glmptb \gnot{\hspace{-0,34cm}\mu_M} \grmptb \gnl
\gcu{1} \gcl{1} \gnl
\gob{3}{B}
\gend
& &
\gbeg{2}{4}
\got{3}{B} \gnl
\gvac{1} \gcl{1} \gnl
\glmpb \gnot{\hspace{-0,34cm}\rho} \grmptb \gnl
\gob{1}{F}\gob{1}{F}
\gend:=
\gbeg{2}{4}
\got{3}{B} \gnl
\gu{1} \gcl{1} \gnl
\glmptb \gnot{\hspace{-0,34cm}\Delta_C} \grmptb \gnl
\gob{1}{F}\gob{1}{F.}
\gend
\end{tabular}
\end{center}

\medskip

Now the following is obvious:

\begin{cor}
A canonical biwreath is a biwreath-like object.
\end{cor}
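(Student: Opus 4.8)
The plan is to read the required biwreath-like structure straight off the data of a canonical biwreath, checking that each ingredient demanded by \deref{bl object} is already present and that its compatibility axioms collapse to identities recorded above. First I would recall that, by the very definition of a biwreath, $F$ is a bimonad in $\bEM^M(\K)$ over the bimonad $B$; in particular $F$ carries its ``straight structures'', namely a wreath $(\mu_M,\eta_M)$ — a monad in $\EM^M(\K)$ — and a cowreath $(\Delta_C,\Epsilon_C)$ — a comonad in $\EM^C(\K)$ — both over $B$. These are exactly the monad in $\EM^M(\K)$ and comonad in $\EM^C(\K)$ over the same bimonad $B$ required by \deref{bl object}.

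Next I would confirm the canonical restrictions \equref{mon comon cell}. In a canonical biwreath the pre-(co)units are canonical, so the left and right $F$-(co)module structures on $B$ are trivial and the $2$- and $3$-(co)cycles vanish; by \equref{mu_M with sigma} this forces the mixed and straight (co)multiplications of $F$ to be simultaneously canonical. Consequently $\mu_M,\eta_M,\Delta_C,\Epsilon_C$ take precisely the canonical form \equref{mon comon cell}, and the distributive law $\lambda_F$ supplied by the biwreath is the sought left monadic and comonadic distributive law with respect to these 2-cells.

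It then remains to verify the three compatibility conditions. The identities \equref{lambda mixed 1-3} are the counit--multiplication, unit--comultiplication and unit--counit axioms of \deref{bimonad}; since in a canonical biwreath $F$ is a (proper) left bimonad in $\K$, they hold. For \equref{lambda_M 8F} I would invoke the observation already recorded just after \deref{bl object}: it is obtained from the $\Delta_M$--$\mu_M$ compatibility \equref{8 lambda_M} of the biwreath upon taking $\Delta_M$ and $\lambda_M$ canonical, which is exactly the situation of a canonical biwreath. The dual identity \equref{lambda_C 8F} follows symmetrically from the $\lambda_C$-version of \equref{8 lambda_M}, with the mixed comultiplication $\mu_C$ and $\lambda_C$ taken canonical.

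The only delicate point — which is really bookkeeping rather than computation — is the second and third steps: one must be certain that the canonicity hypotheses defining a canonical biwreath, together with the vanishing of the cocycles, genuinely push all four straight/mixed (co)multiplications into canonical form, so that \equref{8 lambda_M} and its co-version specialize \emph{verbatim} to \equref{lambda_M 8F} and \equref{lambda_C 8F}. Once this alignment of canonical restrictions is in place the Corollary is immediate, which is precisely why it can be stated as ``obvious'': the biwreath-like object is nothing but the restriction of the canonical biwreath to its straight structures $(\mu_M,\eta_M,\Delta_C,\Epsilon_C)$ together with $\lambda_F$.
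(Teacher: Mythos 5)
Your proposal is correct and follows essentially the same route as the paper, which states the corollary as "obvious" precisely on the strength of the observations you spell out: the straight structures $(\mu_M,\eta_M,\Delta_C,\Epsilon_C)$ of the canonical biwreath supply the monad in $\EM^M(\K)$ and comonad in $\EM^C(\K)$ with canonical restrictions, the bimonad structure of $F$ in $\K$ gives \equref{lambda mixed 1-3}, and \equref{lambda_M 8F} (resp.\ \equref{lambda_C 8F}) is the specialization of \equref{8 lambda_M} (resp.\ its $\lambda_C$-counterpart) under canonicity, exactly as noted after \deref{bl object}. The only nit is terminological: $\mu_C$ is the mixed-cowreath multiplication rather than a "mixed comultiplication", but this does not affect the argument.
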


\begin{ex} \exlabel{Example 2}
In \cite[Section 6.1]{Femic5} we proved that in $\K=\hat\C$ a biwreath-like object on a pair of bimonads $B,F$ in $\K$ is given by the 2-cells, {\em i.e.} morphisms in $\C$: \vspace{-0,7cm} 
$$\psi=
\gbeg{3}{5}
\got{1}{B} \got{2}{F} \gnl
\gcl{1} \gcmu \gnl
\gbr \gcl{1} \gnl
\gcl{1} \grm \gnl
\gob{1}{F} \gob{1}{B} 
\gend
\hspace{1cm}
\phi= 
\gbeg{3}{5}
\got{1}{F} \got{1}{B} \gnl
\gcl{1} \grcm \gnl
\gbr \gcl{1} \gnl
\gcl{1} \gmu \gnl
\gob{1}{B} \gob{2}{F} 
\gend
\hspace{1cm}
\lambda=
\gbeg{3}{5}
\got{2}{F} \got{1}{F} \gnl
\gcmu \gcl{1} \gnl  
\gcl{1} \gbr  \gnl
\gmu \gcl{1} \gnl
\gob{2}{F} \gob{1}{F}
\gend \hspace{0,8cm} \Rightarrow \hspace{0,8cm}
\mu_M=
\gbeg{3}{5}
\got{2}{F} \got{2}{F} \gnl
\gcmu  \gcmu \gnl
\gcl{1} \gbr \gcl{1} \gnl
\gmu \glmpt \gnot{\hspace{-0,34cm}\sigma} \grmptb  \gnl
\gob{2}{F} \gob{3}{B} 
\gend
$$For $\C=R\x\Mod$, the category of modules over a commutative ring $R$, we recover Sweedler's normalized 2-cocycle $\sigma: F\ot F\to B$, twisted $F$-action on $B$ (``$F$ measures $B$'') 
and Sweedler's crossed product algebra $F\ot B$. 
\end{ex}

\subsection{$\tau$-bimonads}

In \deref{bimonad} we defined left bimonads in $\K$ - the definition involves left monadic and comonadic distributive law $\lambda$. A right bimonad in $\K$ is defined in the 
obvious way. 

\begin{defn} \delabel{tau-bim}
A monad and a comonad $B$ in $\K$ we call a {\em $\tau$-bimonad} if there is a 2-cell $\tau_{B,B}:BB\to BB$ which is a left and right monadic and comonadic distributive law 
such that the following compatibility conditions hold:
$$\gbeg{2}{3}
\got{1}{B} \got{1}{B} \gnl
\gcl{1} \gcl{1} \gnl
\gcu{1}  \gcu{1} \gnl
\gend=
\gbeg{2}{3}
\got{1}{B} \got{1}{B} \gnl
\gmu \gnl
\gvac{1} \hspace{-0,2cm} \gcu{1} \gnl
\gob{1}{}
\gend \hspace{1,6cm}
\gbeg{2}{3}
\gu{1}  \gu{1} \gnl
\gcl{1} \gcl{1} \gnl
\gob{1}{B} \gob{1}{B}
\gend=
\gbeg{2}{3}
\gu{1} \gnl
\hspace{-0,34cm} \gcmu \gnl
\gob{1}{B} \gob{1}{B}
\gend \hspace{1,6cm}
\gbeg{1}{2}
\gu{1} \gnl
\gcu{1} \gnl
\gob{1}{}
\gend=
\Id_{id_{\A}}  \hspace{1,6cm}
\gbeg{3}{5}
\got{1}{B} \got{3}{B} \gnl
\gwmu{3} \gnl
\gvac{1} \gcl{1} \gnl
\gwcm{3} \gnl
\gob{1}{B}\gvac{1}\gob{1}{B}
\gend=
\gbeg{2}{5}
\got{2}{B} \got{2}{B} \gnl
\gcmu \gcmu \gnl
\gcl{1} \glmptb \gnot{\hspace{-0,34cm}\tau_{B,B}} \grmptb \gcl{1} \gnl
\gmu \gmu \gnl
\gob{2}{B} \gob{2}{B.} \gnl
\gend
$$
\end{defn}

It is straightforward to show that a $\tau$-bimonad $B$ is both a left and a right bimonad with the corresponding 2-cells $\lambda_l$ and $\lambda_r$ being given by: 
$$\lambda_l=
\gbeg{4}{5}
\gvac{1} \got{1}{B} \got{3}{B} \gnl
\gwcm{3} \gcl{1} \gnl
\gcl{1} \gvac{1} \glmptb \gnot{\hspace{-0,34cm}\tau_{B,B}} \grmptb \gnl
\gwmu{3} \gcl{1} \gnl
\gvac{1} \gob{1}{B} \gvac{1} \gob{2}{\hspace{-0,34cm}B}
\gend \hspace{1cm}\textnormal{and}\hspace{1cm}
\lambda_r=
\gbeg{4}{5}
\got{1}{B} \got{3}{B} \gnl
\gcl{1} \gwcm{3} \gnl
\glmptb \gnot{\hspace{-0,34cm}\tau_{B,B}} \grmptb \gvac{1} \gcl{1} \gnl
\gcl{1} \gwmu{3} \gnl
\gob{1}{B} \gvac{1} \gob{2}{\hspace{-0,34cm}B.}
\gend 
$$
respectively.

\subsection{When a biwreath-like object is a $\tau$-bimonad?} \sslabel{when is bimonad}

In a biwreath-like object $(B,F)$ the 1-cell $FB$ is a monad and a comonad by the wreath product and the cowreath coproduct structures, these are given by:
\begin{equation}  \eqlabel{wreath (co)product - old}
\nabla_{FB}=
\gbeg{3}{5}
\got{1}{F} \got{1}{B} \got{1}{F} \got{3}{B}  \gnl
\gcl{1}  \glmptb \gnot{\hspace{-0,34cm}\psi} \grmptb \gvac{1} \gcl{1} \gnl
\glmptb \gnot{\hspace{-0,34cm}\mu_M} \grmptb \gwmu{3} \gnl
\gcl{1} \gwmu{3} \gnl
\gob{1}{F} \gvac{1} \gob{1}{B}
\gend\hspace{1,5cm}
\eta_{FB}=
\gbeg{2}{3}
\glmpb \gnot{\hspace{-0,34cm}\eta_M} \grmpb \gnl
\gcl{1} \gcl{1} \gnl
\gob{1}{F} \gob{1}{B}
\gend; \hspace{1,5cm}
\Delta_{FB}=
\gbeg{3}{5}
\got{1}{F} \got{3}{B} \gnl
\gcl{1} \gwcm{3} \gnl
\glmptb \gnot{\hspace{-0,34cm}\Delta_C} \grmptb \gwcm{3} \gnl
\gcl{1}  \glmptb \gnot{\hspace{-0,34cm}\phi} \grmptb \gvac{1} \gcl{1} \gnl
\gob{1}{F} \gob{1}{B} \got{1}{F} \got{3}{B}
\gend\hspace{1,5cm}
\Epsilon_{FB}=
\gbeg{2}{4}
\got{1}{F} \got{1}{B} \gnl
\gcl{1} \gcl{1} \gnl
\glmpt \gnot{\hspace{-0,34cm}\Epsilon_C} \grmpt \gnl
\gend
\end{equation}
(see \cite{LS}). Let $\tau_{B,F}, \tau_{F,F}, \tau_{F,F}, \tau_{F,B}$ be left and right monadic and comonadic distributive laws. Here the term ``(co)monadic'' with respect to $F$ 
is meant with respect to the canonical restrictions of the structure 2-cells of $F$, recall \equref{mon comon cell}.  
It is straightforward to show that 
\begin{equation}  \eqlabel{tau FB}
\tau_{FB,FB}:=
\gbeg{3}{5}
\got{1}{F} \got{1}{B} \got{1}{F} \got{1}{B} \gnl
\gcl{1} \glmptb \gnot{\hspace{-0,34cm}\tau_{B,F}} \grmptb  \gcl{1} \gnl
\glmptb \gnot{\hspace{-0,34cm}\tau_{F,F}} \grmptb  \glmptb \gnot{\hspace{-0,34cm}\tau_{B,B}} \grmptb  \gnl
\gcl{1} \glmptb \gnot{\hspace{-0,34cm}\tau_{F,B}} \grmptb  \gcl{1} \gnl
\gob{1}{F} \gob{1}{B} \gob{1}{F} \gob{1}{B}
\gend
\end{equation}
is a left and right monadic and comonadic distributive law for $FB$. 

In \cite[Theorem 5.3]{Femic5} we proved that in a canonical biwreath in $\K=\hat\C$ (with suitable $\psi, \phi$ and $\lambda_F$) $F\ot B$ is a Radford biproduct 
if and only if $F\ot B$ is a bialgebra in $\C$. For the rest of biwreaths and for biwreath-like objects we could not prove that the respective 1-cells $FB$ are bimonads. 
In the next section we introduce another concept, which is an alternative definition of a biwreath-like object and it has a structure of a bimonad in $\K$.

\section{Paired wreaths}  \selabel{Paired}

To the (mixed) (co)wreaths that we were dealing with in \cite{Femic5} and were speaking about in \seref{bw vs bl}, we refer to as to {\em left} ones. 
Before we proceed, let us list the notation of the structure 2-cells that we are using for left and right (co)wreaths. 

\medskip

In a left wreath $F$ around a monad $B$ in $\K$ we have 2-cells $\psi: BF\to FB, \mu_M:FF\to FB, \eta_M:\Id_{\A}\to FB$ so that $\psi$ is a left monadic distributive law and 
further 5 axioms are fulfilled (all these will be detailed later on). Then $F$ is a right wreath around a monad $B$ in $\K$ if it is equipped with 2-cells 
$\psi': FB\to BF, \mu_M':FF\to BF, \eta_M':\Id_{\A}\to BF$ so that $\psi'$ is a right monadic distributive law and further 5 axioms, left-right symmetric to the latter 5 axioms, are fulfilled. 

A left cowreath $F$ around a comonad $B$ in $\K$ is given by 2-cells $\phi: FB\to BF, \Delta_C: FB\to FF, \Epsilon_C: FB\to\Id_{\A}$ where $\phi$ is a left comonadic 
distributive law and further 5 axioms are fulfilled. These axioms are up-down symmetric to the axioms of a left wreath around a monad. Lastly, a right 
cowreath $F$ around a comonad $B$ in $\K$ is given by 2-cells $\phi': BF\to FB, \Delta_C': BF\to FF, \Epsilon_C': BF\to\Id_{\A}$ so that $\phi'$ is right comonadic distributive law 
and 5 further axioms are fulfilled, which are left-right symmetric those of a left cowreath. 

\medskip

Our next objective is to study biwreath-like structures in $\K$ that would cover the most general family of the known crossed products and simultaneously are such that the 
composit 1-cell $FB$ is a bimonad in $\K$. For the latter, we will consider $FB$ a monad as in \equref{wreath (co)product - old}, that is, $F$ is a left wreath around $B$, 
but for the comonad structure we take the only resting possibility different than in \equref{wreath (co)product - old}: that $B$ is a right cowreath around $F$. 
Notice that this means that the comonadic diagrams we had in a biwreath-like object will turn into their left-right 
symmetric versions and moreover the r\^oles of $B$ and $F$ will be interchanged. Thus, for example, the 2-cell $\Delta_C: FB\to FF$ will turn into $\Delta_C': FB\to BB$, and so on. 
This type of symmetry we will call {\em $\alpha$-symmetry} (we will use this term again from \prref{psi_4} on). The symmetry obtained by composing up-down symmetry (rotating 
string diagrams by $\pi$) with the $\alpha$-symmetry we will call {\em $\pi$-symmetry}, in accordance with \cite{BD}. 

As for generalizing the known crossed products, in particular, for $\K=\hat\C$ we want to cover most general forms of the morphisms $\psi, \phi, \lambda$ of those studied in 
\exref{Example 1}, \exref{Example 2}. 
In order to prove in \cite[Lemma 5.2]{Femic5} that $\lambda_{Rad}$ from \equref{lambda Radford} satisfies the necessary left monadic and comonadic distributive laws,  
we needed to assume that $\mu_M$ is canonical or that $F$ is a trivial left $B$-module,  
and that $F$ is a proper left $B$-comodule. In a general biwreath-like structure none of the latter three conditions need be satisfied. For this reason we will drop the 
condition that $\lambda$ should be a proper distributive law from the coming definition. Moreover, the condition \equref{8 lambda_M} on $\lambda$ 
we will substitute by its consequence \equref{mu_M bl} and another identity, which on one hand will yield \equref{bimonad in K F}, but also a concrete form of the 2-cell $\lambda$, on the other hand. 
Observe that the following definition is auto $\pi$-symmetric. 

\begin{defn} \delabel{paired bl object} 
A paired wreath in $\K$ consists of the following data: 
\begin{enumerate}
\item a monad $B$ with a 2-cell $\Epsilon_B=
\gbeg{1}{2}
\got{1}{B} \gnl
\gcu{1} \gnl
\gend$ in $\K$ and a comonad $F$ with a 2-cell $\eta_F=
\gbeg{1}{2}
\gu{1} \gnl
\gob{1}{F} \gnl
\gend$ in $\K$, both over the same 0-cell $\A$ in $\K$, satisfying the compatibility conditions:
\begin{equation} \eqlabel{(co)unital}
\gbeg{2}{3}
\got{1}{B} \got{1}{B} \gnl
\gmu \gnl
\gvac{1} \hspace{-0,22cm} \gcu{1} \gnl
\gend=
\gbeg{2}{3}
\got{1}{B} \got{1}{B} \gnl
\gcl{1} \gcl{1} \gnl
\gcu{1} \gcu{1} \gnl
\gend \hspace{1,5cm}
\gbeg{1}{2}
\gu{1} \gnl
\gcu{1} \gnl
\gob{2}{} \gnl
\gend B=
\Id_{id_{\A}}
\hspace{2cm}
\gbeg{1}{3}
 \gu{1} \gnl
\hspace{-0,34cm} \gcmu \gnl
\gob{1}{F} \gob{1}{F} \gnl
\gend=
\gbeg{2}{3}
\gu{1} \gu{1} \gnl
\gcl{1} \gcl{1} \gnl
\gob{1}{F} \gob{1}{F}
\gend  \hspace{1,5cm}
\gbeg{1}{2}
\gu{1} \gnl
\gcu{1} \gnl
\gob{2}{} \gnl
\gend F=
\Id_{id_{\A}}
\end{equation}
\item so that $(F, \psi: BF\to FB, \mu_M, \eta_M)$ is a left wreath around $B$ and $(B, \phi': FB\to BF, \Delta_C', \Epsilon_C')$ is a right cowreath around $F$  
with the canonical restrictions: 
\begin{equation} \eqlabel{pre-(co)mult}
\gbeg{2}{3}
\got{1}{F} \got{1}{F} \gnl
\gmu \gnl
\gob{2}{F} \gnl
\gend:=
\gbeg{2}{4}
\got{1}{F} \got{1}{F} \gnl
\glmptb \gnot{\hspace{-0,34cm}\mu_M} \grmptb \gnl
\gcl{1} \gcu{1} \gnl
\gob{1}{F} \gnl
\gend \hspace{2cm}
\gbeg{1}{4}
\got{1}{} \gnl
\gu{1} \gnl
\gcl{1} \gnl
\gob{1}{F} \gnl
\gend=
\gbeg{1}{4}
\got{1}{} \gnl
\glmpb \gnot{\hspace{-0,34cm}\eta_M} \grmpb \gnl
\gcl{1} \gcu{1} \gnl
\gob{1}{F} \gnl
\gend \hspace{2cm}
\gbeg{2}{3}
\got{2}{B} \gnl
\gcmu \gnl
\gob{1}{B} \gob{1}{B} \gnl
\gend:=
\gbeg{2}{4}
\got{3}{B} \gnl
\gu{1} \gcl{1} \gnl
\glmptb \gnot{\hspace{-0,34cm}\Delta_C'} \grmptb \gnl
\gob{1}{B} \gob{1}{B} \gnl
\gend \hspace{2cm}
\gbeg{1}{3}
\got{1}{B} \gnl
\gcl{1} \gnl
\gcu{1} \gnl
\gob{1}{} \gnl
\gend=
\gbeg{1}{3}
\got{3}{B} \gnl
\gu{1} \gcl{1} \gnl
\glmpt \gnot{\hspace{-0,34cm}\Epsilon_C'} \grmpt \gnl
\gob{1}{} \gnl
\gend
\end{equation}
where $\gbeg{1}{2}
\got{1}{B} \gnl
\gcu{1} \gnl
\gend$ and $\gbeg{1}{2}
\gu{1} \gnl
\gob{1}{F} \gnl
\gend$ are from the point 1. The above four 2-cells we will call {\em pre-multiplication} and {\em pre-unit} on $F$ and {\em pre-comultiplication} and {\em pre-counit} on $B$. 
We set the following notations: 
\vspace{-0,6cm}
\begin{center} 
\begin{tabular}{p{6cm}p{0cm}p{6cm}}
\begin{equation} \eqlabel{F left B-mod}
\gbeg{2}{3}
\got{1}{B} \got{1}{F} \gnl
\glm \gnl
\gob{3}{F}
\gend:=
\gbeg{2}{4}
\got{1}{B} \got{1}{F} \gnl
\glmptb \gnot{\hspace{-0,34cm}\psi} \grmptb \gnl
\gcl{1} \gcu{1} \gnl
\gob{1}{F}
\gend
\end{equation} & &
\begin{equation} \eqlabel{B right F-comod}
\gbeg{2}{3}
\got{1}{B} \gnl
\grcm \gnl
\gob{1}{B} \gob{1}{F}
\gend:=
\gbeg{2}{4}
\got{3}{B} \gnl
\gu{1} \gcl{1} \gnl
\glmptb \gnot{\hspace{-0,34cm}\phi'} \grmptb \gnl
\gob{1}{B} \gob{1}{F}
\gend
\end{equation}
\end{tabular}
\end{center} \vspace{-0,6cm}

 \vspace{-0,6cm}
\begin{center} 
\begin{tabular}{p{6cm}p{0cm}p{6cm}}  
\begin{equation} \eqlabel{right F-mod}
\gbeg{2}{3}
\got{1}{B} \got{1}{F} \gnl
\grmo \gcl{1} \gnl 
\gob{1}{B}
\gend:=
\gbeg{2}{4}
\got{1}{B} \got{1}{F} \gnl
\glmptb \gnot{\hspace{-0,34cm}\psi} \grmptb \gnl
\gcu{1} \gcl{1} \gnl
\gob{3}{B} 
\gend
\end{equation} & &
\begin{equation} \eqlabel{left B-comod}
\gbeg{2}{3}
\got{3}{F} \gnl
\grmo \gvac{1} \gcl{1} \gnl
\gob{1}{B} \gob{1}{F}
\gend:=
\gbeg{2}{4}
\got{1}{F} \gnl
\gcl{1} \gu{1} \gnl
\glmptb \gnot{\hspace{-0,34cm}\phi'} \grmptb \gnl
\gob{1}{B} \gob{1}{F}
\gend
\end{equation} 
\end{tabular}
\end{center}

\begin{center} \hspace{1,2cm}
\begin{tabular}{p{4cm}p{1cm}p{4cm}}
\gbeg{2}{4}
\got{1}{F} \got{1}{F} \gnl
\glmpt \gnot{\hspace{-0,34cm}\sigma} \grmptb \gnl
\gvac{1} \gcl{1} \gnl
\gob{3}{B}
\gend:=
\gbeg{2}{4}
\got{1}{F} \got{1}{F} \gnl
\glmptb \gnot{\hspace{-0,34cm}\mu_M} \grmptb \gnl
\gcu{1} \gcl{1} \gnl
\gob{3}{B}
\gend
& &
\gbeg{2}{4}
\got{3}{F} \gnl
\gvac{1} \gcl{1} \gnl
\glmpb \gnot{\hspace{-0,34cm}\rho'} \grmptb \gnl
\gob{1}{B}\gob{1}{B}
\gend:=
\gbeg{2}{4}
\got{3}{F} \gnl
\gu{1} \gcl{1} \gnl
\glmptb \gnot{\hspace{-0,34cm}\Delta_C'} \grmptb \gnl
\gob{1}{B}\gob{1}{B.}
\gend
\end{tabular}
\end{center}

\item The following compatibilities between the (co)units and pre-(co)multiplications hold: 
\begin{equation} \eqlabel{(co)unital mixed}
\gbeg{1}{3}
 \gu{1} \gnl
\hspace{-0,34cm} \gcmu \gnl
\gob{1}{B} \gob{1}{B} \gnl
\gend=
\gbeg{2}{3}
\gu{1} \gu{1} \gnl
\gcl{1} \gcl{1} \gnl
\gob{1}{B} \gob{1}{B} \gnl
\gend  \hspace{2,5cm}
\gbeg{2}{3}
\got{1}{F} \got{1}{F} \gnl
\gmu \gnl
\gvac{1} \hspace{-0,22cm} \gcu{1} \gnl
\gend=
\gbeg{2}{3}
\got{1}{F} \got{1}{F} \gnl
\gcl{1} \gcl{1} \gnl
\gcu{1} \gcu{1} \gnl
\gend 
\end{equation}
\item The distributive laws $\psi: BF\to FB$ and $\phi': FB\to BF$ from the point 2. should additionally fulfill:
\begin{equation} \eqlabel{psi unital add}
\gbeg{2}{3}
\got{1}{B} \got{1}{F} \gnl
\glmptb \gnot{\hspace{-0,34cm}\psi} \grmptb \gnl
\gcu{1} \gcu{1} \gnl
\gend=
\gbeg{2}{3}
\got{1}{B} \got{1}{F} \gnl
\gcl{1} \gcl{1} \gnl
\gcu{1} \gcu{1} \gnl
\gend
\hspace{3cm}
\gbeg{2}{3}
\gu{1} \gu{1} \gnl
\glmptb \gnot{\hspace{-0,34cm}\phi'} \grmptb \gnl
\gob{1}{B} \gob{1}{F}
\gend=
\gbeg{2}{3}
\gu{1} \gu{1} \gnl
\gcl{1} \gcl{1} \gnl
\gob{1}{B} \gob{1}{F}
\gend
\end{equation}
and
\begin{equation} \eqlabel{psi and phi concrete}
\psi=
\gbeg{3}{5}
\got{2}{B} \got{2}{F} \gnl
\gcmu \gcmu \gnl
\gcl{1} \glmptb \gnot{\hspace{-0,34cm}\tau_{B,F}} \grmptb \gcl{1} \gnl
\glm \grmo \gcl{1} \gnl
\gob{1}{} \gob{1}{F} \gob{1}{B} 
\gend 
\hspace{3cm}
\phi'= 
\gbeg{3}{5}
\got{1}{} \got{1}{F} \got{1}{B} \gnl
\grmo \gvac{1} \gcl{1} \grcm \gnl
\gcl{1} \glmptb \gnot{\hspace{-0,34cm}\tau_{F,B}} \grmptb \gcl{1} \gnl
\gmu \gmu \gnl
\gob{2}{B} \gob{2}{F} 
\gend
\end{equation}
where $\tau_{B,F}, \tau_{F,F}, \tau_{F,F}, \tau_{F,B}$ are left and right monadic and comonadic distributive laws, where the adjectives ``comonadic'' corresponding to $B$ 
and ``monadic'' corresponding to $F$ are meant with respect to the 2-cells from the point 2.
\item 2-cells $\lambda_B:BB\to BB, \lambda_F:FF\to FF$ in $\K$ 
so that the following compatibility conditions are fulfilled: 
\vspace{-0,7cm} 
\begin{center} \hspace{-1,3cm} 
\begin{tabular}{p{7cm}p{0cm}p{7cm}}
\begin{equation} \eqlabel{proj B}
\gbeg{3}{5}
\got{2}{B} \got{1}{B} \gnl
\gcmu \gcl{1} \gnl
\gcl{1} \glmptb \gnot{\hspace{-0,34cm}\lambda_B} \grmptb \gnl
\gcl{1} \gcl{1} \gcl{1} \gnl
\gob{1}{B} \gob{1}{B}  \gob{1}{B}
\gend
=
\gbeg{5}{9}
\gvac{1} \got{1}{B} \got{5}{B} \gnl
\gcn{1}{1}{3}{3} \gvac{2} \gwcm{3} \gnl
\gwcm{3} \grcm \gcl{4} \gnl
\grcm \glmptb \gnot{\hspace{-0,34cm}\tau_{B,B}} \grmptb \gcl{1} \gnl
\gcl{1} \glmptb \gnot{\hspace{-0,34cm}\tau_{F,B}} \grmptb  \glmptb \gnot{\hspace{-0,34cm}\psi} \grmptb \gnl %
\gcl{3} \gcl{3}  \glmpt \gnot{\hspace{-0,34cm}\sigma} \grmptb \gcl{1} \gnl
\gvac{3} \gmu \gcn{1}{1}{1}{0} \gnl
\gcl{1} \gvac{3} \hspace{-0.32cm} \gmu \gnl
\gvac{1} \hspace{-0.24cm} \gob{1}{B} \gob{1}{B} \gvac{2} \gob{1}{B} 
\gend
\end{equation} & & 
\begin{equation} \eqlabel{proj F}
\gbeg{3}{5}
\got{1}{F} \got{1}{F} \got{1}{F} \gnl
\gcl{1} \gcl{1} \gcl{1} \gnl
\glmptb \gnot{\hspace{-0,34cm}\lambda_F} \grmptb  \gcl{1} \gnl
\gcl{1} \gmu \gnl
\gob{1}{F}  \gob{2}{F}
\gend
=
\gbeg{6}{9}
\gvac{1} \got{2}{F} \gvac{1} \got{2}{F} \got{1}{\hspace{-0,34cm}F} \gnl
\gvac{1} \gcmu \gvac{1} \gcn{1}{3}{2}{2} \gcn{1}{3}{2}{2} \gnl
\gvac{1} \hspace{-0.32cm} \gcn{1}{1}{2}{1} \gcmu \gnl
\gvac{1} \gcl{4} \gcl{1} \glmptb \gnot{\hspace{-0,34cm}\ro'} \grmpb \gnl
\gvac{2} \glmptb \gnot{\hspace{-0,34cm}\phi'} \grmptb \glmptb \gnot{\hspace{-0,34cm}\tau_{B,F}} \grmptb \gcl{1}  \gnl %
\gvac{2} \gcl{1} \glmptb \gnot{\hspace{-0,34cm}\tau_{F,F}} \grmptb \glm \gnl
\gvac{2} \glm \gwmu{3} \gnl
\gvac{1} \gwmu{3} \gcn{1}{1}{3}{3} \gnl
\gvac{2} \gob{1}{F} \gvac{2} \gob{1}{F} 
\gend 
\end{equation} 
\end{tabular}
\end{center} \vspace{-0,7cm} 
and
\vspace{-0,7cm} 
\begin{center} \hspace{-1,3cm} 
\begin{tabular}{p{6cm}p{0cm}p{6cm}}
\begin{equation} \eqlabel{mu_M paired}
\mu_M=
\gbeg{3}{5}
\got{1}{F} \got{2}{F} \gnl
\gcl{1} \gcmu \gnl
\glmptb \gnot{\hspace{-0,34cm}\lambda_F} \grmptb \gcl{1} \gnl
\gcl{1} \glmpt \gnot{\hspace{-0,34cm}\sigma} \grmptb \gnl
\gob{1}{F} \gob{3}{B} \gnl
\gend
\end{equation} & & 
\begin{equation} \eqlabel{Delta_C' paired}
\Delta_C'=
\gbeg{3}{5}
\got{1}{F} \got{3}{B} \gnl
\glmpb \gnot{\hspace{-0,34cm}\rho'} \grmptb \gcl{1} \gnl
\gcl{1} \glmptb \gnot{\hspace{-0,34cm}\lambda_B} \grmptb \gnl
\gmu \gcl{1} \gnl
\gob{2}{B} \gob{1}{B} \gnl
\gend
\end{equation}
\end{tabular}
\end{center} \vspace{-0,7cm} 
\item $FB$ is a $\tau$-bimonad where $FB$ has the monad and comonad structure from the wreath product and the cowreath coproduct (see point 2.), that is: 
\begin{equation}  \eqlabel{wreath (co)product}
\nabla_{FB}=
\gbeg{3}{5}
\got{1}{F} \got{1}{B} \got{1}{F} \got{3}{B}  \gnl
\gcl{1}  \glmptb \gnot{\hspace{-0,34cm}\psi} \grmptb \gvac{1} \gcl{1} \gnl
\glmptb \gnot{\hspace{-0,34cm}\mu_M} \grmptb \gwmu{3} \gnl
\gcl{1} \gwmu{3} \gnl
\gob{1}{F} \gvac{1} \gob{1}{B}
\gend\hspace{1,5cm}
\eta_{FB}=
\gbeg{2}{3}
\glmpb \gnot{\hspace{-0,34cm}\eta_M} \grmpb \gnl
\gcl{1} \gcl{1} \gnl
\gob{1}{F} \gob{1}{B}
\gend; \hspace{1,5cm}
\Delta_{FB}=
\gbeg{3}{5}
\gvac{2} \got{1}{F} \got{3}{B} \gnl
\gvac{1} \gwcm{3} \gcl{1} \gnl
\gwcm{3} \glmptb \gnot{\hspace{-0,34cm}\Delta_C'} \grmptb \gnl
\gcl{1} \gvac{1} \glmptb \gnot{\hspace{-0,34cm}\phi'} \grmptb \gcl{1} \gnl
\gob{1}{F} \gvac{1} \gob{1}{B} \got{1}{F} \got{1}{B}
\gend\hspace{1,5cm}
\Epsilon_{FB}=
\gbeg{2}{4}
\got{1}{F} \got{1}{B} \gnl
\gcl{1} \gcl{1} \gnl
\glmpt \gnot{\hspace{-0,34cm}\Epsilon_C'} \grmpt \gnl
\gend
\end{equation}
and $\tau_{FB,FB}$ is given by 
$$\tau_{FB,FB}=
\gbeg{3}{5}
\got{1}{F} \got{1}{B} \got{1}{F} \got{1}{B} \gnl
\gcl{1} \glmptb \gnot{\hspace{-0,34cm}\tau_{B,F}} \grmptb  \gcl{1} \gnl
\glmptb \gnot{\hspace{-0,34cm}\tau_{F,F}} \grmptb  \glmptb \gnot{\hspace{-0,34cm}\tau_{B,B}} \grmptb  \gnl
\gcl{1} \glmptb \gnot{\hspace{-0,34cm}\tau_{F,B}} \grmptb  \gcl{1} \gnl
\gob{1}{F} \gob{1}{B} \gob{1}{F} \gob{1}{B.}
\gend
$$
\end{enumerate} 
\end{defn}


\begin{cor} \colabel{(co)actions}
Let $(B,F)$ be a paired wreath in $\K$. Then the 2-cells \equref{F left B-mod} and \equref{B right F-comod}
make $F$ a left $B$-module and $B$ a right $F$-comodule. Moreover, it holds: 
\begin{equation} \eqlabel{(co)mod (co)alg (co)unital}
\gbeg{2}{3}
\got{1}{B} \got{1}{F} \gnl
\glm \gnl
\gvac{1} \gcu{1}
\gend=
\gbeg{2}{3}
\got{1}{B} \got{1}{F} \gnl
\gcl{1} \gcl{1} \gnl
\gcu{1} \gcu{1} \gnl
\gend
\hspace{2,5cm}
\gbeg{2}{3}
\gu{1} \gnl
\grcm \gnl
\gob{1}{B} \gob{1}{F}
\gend=
\gbeg{2}{3}
\gu{1} \gu{1} \gnl
\gcl{1} \gcl{1} \gnl
\gob{1}{B} \gob{1}{F}
\gend
\end{equation}
and 
\begin{equation} \eqlabel{right F-mod counits}
\gbeg{2}{3}
\got{1}{B} \got{1}{F} \gnl
\grmo \gcl{1} \gnl 
\gcu{1}
\gend=
\gbeg{2}{3}
\got{1}{B} \got{1}{F} \gnl
\gcl{1} \gcl{1} \gnl
\gcu{1} \gcu{1} \gnl
\gend
\hspace{2,5cm}
\gbeg{2}{3}
\gvac{1} \gu{1} \gnl
\grmo \gvac{1} \gcl{1} \gnl
\gob{1}{B} \gob{1}{F}
\gend=
\gbeg{2}{3}
\gu{1} \gu{1} \gnl
\gcl{1} \gcl{1} \gnl
\gob{1}{B} \gob{1}{F}
\gend
\end{equation} 
\end{cor}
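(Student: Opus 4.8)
The plan is to deduce the module/comodule claims directly from \prref{distr-actions} and to reduce all four displayed counitality identities to the single pair of hypotheses \equref{psi unital add}, so that essentially no new computation is needed.

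First I would establish that \equref{F left B-mod} is a left $B$-action. Since $(F,\psi,\mu_M,\eta_M)$ is a left wreath around the monad $B$, the 2-cell $\psi\colon BF\to FB$ is a left monadic distributive law, and by hypothesis $\Epsilon_B$ is a counit for $B$ satisfying the first two compatibilities of \equref{(co)unital}, namely $\Epsilon_B\mu_B=\Epsilon_B\otimes\Epsilon_B$ and $\Epsilon_B\eta_B=\Id$. These are precisely the hypotheses of \prref{distr-actions}(a), and the action produced there is exactly \equref{F left B-mod}; so that half is immediate. For the right $F$-coaction \equref{B right F-comod} on $B$ I would invoke the $\alpha$-symmetric form of \prref{distr-actions}(b): applying the role-swap $B\leftrightarrow F$ together with the left-right flip converts ``left comonadic distributive law making $F$ a left $B$-comodule'' into ``right comonadic distributive law $\phi'\colon FB\to BF$ making $B$ a right $F$-comodule.'' The hypotheses demanded under this symmetry are that $F$ is a comonad and that $\phi'$ is a right comonadic distributive law, both from point~2 of \deref{paired bl object}, together with $\Delta_F\eta_F=\eta_F\otimes\eta_F$ and $\Epsilon_F\eta_F=\Id$, which are the last two compatibilities of \equref{(co)unital}. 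This yields the comodule assertion.

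Next I would dispatch the counitality identities by unfolding the defining diagrams and quoting \equref{psi unital add}. Unfolding \equref{F left B-mod}, the left-hand side of the first identity in \equref{(co)mod (co)alg (co)unital} becomes $\psi$ post-composed with $\Epsilon_F$ on its $F$-leg and $\Epsilon_B$ on its $B$-leg, i.e. $(\Epsilon_F\otimes\Epsilon_B)\psi$, which is exactly the left-hand side of the first relation in \equref{psi unital add}; its right-hand side $\Epsilon_B\otimes\Epsilon_F$ is the right-hand side there. Dually, unfolding \equref{B right F-comod} turns the left-hand side of the second identity in \equref{(co)mod (co)alg (co)unital} into $\phi'(\eta_F\otimes\eta_B)$, matching the second relation of \equref{psi unital add}. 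The two identities of \equref{right F-mod counits} collapse the same way: expanding the right $F$-action \equref{right F-mod} and composing with $\Epsilon_B$ reproduces $(\Epsilon_F\otimes\Epsilon_B)\psi$ once more, while expanding the left $B$-coaction \equref{left B-comod} and precomposing with $\eta_F$ reproduces $\phi'(\eta_F\otimes\eta_B)$. Hence all four identities land on \equref{psi unital add}.

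I do not anticipate a genuine obstacle; the content is organizational rather than computational. The one point requiring care is the comodule step, where the stated $\alpha$-symmetry must be applied correctly, so that the (co)unit hypotheses of \prref{distr-actions}(b) fall on $\eta_F,\Delta_F,\Epsilon_F$ rather than on the $B$-structures, and so that ``left comonadic'' is read as ``right comonadic'' after the flip. Once that symmetry dictionary is fixed, each remaining verification is a one-line diagram unfolding against \equref{psi unital add}.
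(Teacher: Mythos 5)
Your proposal is correct and follows essentially the same route as the paper: the (co)module claims are obtained from points 1 and 2 of \deref{paired bl object} via \prref{distr-actions}(a) and the left-right flipped (role-swapped) version of \prref{distr-actions}(b), and all four counitality identities reduce, after unfolding \equref{F left B-mod}--\equref{left B-comod}, to the two relations in \equref{psi unital add}. Your careful handling of the symmetry in the comodule step (placing the (co)unit hypotheses on $\eta_F,\Delta_F,\Epsilon_F$ and reading ``left comonadic'' as ``right comonadic'') is exactly what the paper means by its ``right hand-side version of part b)''.
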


\begin{proof}
The first claim follows by the points 1. and 2. in \deref{paired bl object} and by the part a) and the right hand-side version of the part b) of \prref{distr-actions}. 
(The axioms for $\psi$ and $\phi'$ are listed below in \equref{psi laws for B} and \equref{phi' laws for B}, respectively.) The second assertion follows then by \equref{psi unital add}, 
and the last one follows from \equref{right F-mod} and \equref{left B-comod}, by \equref{psi unital add}. 
\qed\end{proof}

We now list the axioms from the point 2. in \deref{paired bl object}, saying that $(F, \psi: BF\to FB, \mu_M, \eta_M)$ is a left wreath around $B$ and that 
$(B, \phi': FB\to BF, \Delta_C', \Epsilon_C')$ is a right cowreath around $F$. We have the 2-cells in $\K$:
$$\psi: BF\to FB, \quad \phi': FB\to BF$$
$$\mu_M:FF\to FB, \quad \eta_M:\Id_{\A}\to FB, \quad \Delta_C': FB\to BB, \quad \Epsilon_C': FB\to\Id_{\A}$$
which obey the $\psi$ and $\phi'$ axioms:
\begin{center} \hspace{-0,8cm}
\begin{tabular}{p{7.4cm}p{0cm}p{8cm}}
\begin{equation}\eqlabel{psi laws for B}
\gbeg{3}{5}
\got{1}{B}\got{1}{B}\got{1}{F}\gnl
\gcl{1} \glmpt \gnot{\hspace{-0,34cm}\psi} \grmptb \gnl
\glmptb \gnot{\hspace{-0,34cm}\psi} \grmptb \gcl{1} \gnl
\gcl{1} \gmu \gnl
\gob{1}{F} \gob{2}{B}
\gend=
\gbeg{3}{5}
\got{1}{B}\got{1}{B}\got{1}{F}\gnl
\gmu \gcn{1}{1}{1}{0} \gnl
\gvac{1} \hspace{-0,34cm} \glmptb \gnot{\hspace{-0,34cm}\psi} \grmptb  \gnl
\gvac{1} \gcl{1} \gcl{1} \gnl
\gvac{1} \gob{1}{F} \gob{1}{B}
\gend;
\quad
\gbeg{2}{5}
\got{3}{F} \gnl
\gu{1} \gcl{1} \gnl
\glmptb \gnot{\hspace{-0,34cm}\psi} \grmptb \gnl
\gcl{1} \gcl{1} \gnl
\gob{1}{F} \gob{1}{B}
\gend=
\gbeg{2}{5}
\got{1}{F} \gnl
\gcl{1} \gu{1} \gnl
\gcl{2} \gcl{2} \gnl
\gob{1}{F} \gob{1}{B}
\gend
\end{equation} & &
\begin{equation}\eqlabel{phi' laws for B}
\gbeg{3}{5}
\got{2}{F} \got{1}{B}\gnl
\gcmu \gcl{1} \gnl
\gcl{1} \glmptb \gnot{\hspace{-0,34cm}\phi'} \grmptb \gnl
\glmptb \gnot{\hspace{-0,34cm}\phi'} \grmptb \gcl{1} \gnl
\gob{1}{B} \gob{1}{F} \gob{1}{F}
\gend=
\gbeg{3}{5}
\got{2}{F} \got{1}{\hspace{-0,2cm}B}\gnl
\gcn{1}{1}{2}{2} \gcn{1}{1}{2}{2} \gnl
\gvac{1} \hspace{-0,34cm} \glmptb \gnot{\hspace{-0,34cm}\phi'} \grmptb \gnl
\gvac{1} \gcn{1}{1}{1}{0} \hspace{-0,2cm} \gcmu \gnl
\gvac{1} \gob{1}{B} \gob{1}{F} \gob{1}{F}
\gend;
\quad
\gbeg{2}{5}
\got{1}{F} \got{1}{B} \gnl
\gcl{1} \gcl{1} \gnl
\glmptb \gnot{\hspace{-0,34cm}\phi'} \grmptb \gnl
\gcl{1} \gcu{1} \gnl
\gob{1}{B}
\gend=
\gbeg{2}{5}
\got{1}{F} \got{1}{B} \gnl
\gcl{1} \gcl{3} \gnl
\gcu{1} \gnl
\gob{3}{B}
\gend
\end{equation}
\end{tabular}
\end{center} 
the 2-cell conditions: 
\begin{center} \hspace{-1,4cm}
\begin{tabular}{p{6.2cm}p{1cm}p{7cm}}
\begin{equation} \eqlabel{2-cell mu_M}
\gbeg{3}{6}
\got{1}{B} \got{1}{F} \got{1}{F} \gnl
\glmptb \gnot{\hspace{-0,34cm}\psi} \grmptb \gcl{1} \gnl
\gcl{1} \glmptb \gnot{\hspace{-0,34cm}\psi} \grmptb \gnl
\glmptb \gnot{\hspace{-0,34cm}\mu_M} \grmptb \gcl{1} \gnl
\gcl{1} \gmu \gnl
\gob{1}{F} \gob{2}{B}
\gend=
\gbeg{3}{5}
\got{1}{B} \got{1}{F} \got{1}{F}\gnl
\gcl{1} \glmptb \gnot{\hspace{-0,34cm}\mu_M} \grmptb \gnl
\glmptb \gnot{\hspace{-0,34cm}\psi} \grmptb \gcl{1} \gnl
\gcl{1} \gmu \gnl
\gob{1}{F} \gob{2}{B}
\gend
\end{equation} &   &
\begin{equation}\eqlabel{2-cell eta_M}
\gbeg{3}{4}
\got{1}{} \got{3}{B}\gnl
\glmpb \gnot{\hspace{-0,34cm}\eta_M} \grmpb \gcl{1} \gnl
\gcl{1} \gmu \gnl
\gob{1}{F} \gob{2}{B}
\gend=
\gbeg{3}{5}
\got{1}{B}\gnl
\gcl{1} \glmpb \gnot{\hspace{-0,34cm}\eta_M} \grmpb \gnl
\glmptb \gnot{\hspace{-0,34cm}\psi} \grmptb \gcl{1} \gnl
\gcl{1} \gmu \gnl
\gob{1}{F} \gob{2}{B}
\gend
\end{equation} 
\end{tabular}
\end{center}

\begin{center} \hspace{-2,6cm}
\begin{tabular}{p{7.2cm}p{2cm}p{5cm}}
\begin{equation}\eqlabel{2-cell Delta_C'}
\gbeg{3}{5}
\got{2}{F} \got{1}{B} \gnl
\gcmu \gcl{1} \gnl
\gcl{1} \glmptb \gnot{\hspace{-0,34cm}\phi'} \grmptb \gnl
\glmptb \gnot{\hspace{-0,34cm}\Delta_C'} \grmptb \gcl{1} \gnl
\gob{1}{B} \gob{1}{B} \gob{1}{F}
\gend=
\gbeg{3}{6}
\got{2}{F} \got{1}{B} \gnl
\gcmu \gcl{1} \gnl
\gcl{1} \glmptb \gnot{\hspace{-0,34cm}\Delta_C'} \grmptb \gnl
\glmptb \gnot{\hspace{-0,34cm}\phi'} \grmptb \gcl{1} \gnl
\gcl{1} \glmptb \gnot{\hspace{-0,34cm}\phi'} \grmptb \gnl
\gob{1}{B} \gob{1}{B} \gob{1}{F}
\gend
\end{equation} & &
\begin{equation}\eqlabel{2-cell Epsilon_C'}
\gbeg{3}{5}
\got{2}{F} \got{1}{B}\gnl
\gcmu \gcl{1} \gnl
\gcl{1} \glmpb \gnot{\hspace{-0,34cm}\phi'} \grmpb \gnl
\glmpt \gnot{\hspace{-0,34cm}\Epsilon_C'} \grmpt \gcl{1} \gnl
\gob{5}{F}
\gend=
\gbeg{3}{5}
\got{2}{F} \got{1}{B}\gnl
\gcmu \gcl{1} \gnl
\gcl{2} \glmpt \gnot{\hspace{-0,34cm}\Epsilon_C'} \grmpt \gnl
\gob{1}{F}
\gend
\end{equation}
\end{tabular}
\end{center}
and the monad and comonad compatibility laws in the corresponding Eilenberg-Moore categories, which translate into the following conditions in $\K$: 
\vspace{-0,4cm}
\begin{center} \hspace{0,4cm}
\begin{tabular}{p{5.2cm}p{2cm}p{6.8cm}}
\begin{equation} \eqlabel{monad law mu_M}
\gbeg{3}{5}
\got{1}{F} \got{1}{F} \got{1}{F}\gnl
\gcl{1} \glmptb \gnot{\hspace{-0,34cm}\mu_M} \grmptb \gnl
\glmptb \gnot{\hspace{-0,34cm}\mu_M} \grmptb \gcl{1} \gnl
\gcl{1} \gmu \gnl
\gob{1}{F} \gob{2}{B}
\gend=
\gbeg{3}{6}
\got{1}{F} \got{1}{F} \got{1}{F} \gnl
\glmptb \gnot{\hspace{-0,34cm}\mu_M} \grmptb \gcl{1} \gnl
\gcl{1} \glmptb \gnot{\hspace{-0,34cm}\psi} \grmptb \gnl
\glmptb \gnot{\hspace{-0,34cm}\mu_M} \grmptb \gcl{1} \gnl
\gcl{1} \gmu \gnl
\gob{1}{F} \gob{2}{B}
\gend
\end{equation} &  & \vspace{0,1cm}
\begin{equation}\eqlabel{monad law eta_M}
\gbeg{3}{6}
\got{1}{} \got{3}{F}\gnl
\glmpb \gnot{\hspace{-0,34cm}\eta_M} \grmpb \gcl{1} \gnl
\gcl{1} \glmptb \gnot{\hspace{-0,34cm}\psi} \grmptb \gnl
\glmptb \gnot{\hspace{-0,34cm}\mu_M} \grmptb \gcl{1} \gnl
\gcl{1} \gmu \gnl
\gob{1}{F} \gob{2}{B}
\gend=
\gbeg{2}{4}
\got{1}{F}\gnl
\gcl{1} \gu{1} \gnl
\gcl{1} \gcl{1} \gnl
\gob{1}{F} \gob{1}{B}
\gend=
\gbeg{3}{5}
\got{1}{F}\gnl
\gcl{1} \glmpb \gnot{\hspace{-0,34cm}\eta_M} \grmpb \gnl
\glmptb \gnot{\hspace{-0,34cm}\mu_M} \grmptb \gcl{1} \gnl
\gcl{1} \gmu \gnl
\gob{1}{F} \gob{2}{B}
\gend
\end{equation} 
\end{tabular}
\end{center} \vspace{-0,4cm}
$$ \textnormal{ \hspace{-1cm} \footnotesize monad law for $\mu_M$}  \hspace{6,5cm}  \textnormal{\footnotesize monad law for $\eta_M$} $$ \vspace{-0,7cm}


\begin{center} 
\begin{tabular}{p{6cm}p{2cm}p{6.8cm}}
\begin{equation} \eqlabel{comonad law Delta_C'}
\gbeg{3}{6}
\got{2}{F} \got{1}{B}\gnl
\gcmu \gcl{1} \gnl
\gcl{1} \glmptb \gnot{\hspace{-0,34cm}\Delta_C'} \grmptb \gnl
\glmptb \gnot{\hspace{-0,34cm}\phi'} \grmptb \gcl{1} \gnl
\gcl{1} \glmptb \gnot{\hspace{-0,34cm}\Delta_C'} \grmptb \gnl
\gob{1}{B} \gob{1}{B}  \gob{1}{B}
\gend=
\gbeg{3}{5}
\got{2}{F} \got{1}{B}\gnl
\gcmu \gcl{1} \gnl
\gcl{1} \glmptb \gnot{\hspace{-0,34cm}\Delta_C'} \grmptb \gnl
\glmptb \gnot{\hspace{-0,34cm}\Delta_C'} \grmptb \gcl{1} \gnl
\gob{1}{B} \gob{1}{B}  \gob{1}{B}
\gend
\end{equation} &  &
\begin{equation}\eqlabel{comonad law Epsilon_C'}
\gbeg{3}{6}
\got{2}{F} \got{1}{B}\gnl
\gcmu \gcl{1} \gnl
\gcl{1} \glmptb \gnot{\hspace{-0,34cm}\Delta_C'} \grmptb \gnl
\glmptb \gnot{\hspace{-0,34cm}\phi'} \grmptb \gcl{1} \gnl
\gcl{1} \glmpt \gnot{\hspace{-0,34cm}\Epsilon_C'} \grmpt \gnl
\gob{1}{B}
\gend=
\gbeg{2}{4}
\got{1}{F} \got{1}{B}\gnl
\gcl{1} \gcl{1} \gnl
\gcu{1} \gcl{1} \gnl
\gob{3}{B}
\gend=
\gbeg{3}{5}
\got{2}{F} \got{1}{B}\gnl
\gcmu \gcl{1} \gnl
\gcl{1} \glmptb \gnot{\hspace{-0,34cm}\Delta_C'} \grmptb \gnl
\glmpt \gnot{\hspace{-0,34cm}\Epsilon_C'} \grmpt \gcl{1} \gnl
\gob{5}{B} \gnl
\gend
\end{equation}
\end{tabular}
\end{center} \vspace{-0,5cm}
$$ \textnormal{\hspace{-0,7cm}  \footnotesize comonad law for $\Delta_C'$}  \hspace{6,5cm}  \textnormal{\footnotesize comonad law for $\Epsilon_C'$} $$

\bigskip

\begin{rem}
Observe that we have the following: 
\begin{equation} \eqlabel{trivial psi-phi}
\gbeg{2}{4}
\got{1}{F} \got{1}{B} \gnl
\glmptb \gnot{\hspace{-0,34cm}\phi'} \grmptb \gnl
\gcu{1} \gcl{1} \gnl
\gob{3}{F} 
\gend\stackrel{\equref{2-cell Epsilon_C'}}{=}
\gbeg{2}{4}
\got{1}{F} \got{1}{B} \gnl
\gcl{2} \gcu{1} \gnl
\gob{1}{F} 
\gend,
\hspace{1,2cm} 
\gbeg{2}{4}
\got{3}{F} \gnl
\gu{1} \gcl{1} \gnl
\glmptb \gnot{\hspace{-0,34cm}\psi} \grmptb \gnl
\gob{1}{F} \gob{1}{B} 
\gend\stackrel{\equref{psi laws for B}}{=}
\gbeg{2}{4}
\got{1}{F} \gnl
\gcl{1} \gu{1} \gnl
\gcl{1} \gcl{1} \gnl
\gob{1}{F} \gob{1}{B} 
\gend,
\hspace{1,2cm} 
\gbeg{2}{4}
\got{1}{B} \gnl
\gcl{1} \gu{1} \gnl
\glmptb \gnot{\hspace{-0,34cm}\psi} \grmptb \gnl
\gob{1}{F} \gob{1}{B} 
\gend\stackrel{\equref{2-cell eta_M}}{=}
\gbeg{2}{4}
\got{3}{B} \gnl
\gu{1} \gcl{2} \gnl
\gcl{1} \gnl
\gob{1}{F} \gob{1}{B}
\gend,
\hspace{1,2cm} 
\gbeg{2}{4}
\got{1}{F} \got{1}{B} \gnl
\glmptb \gnot{\hspace{-0,34cm}\phi'} \grmptb \gnl
\gcl{1} \gcu{1} \gnl
\gob{1}{B}  
\gend\stackrel{\equref{phi' laws for B}}{=}
\gbeg{2}{4}
\got{1}{F} \got{1}{B} \gnl
\gcu{1} \gcl{2} \gnl
\gob{3}{B}  
\gend
\end{equation} 
where in the first and the third identity we assumed that the 2-cells $\eta_M:\Id_{\A}\to FB $ and $\Epsilon_C': FB\to\Id_{\A}$ are canonical. 
Then if we set: 
$$
\gbeg{2}{3}
\got{1}{F} \got{1}{B} \gnl
\grmf \gcn{1}{1}{-3}{-3} \gnl 
\gob{1}{F}
\gend:=
\gbeg{2}{4}
\got{1}{F} \got{1}{B} \gnl
\glmptb \gnot{\hspace{-0,34cm}\phi'} \grmptb \gnl
\gcu{1} \gcl{1} \gnl
\gob{3}{F} 
\gend
\hspace{1,5cm} 
\gbeg{2}{3}
\got{1}{F} \gnl
\gcl{1} \hspace{-0,42cm} \glmf \gnl  
\gvac{1} \gob{1}{F} \gob{1}{B}
\gend:=
\gbeg{2}{4}
\got{3}{F} \gnl
\gu{1} \gcl{1} \gnl
\glmptb \gnot{\hspace{-0,34cm}\psi} \grmptb \gnl
\gob{1}{F} \gob{1}{B} 
\gend
\hspace{1,5cm} 
\gbeg{2}{3}
\got{3}{B} \gnl
\grmf \gcn{1}{1}{-1}{-1} \gnl
\gob{1}{F} \gob{1}{B}
\gend:=
\gbeg{2}{4}
\got{1}{B} \gnl
\gcl{1} \gu{1} \gnl
\glmptb \gnot{\hspace{-0,34cm}\psi} \grmptb \gnl
\gob{1}{F} \gob{1}{B} 
\gend
\hspace{1,5cm} 
\gbeg{2}{3}
\got{1}{F} \got{1}{B} \gnl
\glmf \gcn{1}{1}{-1}{-1} \gnl
\gob{3}{B}
\gend:=
\gbeg{2}{4}
\got{1}{F} \got{1}{B} \gnl
\glmptb \gnot{\hspace{-0,34cm}\phi'} \grmptb \gnl
\gcl{1} \gcu{1} \gnl
\gob{1}{B}  
\gend
$$
all these (co)module actions turn out to be trivial. 
\end{rem}

\subsection{Structures inside a paired wreath}

Similarly as we did in \cite{Femic5}, we will apply 
$\gbeg{2}{2}
\got{1}{B}\gnl
\gcu{1} \gnl
\gend$ to the monadic axioms and 
$\gbeg{2}{2}
\gu{1} \gnl
\gob{1}{F} \gnl
\gend$ to the comonadic axioms in the above list and then use the pre-counit-multiplication and the pre-unit-comultiplication compatibility relations \equref{(co)unital}, respectively. 
In this way we obtain the following identities that hold in a paired wreath: 
\begin{center} \hspace{-1,4cm}
\begin{tabular} {p{6.4cm}p{2cm}p{4cm}} 
\begin{equation} \eqlabel{mod alg}
\gbeg{3}{5}
\got{1}{B} \got{1}{F} \got{1}{F} \gnl
\glmptb \gnot{\hspace{-0,34cm}\psi} \grmptb \gcl{1} \gnl
\gcl{1} \glm \gnl
\gwmu{3} \gnl
\gob{3}{F}
\gend=
\gbeg{3}{5}
\got{1}{B} \got{1}{F} \got{3}{F}\gnl
\gcl{1} \gwmu{3} \gnl
\gcn{1}{1}{1}{3} \gvac{1} \gcl{2} \gnl
\gvac{1} \glm \gnl
\gob{5}{F}
\gend
\end{equation} & &  \vspace{-0,9cm}
\begin{equation} \eqlabel{mod alg unity}
\gbeg{3}{5}
\got{1}{B} \gnl
\gcl{1} \gu{1} \gnl
\glm \gnl
\gvac{1} \gcl{1} \gnl
\gob{3}{F}
\gend=
\gbeg{2}{6}
\got{1}{B} \gnl
\gcl{1} \gnl
\gcu{1} \gnl
\gu{1} \gnl
\gcl{1} \gnl
\gob{1}{F}
\gend
\end{equation}
\end{tabular}
\end{center} \vspace{-0,2cm}
$$ \textnormal{ \footnotesize module monad}  \hspace{4,5cm}  \textnormal{\footnotesize module monad unity} $$ \vspace{-0,7cm}
$$ \textnormal{\footnotesize from 2-cell cond. of $\mu_M$ \equref{2-cell mu_M}} \hspace{4cm} \textnormal{\footnotesize from 2-cell cond. of $\eta_M$ \equref{2-cell eta_M}} $$

\begin{center} \hspace{-1,4cm}
\begin{tabular} {p{6cm}p{2cm}p{4cm}} 
\begin{equation}\eqlabel{comod coalg}
\gbeg{3}{6}
\got{3}{B} \gnl
\gvac{1} \gcl{1} \gnl
\gwcm{3} \gnl
\grcm \gcl{1} \gnl
\gcl{1} \glmptb \gnot{\hspace{-0,34cm}\phi'} \grmptb \gnl
\gob{1}{B} \gob{1}{B} \gob{1}{F}
\gend=
\gbeg{3}{6}
\got{3}{B}\gnl
\gvac{1} \gcl{1} \gnl
\gvac{1} \grcm \gnl
\gvac{1} \gcl{1} \gcn{1}{1}{1}{3} \gnl
\gwcm{3} \gcl{1} \gnl
\gob{1}{B} \gvac{1} \gob{1}{B} \gob{1}{F}
\gend
\end{equation} & & \vspace{0,1cm}
\begin{equation}\eqlabel{comod coalg counity}
\gbeg{2}{4}
\got{1}{B} \gnl
\grcm \gnl
\gcu{1} \gcl{1} \gnl
\gob{3}{F}
\gend=
\gbeg{1}{4}
\got{1}{B} \gnl
\gcu{1} \gnl
\gu{1} \gnl
\gob{1}{F}
\gend
\end{equation}
\end{tabular}
\end{center}
\vspace{-0,2cm}
$$ \textnormal{ \footnotesize \hspace{1cm} comodule comonad}  \hspace{4,5cm}  \textnormal{\footnotesize comodule comonad counity} $$ \vspace{-0,7cm}
$$ \textnormal{\footnotesize from 2-cell cond. of $\Delta_C'$ \equref{2-cell Delta_C'}} \hspace{4cm} \textnormal{\footnotesize from 2-cell cond. of $\Epsilon_C'$ \equref{2-cell Epsilon_C'}} $$


\begin{center} \hspace{-1,4cm}
\begin{tabular} {p{7.2cm}p{0cm}p{6.8cm}}
\begin{equation}\eqlabel{weak assoc. mu_M}
\gbeg{4}{4}
\got{1}{F} \got{1}{F} \got{3}{F} \gnl
\gcl{1} \gwmu{3} \gnl
\gwmu{3} \gnl
\gob{3}{F}
\gend=
\gbeg{3}{5}
\got{1}{F} \got{1}{F} \got{1}{F} \gnl
\glmptb \gnot{\hspace{-0,34cm}\mu_M} \grmptb \gcl{1} \gnl
\gcl{1} \glm \gnl
\gwmu{3} \gnl
\gob{3}{F}
\gend
\end{equation} & &
\begin{equation}\eqlabel{weak unity eta_M}
\gbeg{3}{5}
\got{1}{} \got{3}{F}\gnl
\glmpb \gnot{\hspace{-0,34cm}\eta_M} \grmpb \gcl{1} \gnl
\gcl{1} \glm \gnl
\gwmu{3} \gnl
\gob{3}{F}
\gend=
\gbeg{1}{4}
\got{1}{F}\gnl
\gcl{2} \gnl
\gob{1}{F}
\gend=
\gbeg{2}{4}
\got{1}{F}\gnl
\gcl{1} \gu{1} \gnl
\gmu \gnl
\gob{2}{F}
\gend
\end{equation}
\end{tabular}
\end{center}
\vspace{-0,5cm}
$$ \textnormal{ \footnotesize \hspace{-1,4cm} weak associativity of $\mu_M$}  \hspace{4,5cm}  \textnormal{\footnotesize weak unity $\eta_M$} $$ \vspace{-0,7cm}
$$ \textnormal{\footnotesize from monad law for $\mu_M$ \equref{monad law mu_M}} \hspace{4cm} \textnormal{\footnotesize from monad law for $\eta_M$ \equref{monad law eta_M}} $$

\pagebreak

\begin{center} \hspace{-1,4cm}
\begin{tabular}{p{7cm}p{0cm}p{6.8cm}}
\begin{equation} \eqlabel{weak coass. Delta_C'}
\gbeg{4}{4}
\got{5}{B} \gnl
\gvac{1} \gwcm{3} \gnl
\gwcm{3} \gcl{1} \gnl
\gob{1}{B} \gvac{1} \gob{1}{B} \gob{1}{B} \gnl
\gend=
\gbeg{3}{5}
\got{3}{B} \gnl
\gwcm{3} \gnl
\grcm \gcl{1} \gnl
\gcl{1} \glmptb \gnot{\hspace{-0,34cm}\Delta_C'} \grmptb \gnl
\gob{1}{B} \gob{1}{B} \gob{1}{B} \gnl
\gend
\end{equation} &  &
\begin{equation}\eqlabel{weak counit Epsilon_C'}
\gbeg{3}{5}
\got{3}{B} \gnl
\gwcm{3} \gnl
\grcm \gcl{1} \gnl
\gcl{1} \glmpt \gnot{\hspace{-0,34cm}\Epsilon_C'} \grmpt \gnl
\gob{1}{B}
\gend=
\gbeg{1}{4}
\got{1}{B} \gnl
\gcl{2} \gnl
\gob{1}{B}
\gend=
\gbeg{2}{4}
\got{2}{B} \gnl
\gcmu \gnl
\gvac{1} \hspace{-0,42cm} \gcu{1} \gcl{1} \gnl
\gob{5}{B} \gnl
\gend
\end{equation}
\end{tabular}
\end{center}
\vspace{-0,5cm}
$$ \textnormal{ \footnotesize \hspace{-1,6cm} weak coassociativity of $\Delta_C'$}  \hspace{5cm}  \textnormal{\footnotesize weak counity $\Epsilon_C'$} $$ \vspace{-0,7cm}
$$ \textnormal{\footnotesize from comonad law for $\Delta_C'$ \equref{comonad law Delta_C'}} \hspace{4cm} \textnormal{\footnotesize from comonad law for $\Epsilon_C'$ \equref{comonad law Epsilon_C'}} $$


Next, we apply $\gbeg{2}{2}
\got{1}{F}\gnl
\gcu{1} \gnl
\gend$ to the $\psi$ axioms and the monadic axioms and $\gbeg{2}{2}
\gu{1} \gnl
\gob{1}{B} \gnl
\gend$ to the $\phi'$ axioms and the comonadic axioms in the list of identities \equref{psi laws for B} to \equref{comonad law Epsilon_C'}. We obtain the following laws 
valid in a paired wreath: 
 \begin{center} \hspace{-1,5cm} 
\begin{tabular}{p{7.2cm}p{0cm}p{6cm}}
\begin{equation}\eqlabel{F mod alg}
\gbeg{3}{5}
\got{1}{B}\got{1}{B}\got{1}{F}\gnl
\gcl{1} \glmpt \gnot{\hspace{-0,34cm}\psi} \grmptb \gnl
\grmo \gcl{1} \gvac{1} \gcl{1} \gnl
\gwmu{3} \gnl
\gob{3}{B}
\gend=
\gbeg{3}{5}
\got{1}{B}\got{1}{B}\got{1}{F}\gnl
\gmu \gcn{1}{1}{1}{0} \gnl
\gvac{1} \hspace{-0,34cm} \grmo \gcl{1} \gnl
\gvac{1} \gcl{1} \gnl
\gvac{1} \gob{1}{B}
\gend
\end{equation} & &
\begin{equation}\eqlabel{F mod alg unit}
\gbeg{2}{5}
\got{3}{F} \gnl
\gu{1} \gcl{1} \gnl
\grmo \gcl{1} \gnl
\gcl{1} \gnl
\gob{1}{B}
\gend=
\gbeg{2}{4}
\got{1}{F} \gnl
\gcu{1} \gnl
\gu{1} \gnl
\gob{1}{B}
\gend
\end{equation}

\\  {\hspace{2cm} \footnotesize module monad} & &  { \hspace{1cm} \footnotesize module monad unity} \\
\multicolumn{2}{c}{{ \hspace{5cm} \footnotesize \quad\qquad from $\psi$ axioms \equref{psi laws for B}}} 
\end{tabular}
\end{center}

\begin{center} \hspace{-1,5cm} 
\begin{tabular}{p{7.2cm}p{0cm}p{6cm}}
\begin{equation}\eqlabel{B comod coalg}
\gbeg{3}{5}
\got{3}{F}\gnl
\gwcm{3} \gnl
\gcl{1} \grmo \gvac{1} \gcl{1} \gnl
\glmptb \gnot{\hspace{-0,34cm}\phi'} \grmptb \gcl{1} \gnl
\gob{1}{B} \gob{1}{F} \gob{1}{F} 
\gend=
\gbeg{3}{4}
\got{3}{F}\gnl
\grmo \gvac{1} \gcl{1} \gnl
\gcn{1}{1}{1}{0} \hspace{-0,2cm} \gcmu \gnl
\gob{1}{B} \gob{1}{F} \gob{1}{F} 
\gend
\end{equation} & & \vspace{0,2cm}
\begin{equation}\eqlabel{B comod coalg counit}
\gbeg{2}{4}
\got{3}{F} \gnl
\grmo \gvac{1} \gcl{1}\gnl
\gcl{1} \gcu{1} \gnl
\gob{1}{B} 
\gend=
\gbeg{2}{4}
\got{1}{F} \gnl
\gcu{1} \gnl
\gu{1} \gnl
\gob{1}{B} 
\gend
\end{equation}

\\  {\hspace{2cm} \footnotesize comodule comonad} & &  { \hspace{1cm} \footnotesize comodule comonad counity} \\
\multicolumn{2}{c}{{ \hspace{5cm} \footnotesize \quad\qquad from $\phi'$ axioms \equref{phi' laws for B}}} 
\end{tabular}
\end{center} 

\begin{center} 
\begin{tabular} {p{6cm}p{1cm}p{6cm}} 
\begin{equation} \eqlabel{weak action} 
\gbeg{3}{6}
\got{1}{B} \got{1}{F} \got{1}{F} \gnl
\glmptb \gnot{\hspace{-0,34cm}\psi} \grmptb \gcl{1} \gnl
\gcl{1} \glmptb \gnot{\hspace{-0,34cm}\psi} \grmptb \gnl
\glmptb \gnot{\hspace{-0,34cm}\sigma} \grmpt \gcl{1} \gnl
\gwmu{3} \gnl
\gob{3}{B}
\gend=
\gbeg{3}{5}
\got{1}{B} \got{1}{F} \got{1}{F}\gnl
\gcl{1} \glmptb \gnot{\hspace{-0,34cm}\mu_M} \grmptb \gnl
\grmo \gcl{1} \gvac{1} \gcl{1} \gnl
\gwmu{3} \gnl
\gob{3}{B}
\gend
\end{equation} & & 
\begin{equation} \eqlabel{weak action unity} 
\gbeg{3}{4}
\got{1}{} \got{3}{B}\gnl
\glmpb \gnot{\hspace{-0,34cm}\eta_M} \grmpb \gcl{1} \gnl
\gcu{1} \gmu \gnl
\gob{4}{B}
\gend=
\gbeg{3}{5}
\got{1}{B}\gnl
\gcl{1} \glmpb \gnot{\hspace{-0,34cm}\eta_M} \grmpb \gnl
\grmo \gcl{1} \gvac{1} \gcl{1} \gnl
\gwmu{3} \gnl
\gob{3}{B}
\gend
\end{equation} 
\end{tabular}
\end{center}
\vspace{-0,6cm}
$$ \textnormal{ \footnotesize twisted action}  \hspace{5,5cm}  \textnormal{\footnotesize twisted action unity} $$ \vspace{-0,7cm}
$$ \textnormal{\footnotesize from 2-cell cond. of $\mu_M$ \equref{2-cell mu_M}} \hspace{4cm} \textnormal{\footnotesize from 2-cell cond. of $\eta_M$ \equref{2-cell eta_M}} $$

\begin{center} 
\begin{tabular} {p{6cm}p{1.5cm}p{6cm}} 
\begin{equation}\eqlabel{weak coaction}
\gbeg{3}{5}
\got{3}{F} \gnl
\gwcm{3} \gnl
\gcl{1} \grmo \gvac{1} \gcl{1}\gnl 
\glmptb \gnot{\hspace{-0,34cm}\Delta_C'} \grmptb \gcl{1} \gnl
\gob{1}{B} \gob{1}{B} \gob{1}{F} 
\gend=
\gbeg{3}{6}
\got{2}{F} \gnl
\gcmu \gnl
\gcl{1} \glmptb \gnot{\hspace{-0,34cm}\rho'} \grmpb \gnl
\glmptb \gnot{\hspace{-0,34cm}\phi'} \grmptb \gcl{1} \gnl
\gcl{1} \glmptb \gnot{\hspace{-0,34cm}\phi'} \grmptb \gnl
\gob{1}{B} \gob{1}{B} \gob{1}{F} 
\gend
\end{equation} & & \vspace{0,1cm}
\begin{equation}\eqlabel{weak coaction counity}
\gbeg{3}{5}
\got{3}{F}\gnl
\gwcm{3} \gnl
\gcl{1} \grmo \gvac{1} \gcl{1}\gnl
\glmpt \gnot{\hspace{-0,34cm}\Epsilon_C'} \grmpt \gcl{1} \gnl
\gob{5}{F}
\gend=
\gbeg{3}{5}
\got{2}{F}\gnl
\gcmu \gu{1} \gnl
\gcl{2} \glmpt \gnot{\hspace{-0,34cm}\Epsilon_C'} \grmpt \gnl
\gob{1}{F}
\gend
\end{equation} 
\end{tabular}
\end{center} \vspace{-0,5cm}
$$ \textnormal{ \footnotesize twisted coaction}  \hspace{5,5cm}  \textnormal{\footnotesize twisted coaction counity} $$ \vspace{-0,7cm}
$$ \textnormal{\footnotesize from 2-cell cond. of $\Delta_C'$ \equref{2-cell Delta_C'}} \hspace{4cm} \textnormal{\footnotesize from 2-cell cond. of $\Epsilon_C'$ \equref{2-cell Epsilon_C'}} $$

\begin{center} \hspace{4cm} 
\begin{tabular} {p{6cm}p{1cm}p{6.8cm}} 
\begin{equation}\eqlabel{2-cocycle condition}
\gbeg{3}{5}
\got{1}{F} \got{1}{F} \got{1}{F}\gnl
\gcl{1} \glmptb \gnot{\hspace{-0,34cm}\mu_M} \grmptb \gnl
\glmpt \gnot{\hspace{-0,34cm}\sigma} \grmptb \gcl{1} \gnl
\gvac{1} \gmu \gnl
\gob{4}{B}
\gend=
\gbeg{3}{6}
\got{1}{F} \got{1}{F} \got{1}{F} \gnl
\glmptb \gnot{\hspace{-0,34cm}\mu_M} \grmptb \gcl{1} \gnl
\gcl{1} \glmptb \gnot{\hspace{-0,34cm}\psi} \grmptb \gnl
\glmpt \gnot{\hspace{-0,34cm}\sigma} \grmptb \gcl{1} \gnl
\gvac{1} \gmu \gnl
\gob{4}{B}
\gend
\end{equation} & & 
\begin{equation}\eqlabel{normalized 2-cocycle}
\gbeg{3}{6}
\got{1}{} \got{3}{F}\gnl
\glmpb \gnot{\hspace{-0,34cm}\eta_M} \grmpb \gcl{1} \gnl
\gcl{1} \glmptb \gnot{\hspace{-0,34cm}\psi} \grmptb \gnl
\glmpt \gnot{\hspace{-0,34cm}\sigma} \grmptb \gcl{1} \gnl
\gvac{1} \gmu \gnl
\gob{4}{B}
\gend=
\gbeg{1}{4}
\got{1}{F}\gnl
\gcu{1} \gnl
\gu{1} \gnl
\gob{1}{B}
\gend=
\gbeg{3}{5}
\got{1}{F}\gnl
\gcl{1} \glmpb \gnot{\hspace{-0,34cm}\eta_M} \grmpb \gnl
\glmpt \gnot{\hspace{-0,34cm}\sigma} \grmptb \gcl{1} \gnl
\gvac{1} \gmu \gnl
\gob{4}{B}
\gend
\end{equation} 
\end{tabular}
\end{center} \vspace{-0,5cm}
$$ \textnormal{ \footnotesize 2-cocycle condition}  \hspace{5,5cm}  \textnormal{\footnotesize normalized 2-cocycle} $$ \vspace{-0,7cm}
$$ \textnormal{\footnotesize from monad law for $\mu_M$ \equref{monad law mu_M}} \hspace{4cm} \textnormal{\footnotesize from monad law for $\eta_M$ \equref{monad law eta_M}} $$

\begin{center} \hspace{3.5cm} 
\begin{tabular}{p{6cm}p{1.5cm}p{6.8cm}}
\begin{equation} \eqlabel{2-cycle ro'} 
\gbeg{3}{6}
\got{2}{F}\gnl
\gcmu \gvac{1} \gnl
\gcl{1} \glmptb \gnot{\hspace{-0,34cm}\rho'} \grmpb \gnl
\glmptb \gnot{\hspace{-0,34cm}\phi'} \grmptb \gcl{1} \gnl
\gcl{1} \glmptb \gnot{\hspace{-0,34cm}\Delta_C'} \grmptb \gnl
\gob{1}{B} \gob{1}{B} \gob{1}{B} 
\gend=
\gbeg{3}{5}
\got{2}{F}\gnl
\gcmu \gvac{1} \gnl
\gcl{1} \glmptb \gnot{\hspace{-0,34cm}\rho'} \grmpb \gnl
\glmptb \gnot{\hspace{-0,34cm}\Delta_C'} \grmptb \gcl{1} \gnl
\gob{1}{B} \gob{1}{B} \gob{1}{B} 
\gend
\end{equation} &  &
\begin{equation}\eqlabel{normalized 2-cycle ro'}
\gbeg{3}{6}
\got{2}{F}\gnl
\gcmu \gnl
\gcl{1} \glmptb \gnot{\hspace{-0,34cm}\rho'} \grmpb \gnl
\glmptb \gnot{\hspace{-0,34cm}\phi'} \grmptb \gcl{1} \gnl
\gcl{1} \glmpt \gnot{\hspace{-0,34cm}\Epsilon_C'} \grmpt \gnl
\gob{1}{B} 
\gend=
\gbeg{1}{4}
\got{1}{F} \gnl
\gcu{1} \gnl
\gu{1} \gnl
\gob{1}{B} 
\gend=
\gbeg{3}{5}
\got{2}{F}\gnl
\gcmu \gnl
\gcl{1} \glmptb \gnot{\hspace{-0,34cm}\rho'} \grmpb \gnl
\glmpt \gnot{\hspace{-0,34cm}\Epsilon_C'} \grmpt \gcl{1} \gnl
\gob{5}{B} \gnl
\gend
\end{equation}
\end{tabular}
\end{center} \vspace{-0,5cm}
$$ \textnormal{ \footnotesize 2-cycle condition for $\rho'$}  \hspace{5,5cm}  \textnormal{\footnotesize normalized 2-cycle $\rho'$} $$ \vspace{-0,7cm}
$$ \textnormal{\footnotesize from comonad law for $\Delta_C'$ \equref{comonad law Delta_C'}} \hspace{4cm} \textnormal{\footnotesize from comonad law for $\Epsilon_C'$ \equref{comonad law Epsilon_C'}} $$


\bigskip
 
Having seen the above properties in a paired wreath, we may combine them to get the following ones. 

\medskip

When we apply $\Epsilon_B$ to the 2-cocycle condition \equref{2-cocycle condition}, we get: 
\begin{equation} \eqlabel{epsilon to sigma} 
\gbeg{2}{5}
\got{1}{F} \got{1}{\hspace{-0,34cm}F} \got{1}{\hspace{-0,34cm}F} \gnl
\gcl{1} \hspace{-0,22cm} \gmu \gnl
\gvac{1} \hspace{-0,22cm} \glmpt \gnot{\hspace{-0,34cm}\sigma} \grmptb \gnl
\gvac{2} \gcu{1} \gnl
\gob{3}{}
\gend=
\gbeg{2}{5}
\got{1}{F} \got{1}{F} \got{1}{F} \gnl
\glmptb \gnot{\hspace{-0,34cm}\mu_M} \grmptb \gcl{1} \gnl
\gcl{1} \glm \gnl
\glmpt \gnot{\sigma} \gcmp \grmptb \gnl
\gvac{2} \gcu{1} \gnl
\gend
\qquad
\stackrel{FF
\gbeg{2}{2}
\gu{1} \gnl
\gob{1}{F}
\gend
}{\Rightarrow}
\gbeg{2}{6}
\got{1}{F} \got{1}{\hspace{-0,34cm}F}  \gnl
\gcl{2} \gcn{1}{1}{0}{0} \hspace{-0,22cm} \gu{1} \gnl
\gvac{1} \gmu \gnl
\gvac{1} \hspace{-0,22cm} \glmpt \gnot{\hspace{-0,34cm}\sigma} \grmptb \gnl
\gvac{2} \gcu{1} \gnl
\gob{3}{}
\gend=
\gbeg{2}{5}
\got{1}{F} \got{1}{F} \gnl
\glmptb \gnot{\hspace{-0,34cm}\mu_M} \grmptb \gu{1} \gnl
\gcl{1} \glm \gnl
\glmpt \gnot{\sigma} \gcmp \grmptb \gnl
\gvac{2} \gcu{1} \gnl
\gend\hspace{1cm}{\stackrel{\equref{weak unity eta_M}, \equref{mod alg unity}}{\Longleftrightarrow}}\hspace{0,7cm}
\gbeg{2}{3}
\got{1}{F} \got{1}{F} \gnl
\glmpt \gnot{\hspace{-0,34cm}\sigma} \grmptb \gnl
\gvac{1} \gcu{1} \gnl
\gend=
\gbeg{2}{7}
\got{1}{F} \got{1}{F} \gnl
\glmptb \gnot{\hspace{-0,34cm}\mu_M} \grmptb \gnl
\gcl{1} \gcu{1} \gnl
\gcl{1} \gu{1} \gnl
\glmpt \gnot{\hspace{-0,34cm}\sigma} \grmptb \gnl
\gvac{1} \gcu{1} \gnl
\gend\stackrel{*}{\stackrel{\equref{normalized 2-cocycle}}{=}}
\gbeg{2}{5}
\got{1}{F} \got{1}{F} \gnl
\gmu \gnl
\gvac{1} \hspace{-0,22cm} \gcu{1} \gnl
\gvac{1} \gu{1} \gnl
\gvac{1} \gcu{1} \gnl
\gend=
\gbeg{2}{3}
\got{1}{F} \got{1}{F} \gnl
\gcl{1} \gcl{1} \gnl
\gcu{1} \gcu{1} \gnl
\gend
\end{equation}
where at the place * the equality holds since we assume $\eta_M$ to be canonical, and in the last equality we applied the extreme right identities in \equref{(co)unital} and \equref{(co)unital mixed}. 
Dually to the above, from the 2-cycle condition \equref{2-cycle ro'} and assuming that $\Epsilon_C'$ is canonical we get: 
\begin{equation} \eqlabel{unit to ro'}
\gbeg{2}{3}
\gvac{1} \gu{1} \gnl
\glmpb \gnot{\hspace{-0,34cm}\rho'} \grmptb \gnl
\gob{1}{B} \gob{1}{B} \gnl
\gend=
\gbeg{2}{3}
\gu{1} \gu{1} \gnl
\gcl{1} \gcl{1} \gnl
\gob{1}{B} \gob{1}{B} \gnl
\gend
\end{equation}

\medskip

We say that a 2-cocycle $\sigma$ and a 2-cycle $\rho'$, respectively, is {\em trivial}, if the following corresponding identity in:
$$\sigma=
\gbeg{2}{4}
\got{1}{F} \got{1}{F} \gnl
\gcu{1} \gcu{1} \gnl
\gu{1} \gnl
\gob{1}{B}
\gend \hspace{0,1cm}, \quad
\rho'=
\gbeg{2}{4}
\got{1}{F} \gnl
\gcu{1} \gnl
\gu{1} \gu{1} \gnl
\gob{1}{B} \gob{1}{B}
\gend 
$$
holds.

\medskip

We now draw some consequences about the 2-cells lambda, from the point 5. of \deref{paired bl object}. When we apply $\Epsilon_B BB$ to \equref{proj B} and 
$FF\eta_F$ to \equref{proj F}, we get: 
\vspace{-0,7cm} 
\begin{center} \hspace{-1,3cm} 
\begin{tabular}{p{5cm}p{0cm}p{5cm}}
\begin{equation} \eqlabel{lambda B}
\lambda_B=
\gbeg{4}{7}
\got{1}{B} \got{3}{B} \gnl
\gcl{1} \gwcm{3} \gnl
\gcl{1} \grcm \gcl{3} \gnl
\glmptb \gnot{\hspace{-0,34cm}\tau_{B,B}} \grmptb \gcl{1} \gnl
\gcl{1} \grmo \gcl{1} \gnl
\gcl{1} \gwmu{3} \gnl
\gob{1}{B} \gob{3}{B}
\gend
\end{equation} & & 
\begin{equation} \eqlabel{lambda F}
\lambda_F=
\gbeg{4}{7}
\got{1}{} \got{1}{F} \got{3}{F} \gnl
\gwcm{3} \gcl{2} \gnl
\gcl{1} \grmo \gvac{1} \gcl{1} \gnl
\gcl{1} \gcl{1} \glmptb \gnot{\hspace{-0,34cm}\tau_{F,F}} \grmptb \gnl
\gcl{1} \glm \gcl{2} \gnl
\gwmu{3} \gnl
\gvac{1} \gob{1}{F} \gob{3}{F}
\gend
\end{equation} 
\end{tabular}
\end{center} \vspace{-0,7cm}
respectively. 
In the first equality we used \equref{B comod coalg counit}, \equref{weak counit Epsilon_C'}, left unital distributive law for $\tau_{F,B}$ and 
\equref{normalized 2-cocycle}. The second identity follows by $\pi$-symmetry. 
Having this, after applying (co)units of $B$ and $F$, respectively, at appropriate places, and by the same properties as above, we further obtain:
$$
\gbeg{3}{4}
\got{1}{B} \got{1}{B} \gnl
\glmptb \gnot{\hspace{-0,34cm}\lambda_B} \grmptb \gnl
\gcu{1} \gcl{1} \gnl
\gob{3}{B}
\gend=
\gbeg{2}{3}
\got{1}{B} \got{1}{B} \gnl
\gmu \gnl
\gob{2}{B}
\gend, 
\hspace{1cm}
\gbeg{2}{4}
\got{3}{B} \gnl
\gu{1} \gcl{1} \gnl
\glmptb \gnot{\hspace{-0,34cm}\lambda_B} \grmptb \gnl
\gob{1}{B} \gob{1}{B}
\gend=
\gbeg{2}{3}
\got{2}{B} \gnl
\gcmu \gnl
\gob{1}{B} \gob{1}{B}
\gend;
\hspace{1,5cm}
\gbeg{3}{4}
\got{1}{F} \got{1}{F} \gnl
\glmptb \gnot{\hspace{-0,34cm}\lambda_F} \grmptb \gnl
\gcl{1} \gcu{1} \gnl
\gob{1}{F}
\gend=
\gbeg{2}{3}
\got{1}{F} \got{1}{F} \gnl
\gmu \gnl
\gob{2}{F}
\gend,
\hspace{1cm}
\gbeg{2}{4}
\got{1}{F} \gnl
\gcl{1}\gu{1}  \gnl
\glmptb \gnot{\hspace{-0,34cm}\lambda_F} \grmptb \gnl
\gob{1}{F} \gob{1}{F}
\gend=
\gbeg{2}{3}
\got{2}{F} \gnl
\gcmu \gnl
\gob{1}{F} \gob{1}{F}
\gend
$$
and having in mind \equref{(co)mod (co)alg (co)unital}--\equref{right F-mod counits}: 
$$
\gbeg{2}{4}
\got{1}{B} \gnl
\gcl{1} \gu{1} \gnl
\glmptb \gnot{\hspace{-0,34cm}\lambda_B} \grmptb \gnl
\gob{1}{B} \gob{1}{B}
\gend=
\gbeg{2}{4}
\got{3}{B} \gnl
\gu{1} \gcl{1} \gnl
\gcl{1} \gcl{1} \gnl
\gob{1}{B} \gob{1}{B}
\gend,
\hspace{1cm}
\gbeg{2}{4}
\got{1}{B} \got{1}{B} \gnl
\glmptb \gnot{\hspace{-0,34cm}\lambda_B} \grmptb \gnl
\gcl{1} \gcu{1} \gnl
\gob{1}{B}
\gend=
\gbeg{2}{4}
\got{1}{B} \got{1}{B} \gnl
\gcl{1} \gcl{2} \gnl
\gcu{1} \gnl
\gob{3}{B}
\gend;
\hspace{1,5cm}
\gbeg{2}{4}
\got{3}{F} \gnl
\gu{1} \gcl{1} \gnl
\glmptb \gnot{\hspace{-0,34cm}\lambda_F} \grmptb \gnl
\gob{1}{F} \gob{1}{F}
\gend=
\gbeg{2}{4}
\got{1}{F} \gnl
\gcl{1} \gu{1} \gnl
\gcl{1} \gcl{1} \gnl
\gob{1}{F} \gob{1}{F}
\gend,
\hspace{1cm}
\gbeg{2}{4}
\got{1}{F} \got{1}{F} \gnl
\glmptb \gnot{\hspace{-0,34cm}\lambda_F} \grmptb \gnl
\gcu{1} \gcl{1} \gnl
\gob{3}{F}
\gend=
\gbeg{2}{4}
\got{1}{F} \got{1}{F} \gnl
\gcl{2}  \gcu{1} \gnl
\gob{1}{F}
\gend
$$
We also may write now $\mu_M$ and $\Delta_C'$ as follows: 
\begin{equation} \eqlabel{mu-delta-final}
\mu_M=
\gbeg{5}{7}
\got{1}{} \got{1}{F} \got{4}{F} \gnl
\gwcm{3} \gcmu \gnl
\gcl{1} \grmo \gvac{1} \gcl{1} \gcl{1} \gcl{3} \gnl
\gcl{1} \gcl{1} \glmptb \gnot{\hspace{-0,34cm}\tau_{F,F}} \grmptb \gnl
\gcl{1} \glm \gcl{1} \gnl
\gwmu{3} \glmpt \gnot{\hspace{-0,34cm}\sigma} \grmptb \gnl
\gvac{1} \gob{1}{F} \gob{5}{B}
\gend
\hspace{2cm}
\Delta_C'=
\gbeg{5}{7}
\got{1}{F} \got{5}{B} \gnl
\glmptb \gnot{\hspace{-0,34cm}\rho'} \grmpb \gwcm{3} \gnl
\gcl{1} \gcl{1} \grcm \gcl{3} \gnl
\gcl{1} \glmptb \gnot{\hspace{-0,34cm}\tau_{B,B}} \grmptb \gcl{1} \gnl
\gcl{1} \gcl{1} \grmo \gcl{1} \gnl
\gmu \gwmu{3} \gnl
\gob{2}{B}  \gob{3}{B}
\gend
\end{equation}

\bigskip

Finally, let us see what we get from knowing that in a paired wreath $FB$ is a $\tau$-bimonad. Recall that the multiplication $\nabla_{FB}$ and comultiplication 
$\Delta_{FB}$ of $FB$ are given by \equref{wreath (co)product}. The first three identities in \deref{tau-bim} applied to $FB$ are already fulfilled by the first four points 
from the definition of a paired wreath, concretely, by \equref{(co)unital}, \equref{psi unital add}, \equref{epsilon to sigma} and \equref{unit to ro'}. 
If we apply 
$\eta_F BF\eta_B, F\eta_B F\eta_B$ and $\eta_F B\eta_F B$ 
to $\nabla_{FB}$, we obtain: $\psi, \mu_M$ 
and $\gbeg{3}{3}
\got{1}{} \got{1}{B} \got{1}{B} \gnl
\gu{1} \gmu \gnl
\gob{1}{F} \gob{2}{B}
\gend $, respectively. By $\pi$-symmetry, when we apply 
$\Epsilon_F BF\Epsilon_F, \Epsilon_F B\Epsilon_FB$ and $F\Epsilon_B F\Epsilon_B$ 
to $\Delta_{FB}$, we obtain: $\phi', \Delta_C$ 
and $\gbeg{3}{3}
\got{2}{F} \got{1}{B} \gnl
\gcmu \gcu{1} \gnl
\gob{1}{F} \gob{1}{F}
\gend$, respectively. We apply the $3\times 3$ combinations of these operations to: 
\begin{equation}  \eqlabel{paired biwreath biproduct} \hspace{-4cm} 
\gbeg{7}{8}
\gvac{2} \got{1}{F} \got{1}{B} \got{1}{F} \got{3}{B}  \gnl
\gvac{2} \gcl{1}  \glmptb \gnot{\hspace{-0,34cm}\psi} \grmptb \gvac{1} \gcl{1} \gnl
\gvac{2} \glmptb \gnot{\hspace{-0,34cm}\mu_M} \grmptb \gwmu{3} \gnl
\gvac{2} \gcl{1} \gwmu{3} \gnl
\gvac{1} \gwcm{3} \gcl{1} \gnl
\gwcm{3} \glmptb \gnot{\hspace{-0,34cm}\Delta_C'} \grmptb \gnl
\gcl{1} \gvac{1} \glmptb \gnot{\hspace{-0,34cm}\phi'} \grmptb \gcl{1} \gnl
\gob{1}{F} \gvac{1} \gob{1}{B} \got{1}{F} \got{1}{B}
\gend=
\gbeg{3}{11}
\gvac{2} \got{1}{F} \gvac{1} \got{1}{B} \gvac{2} \got{1}{F} \got{3}{B}  \gnl
\gvac{1} \gwcm{3} \gcl{1} \gvac{1} \gwcm{3} \gcl{1} \gnl
\gwcm{3} \glmptb \gnot{\hspace{-0,34cm}\Delta_C'} \grmptb \gwcm{3} \glmptb \gnot{\hspace{-0,34cm}\Delta_C'} \grmptb \gnl
\gcl{1} \gvac{1} \glmptb \gnot{\hspace{-0,34cm}\phi'} \grmptb \glmptb \gnot{\hspace{-0,34cm}\tau_{B,F}} \grmptb \gvac{1} \glmptb \gnot{\hspace{-0,34cm}\phi'} \grmptb \gcl{5} \gnl
\gcl{1} \gvac{1} \gcn{1}{2}{1}{-1} \gcl{1} \gcl{1} \gcl{1} \gvac{1} \gcn{1}{1}{1}{-1} \gcl{1}  \gnl 
\gcl{1} \gvac{2} \glmptb \gnot{\hspace{-0,34cm}\tau_{F,F}} \grmptb  \glmptb \gnot{\hspace{-0,34cm}\tau_{B,B}} \grmptb \gvac{1} \gcn{1}{2}{1}{-1} \gnl %
\gcl{1} \gcl{1} \gcn{2}{1}{3}{1}  \gcl{1} \gcl{1} \gcl{1} \gnl 
\gcl{1} \glmptb \gnot{\hspace{-0,34cm}\psi} \grmptb \gvac{1} \glmptb \gnot{\hspace{-0,34cm}\tau_{F,B}} \grmptb \glmptb \gnot{\hspace{-0,34cm}\psi} \grmptb \gnl
\glmptb \gnot{\hspace{-0,34cm}\mu_M} \grmptb \gwmu{3} \glmptb \gnot{\hspace{-0,34cm}\mu_M} \grmptb \gwmu{3} \gnl
\gcl{1} \gwmu{3} \gvac{1} \gcl{1} \gwmu{3} \gnl
\gob{1}{F} \gvac{1} \gob{1}{B} \gvac{2} \got{1}{F} \got{3}{B}
\gend 
\end{equation} 
and we obtain 8 identities (one of them yields a trivial identity). Applying respectively the operations: 
$$(\Epsilon_F BF\Epsilon_B)(-)(\eta_F BF\eta_B), \qquad (\Epsilon_F B\Epsilon_FB)(-)(\eta_F BF\eta_B), \qquad (F\Epsilon_B F\Epsilon_B)(-)(\eta_F BF\eta_B),$$ 
$$(\Epsilon_F BF\Epsilon_F)(-)(F\eta_B F\eta_B), \qquad (\Epsilon_F B\Epsilon_FB)(-)(F\eta_B F\eta_B), \qquad (\Epsilon_F BF\Epsilon_F)(-)(\eta_F B\eta_F B), $$
$$(\Epsilon_F B\Epsilon_FB)(-)(\eta_F B\eta_F B), \qquad (F\Epsilon_B F\Epsilon_B)(-)(F\eta_B F\eta_B)$$
we get: 
\begin{equation} \eqlabel{1-3}
\gbeg{2}{4}
\got{1}{B} \got{1}{F} \gnl
\glmptb \gnot{\hspace{-0,34cm}\psi} \grmptb \gnl
\glmptb \gnot{\hspace{-0,34cm}\phi'} \grmptb \gnl
\gob{1}{B} \gob{1}{F} 
\gend=
\gbeg{6}{7}
\gvac{1} \got{1}{F} \got{5}{B} \gnl
\gwcm{3} \gwcm{3} \gnl
\grcm \glmptb \gnot{\hspace{-0,34cm}\tau_{F,B}} \grmptb \grmo \gvac{1} \gcl{1} \gnl
\gcl{1} \glmptb \gnot{\hspace{-0,34cm}\tau_{B,B}} \grmptb \glmptb \gnot{\hspace{-0,34cm}\tau_{B,F}} \grmptb \gcl{1} \gnl %
\grmo \gcl{1} \gvac{1} \glmptb \gnot{\hspace{-0,34cm}\tau_{B,F}} \grmptb \glm \gnl
\gwmu{3} \gwmu{3} \gnl
\gvac{1} \gob{1}{B} \gvac{2} \gob{1}{F} 
\gend, 
\hspace{1cm}
\gbeg{2}{4}
\got{1}{B} \got{1}{F} \gnl
\glmptb \gnot{\hspace{-0,34cm}\psi} \grmptb \gnl
\glmptb \gnot{\hspace{-0,34cm}\Delta_C'} \grmptb \gnl
\gob{1}{B} \gob{1}{B} 
\gend=
\gbeg{8}{10}
\gvac{1} \got{1}{B} \gvac{3} \got{1}{F} \gnl
\gvac{1} \gcl{1} \gvac{2} \gwcm{3} \gnl
\gwcm{3} \gwcm{3} \glmptb \gnot{\hspace{-0,34cm}\ro'} \grmpb \gnl
\grcm \glmptb \gnot{\hspace{-0,34cm}\tau_{B,F}} \grmptb \gvac{1} \glmptb \gnot{\hspace{-0,34cm}\phi'} \grmptb \gcl{4} \gnl
\gcl{1} \gcl{1} \gcl{1} \gcl{1} \gvac{1} \gcn{1}{1}{1}{-1} \gcn{1}{2}{1}{-1} \gnl 
\gcl{1} \glmptb \gnot{\hspace{-0,34cm}\tau_{F,F}} \grmptb  \glmptb \gnot{\hspace{-0,34cm}\tau_{B,B}} \grmptb \gnl %
\grmo \gcl{1} \gvac{1} \glmptb \gnot{\hspace{-0,34cm}\tau_{F,B}} \grmptb \glmptb \gnot{\hspace{-0,34cm}\psi} \grmptb \gnl
\gwmu{3} \glmpt \gnot{\hspace{-0,34cm}\sigma} \grmptb \gwmu{3} \gnl
\gvac{1} \gcl{1} \gvac{2} \gwmu{3} \gnl
\gvac{1} \gob{1}{B} \gvac{2} \gob{3}{B}
\gend, 
\hspace{1cm}
\gbeg{2}{4}
\got{1}{B} \got{1}{F} \gnl
\glm \gnl
\gvac{1} \hspace{-0,22cm} \gcmu \gnl
\gvac{1} \gob{1}{F} \gob{1}{F} 
\gend=
\gbeg{4}{6}
\gvac{1} \got{1}{B} \got{4}{F} \gnl
\gwcm{3} \gcmu \gnl
\grcm \glmptb \gnot{\hspace{-0,34cm}\tau_{B,F}} \grmptb \gcl{1} \gnl
\gcl{1} \glmptb \gnot{\hspace{-0,34cm}\tau_{F,F}} \grmptb \glm \gnl 
\glm \gwmu{3} \gnl
\gvac{1} \gob{1}{F} \gvac{1} \gob{1}{F} 
\gend
\end{equation}

\begin{equation} \eqlabel{4-6}
\gbeg{2}{4}
\got{1}{F} \got{1}{F} \gnl
\glmptb \gnot{\hspace{-0,34cm}\mu_M} \grmptb \gnl
\glmptb \gnot{\hspace{-0,34cm}\phi'} \grmptb \gnl
\gob{1}{B} \gob{1}{F} 
\gend=
\gbeg{8}{10}
\gvac{2} \got{1}{F} \gvac{3}  \got{1}{F} \gnl
\gvac{1} \gwcm{3} \gvac{2} \gcl{1}  \gnl
\gwcm{3} \glmptb \gnot{\hspace{-0,34cm}\ro'} \grmpb \gwcm{3}  \gnl
\gcl{1} \gvac{1} \glmptb \gnot{\hspace{-0,34cm}\phi'} \grmptb \glmptb \gnot{\hspace{-0,34cm}\tau_{B,F}} \grmptb \grmo \gvac{1} \gcl{1} \gnl
\gcl{1} \gvac{1} \gcl{1} \glmptb \gnot{\hspace{-0,34cm}\tau_{F,F}} \grmptb  \glmptb \gnot{\hspace{-0,34cm}\tau_{B,B}} \grmptb \gcl{1} \gnl %
\gcn{1}{1}{1}{3} \gvac{1} \glmptb \gnot{\hspace{-0,34cm}\psi} \grmptb \glmptb \gnot{\hspace{-0,34cm}\tau_{F,B}} \grmptb \glm \gnl
\gvac{1} \glmpt \gnot{\hspace{-0,34cm}\sigma} \grmptb \gmu \gwmu{3} \gnl
\gvac{2} \gcl{1} \gcn{1}{1}{2}{1} \gvac{2} \gcl{2} \gnl
\gvac{2}  \gmu \gnl
\gvac{2}  \gob{2}{B} \gvac{2}  \gob{1}{F} 
\gend, 
\hspace{0,5cm}
\gbeg{2}{4}
\got{1}{F} \got{1}{F} \gnl
\glmptb \gnot{\hspace{-0,34cm}\mu_M} \grmptb \gnl
\glmptb \gnot{\hspace{-0,34cm}\Delta_C'} \grmptb \gnl
\gob{1}{B} \gob{1}{B} 
\gend=
\gbeg{10}{11}
\gvac{2} \got{1}{F} \gvac{4}  \got{1}{F} \gnl
\gvac{1} \gwcm{3} \gvac{2} \gwcm{3} \gnl
\gwcm{3} \glmptb \gnot{\hspace{-0,34cm}\ro'} \grmpb \gwcm{3} \glmptb \gnot{\hspace{-0,34cm}\ro'} \grmpb \gnl
\gcl{1} \gvac{1} \glmptb \gnot{\hspace{-0,34cm}\phi'} \grmptb \glmptb \gnot{\hspace{-0,34cm}\tau_{B,F}} \grmptb \gvac{1} \glmptb \gnot{\hspace{-0,34cm}\phi'} \grmptb \gcl{4} \gnl
\gcl{1} \gvac{1} \gcl{2} \gcl{1} \gcl{1} \gcl{1} \gvac{1} \gcn{1}{1}{1}{-1} \gcn{1}{2}{1}{-1}  \gnl 
\gcl{1} \gvac{2} \glmptb \gnot{\hspace{-0,34cm}\tau_{F,F}} \grmptb  \glmptb \gnot{\hspace{-0,34cm}\tau_{B,B}} \grmptb \gnl %
\gcn{1}{1}{1}{3} \gvac{1} \glmptb \gnot{\hspace{-0,34cm}\psi} \grmptb \glmptb \gnot{\hspace{-0,34cm}\tau_{F,B}} \grmptb \glmptb \gnot{\hspace{-0,34cm}\psi} \grmptb \gnl
\gvac{1} \glmpt \gnot{\hspace{-0,34cm}\sigma} \grmptb \gmu \glmpt \gnot{\hspace{-0,34cm}\sigma} \grmptb \gwmu{3} \gnl
\gvac{2} \gcl{1} \gcn{1}{1}{2}{1} \gvac{2} \gwmu{3} \gnl
\gvac{2}  \gmu \gvac{3} \gcl{1} \gnl
\gvac{2}  \gob{2}{B} \gvac{3}  \gob{1}{B} 
\gend, 
\hspace{0,2cm}
\gbeg{2}{4}
\got{1}{B} \got{1}{B} \gnl
\gmu \gnl
\gvac{1} \hspace{-0,22cm} \grcm \gnl
\gvac{1} \gob{1}{B} \gob{1}{F} 
\gend=
\gbeg{4}{6}
\gvac{1} \got{1}{B} \got{3}{B} \gnl
\gwcm{3} \grcm \gnl
\grcm \glmptb \gnot{\hspace{-0,34cm}\tau_{B,B}} \grmptb \gcl{1} \gnl
\gcl{1} \glmptb \gnot{\hspace{-0,34cm}\tau_{F,B}} \grmptb \glm \gnl 
\gmu \gwmu{3} \gnl
\gob{2}{F} \gvac{1} \gob{1}{F} 
\gend
\end{equation}

\begin{equation} \eqlabel{proj B-bialg}
\gbeg{3}{5}
\got{1}{B} \got{3}{B} \gnl
\gwmu{3} \gnl
\gvac{1} \gcl{1} \gnl
\gwcm{3} \gnl
\gob{1}{B} \gob{3}{B} 
\gend=
\gbeg{6}{9}
\gvac{1} \got{1}{B} \got{5}{B} \gnl
\gwcm{3} \gwcm{3} \gnl
\grcm \gcl{1} \grcm \gcl{3} \gnl
\gcl{1} \gcl{1} \glmptb \gnot{\hspace{-0,34cm}\tau_{B,B}} \grmptb \gcl{1} \gnl
\gcl{1} \glmptb \gnot{\hspace{-0,34cm}\tau_{F,B}} \grmptb \glmptb \gnot{\hspace{-0,34cm}\psi} \grmptb \gnl 
\gmu \glmpt \gnot{\hspace{-0,34cm}\sigma} \grmptb \gmu \gnl
\gcn{1}{2}{2}{2} \gvac{2} \gcl{1} \gcn{1}{1}{2}{1} \gnl
\gvac{3} \gmu \gnl
\gob{2}{F} \gob{4}{F} 
\gend
\stackrel{\equref{proj B}}{=}
\gbeg{3}{5}
\got{2}{B} \got{1}{B} \gnl
\gcmu \gcl{1} \gnl
\gcl{1} \glmptb \gnot{\hspace{-0,34cm}\lambda_B} \grmptb \gnl
\gmu \gcl{1} \gnl
\gob{2}{B}  \gob{1}{B}
\gend\stackrel{\equref{lambda B}}{=}
\gbeg{4}{7}
\gvac{1} \got{1}{B} \got{5}{B} \gnl
\gwcm{3} \gwcm{3} \gnl
\gcl{1} \gvac{1} \gcl{1} \grcm \gcl{3} \gnl
\gcl{1} \gvac{1} \glmptb \gnot{\hspace{-0,34cm}\tau_{B,B}} \grmptb \gcl{1} \gnl
\gcl{1} \gvac{1} \gcl{1} \grmo \gcl{1} \gvac{1} \gnl
\gwmu{3} \gwmu{3} \gnl
\gvac{1} \gob{1}{B}  \gob{5}{B}
\gend
\end{equation}

\begin{equation} \eqlabel{proj F-bialg}
\gbeg{3}{5}
\got{1}{F} \got{3}{F} \gnl
\gwmu{3} \gnl
\gvac{1} \gcl{1} \gnl
\gwcm{3} \gnl
\gob{1}{F} \gob{3}{F} 
\gend=
\gbeg{6}{9}
\gvac{1} \got{2}{F} \gvac{1} \got{2}{F} \gnl
\gvac{1} \gcmu \gvac{1} \gcn{1}{1}{2}{2} \gnl
\gcn{2}{1}{3}{2} \gcl{1} \gvac{1} \gcmu \gnl
\gcmu \glmptb \gnot{\hspace{-0,34cm}\ro'} \grmpb \gcl{1} \gcl{1} \gnl
\gcl{3} \glmptb \gnot{\hspace{-0,34cm}\phi'} \grmptb \glmptb \gnot{\hspace{-0,34cm}\tau_{B,F}} \grmptb \gcl{1}  \gnl %
\gvac{1} \gcl{1} \glmptb \gnot{\hspace{-0,34cm}\tau_{F,F}} \grmptb \glm \gnl
\gvac{1} \glm \gwmu{3} \gnl
\gwmu{3} \gcn{1}{1}{3}{3} \gnl
\gvac{1} \gob{1}{F} \gvac{2} \gob{1}{F} 
\gend 
\stackrel{\equref{proj F}}{=}
\gbeg{3}{5}
\got{1}{F} \got{2}{F} \gnl
\gcl{1} \gcmu \gnl
\glmptb \gnot{\hspace{-0,34cm}\lambda_F} \grmptb  \gcl{1} \gnl
\gcl{1} \gmu \gnl
\gob{1}{F}  \gob{2}{F}
\gend\stackrel{\equref{lambda F}}{=}
\gbeg{4}{7}
\got{1}{} \got{1}{F} \got{5}{F} \gnl
\gwcm{3} \gwcm{3} \gnl
\gcl{1} \grmo \gvac{1} \gcl{1} \gcl{1} \gvac{1} \gcl{3} \gnl
\gcl{1} \gcl{1} \glmptb \gnot{\hspace{-0,34cm}\tau_{F,F}} \grmptb \gnl
\gcl{1} \glm \gcl{1} \gnl
\gwmu{3} \gwmu{3} \gnl
\gvac{1} \gob{1}{F} \gob{5}{F}
\gend
\end{equation}

\section{Hopf data}

We are going to collect the following properties of a paired wreath: 
points 1. and 2. of \deref{paired bl object}, 
the identities: \equref{mod alg} -- \equref{unit to ro'} and \equref{1-3} -- \equref{proj F-bialg} with the corresponding 2-cells, and the thesis of \coref{(co)actions},
and give a name to that collection of data.

\begin{defn}
A Hopf datum in $\K$ consists of the following data: 
\begin{enumerate}
\item a monad $(B, 
\gbeg{2}{3}
\got{1}{B} \got{1}{B} \gnl
\gmu \gnl
\gob{2}{B}
\gend, 
\gbeg{1}{2}
\gu{1} \gnl
\gob{1}{B}
\gend)$, a comonad $(F, 
\gbeg{2}{3}
\got{2}{F} \gnl
\gcmu \gnl
\gob{1}{F} \gob{1}{F}
\gend, 
\gbeg{1}{2}
\got{1}{F} \gnl
\gcu{1} \gnl
\gend)$ both over the same 0-cell $\A$ in $\K$, with 2-cells 
$\gbeg{2}{3}
\got{2}{B} \gnl
\gcmu \gnl
\gob{1}{B} \gob{1}{B}
\gend, 
\gbeg{1}{2}
\got{1}{B} \gnl
\gcu{1} \gnl
\gend$ and 
$\gbeg{2}{3}
\got{1}{F} \got{1}{F} \gnl
\gmu \gnl
\gob{2}{F}
\gend,  
\gbeg{1}{2}
\gu{1} \gnl
\gob{1}{F} \gnl
\gend$ in $\K$, satisfying the compatibility conditions:
$$
\gbeg{2}{3}
\got{1}{B} \got{1}{B} \gnl
\gmu \gnl
\gvac{1} \hspace{-0,22cm} \gcu{1} \gnl
\gend=
\gbeg{2}{3}
\got{1}{B} \got{1}{B} \gnl
\gcl{1} \gcl{1} \gnl
\gcu{1} \gcu{1} \gnl
\gend, \hspace{0,5cm}
\gbeg{1}{3}
 \gu{1} \gnl
\hspace{-0,34cm} \gcmu \gnl
\gob{1}{B} \gob{1}{B} \gnl
\gend=
\gbeg{2}{3}
\gu{1} \gu{1} \gnl
\gcl{1} \gcl{1} \gnl
\gob{1}{B} \gob{1}{B} \gnl
\gend, \hspace{0,2cm}
\gbeg{1}{2}
\gu{1} \gnl
\gcu{1} \gnl
\gob{2}{} \gnl
\gend B=
\Id_{id_{\A}},
\hspace{0,7cm}
\gbeg{1}{3}
 \gu{1} \gnl
\hspace{-0,34cm} \gcmu \gnl
\gob{1}{F} \gob{1}{F} \gnl
\gend=
\gbeg{2}{3}
\gu{1} \gu{1} \gnl
\gcl{1} \gcl{1} \gnl
\gob{1}{F} \gob{1}{F}
\gend, \hspace{0,2cm}
\gbeg{2}{3}
\got{1}{F} \got{1}{F} \gnl
\gmu \gnl
\gvac{1} \hspace{-0,22cm} \gcu{1} \gnl
\gend=
\gbeg{2}{3}
\got{1}{F} \got{1}{F} \gnl
\gcl{1} \gcl{1} \gnl
\gcu{1} \gcu{1} \gnl
\gend, \hspace{0,2cm}
\gbeg{1}{2}
\gu{1} \gnl
\gcu{1} \gnl
\gob{2}{} \gnl
\gend F=
\Id_{id_{\A}};
$$
\item further 2-cells: 
\begin{equation} \eqlabel{(co)module cells}
\gbeg{2}{3}
\got{1}{B} \got{1}{F} \gnl
\glm \gnl
\gob{3}{F}
\gend, \hspace{0,6cm}
\gbeg{2}{3}
\got{1}{B} \gnl
\grcm \gnl
\gob{1}{B} \gob{1}{F}
\gend, \hspace{0,6cm}
\gbeg{2}{3}
\got{1}{B} \got{1}{F} \gnl
\grmo \gcl{1} \gnl 
\gob{1}{B}
\gend, \hspace{0,6cm}
\gbeg{2}{3}
\got{3}{F} \gnl
\grmo \gvac{1} \gcl{1} \gnl
\gob{1}{B} \gob{1}{F}
\gend
\end{equation}
so that 
$\gbeg{2}{1}
\glm \gnl
\gend$ makes $F$ a proper left $B$-module and 
$\gbeg{2}{1}
\grcm \gnl
\gend$ makes $B$ a proper right $F$-comodule, and the following relations are fulfilled: 
$$
\gbeg{2}{3}
\got{1}{B} \got{1}{F} \gnl
\glm \gnl
\gvac{1} \gcu{1}
\gend=
\gbeg{2}{3}
\got{1}{B} \got{1}{F} \gnl
\gcl{1} \gcl{1} \gnl
\gcu{1} \gcu{1} \gnl
\gend,
\hspace{0,8cm}
\gbeg{2}{3}
\gu{1} \gnl
\grcm \gnl
\gob{1}{B} \gob{1}{F}
\gend=
\gbeg{2}{3}
\gu{1} \gu{1} \gnl
\gcl{1} \gcl{1} \gnl
\gob{1}{B} \gob{1}{F}
\gend,
\hspace{1,4cm}
\gbeg{2}{3}
\got{1}{B} \got{1}{F} \gnl
\grmo \gcl{1} \gnl 
\gcu{1}
\gend=
\gbeg{2}{3}
\got{1}{B} \got{1}{F} \gnl
\gcl{1} \gcl{1} \gnl
\gcu{1} \gcu{1} \gnl
\gend,
\hspace{0,8cm}
\gbeg{2}{3}
\gvac{1} \gu{1} \gnl
\grmo \gvac{1} \gcl{1} \gnl
\gob{1}{B} \gob{1}{F}
\gend=
\gbeg{2}{3}
\gu{1} \gu{1} \gnl
\gcl{1} \gcl{1} \gnl
\gob{1}{B} \gob{1}{F}
\gend
$$
\item 2-cells: 
$$
\gbeg{2}{3}
\got{1}{F} \got{1}{F} \gnl
\glmpt \gnot{\hspace{-0,34cm}\sigma} \grmptb \gnl
\gob{3}{B}
\gend \hspace{0,2cm}, \hspace{0,6cm}
\gbeg{3}{3}
\got{3}{F} \gnl
\glmpb \gnot{\hspace{-0,34cm}\rho'} \grmptb \gnl
\gob{1}{B}\gob{1}{B}
\gend
$$
and 
$$\tau_{B,F}:BF\to FB, \tau_{F,B}:FB\to BF, \tau_{B,B}:BB\to BB, \tau_{F,F}:FF\to FF$$
where $\tau_{B,F}, \tau_{F,F}, \tau_{F,F}, \tau_{F,B}$ are left and right monadic and comonadic distributive laws, where the adjectives ``comonadic'' corresponding to $B$ 
and ``monadic'' corresponding to $F$ are meant with respect to the 2-cells from the point 1.;  
\item so that setting: 
$$
\psi=
\gbeg{4}{5}
\got{2}{B} \got{2}{F} \gnl
\gcmu \gcmu \gnl
\gcl{1} \glmptb \gnot{\hspace{-0,34cm}\tau_{B,F}} \grmptb \gcl{1} \gnl
\glm \grmo \gcl{1} \gnl
\gob{1}{} \gob{1}{F} \gob{1}{B} 
\gend, 
\hspace{0,7cm}
\phi'= 
\gbeg{4}{5}
\got{1}{} \got{1}{F} \got{1}{B} \gnl
\grmo \gvac{1} \gcl{1} \grcm \gnl
\gcl{1} \glmptb \gnot{\hspace{-0,34cm}\tau_{F,B}} \grmptb \gcl{1} \gnl
\gmu \gmu \gnl
\gob{2}{B} \gob{2}{F} 
\gend,
\hspace{0,7cm}
\mu_M=
\gbeg{5}{7}
\got{1}{} \got{1}{F} \got{4}{F} \gnl
\gwcm{3} \gcmu \gnl
\gcl{1} \glcm \gcl{1} \gcl{3} \gnl
\gcl{1} \gcl{1} \glmptb \gnot{\hspace{-0,34cm}\tau_{F,F}} \grmptb \gnl
\gcl{1} \glm \gcl{1} \gnl
\gwmu{3} \glmpt \gnot{\hspace{-0,34cm}\sigma} \grmptb \gnl
\gvac{1} \gob{1}{F} \gob{5}{B}
\gend,
\hspace{1cm}
\Delta_C'=
\gbeg{5}{7}
\got{1}{F} \got{5}{B} \gnl
\glmptb \gnot{\hspace{-0,34cm}\rho'} \grmpb \gwcm{3} \gnl
\gcl{1} \gcl{1} \grcm \gcl{3} \gnl
\gcl{1} \glmptb \gnot{\hspace{-0,34cm}\tau_{B,B}} \grmptb \gcl{1} \gnl
\gcl{1} \gcl{1} \grm \gnl
\gmu \gwmu{3} \gnl
\gob{2}{B}  \gob{3}{B}
\gend
$$
the identities \equref{mod alg} -- \equref{unit to ro'} and \equref{1-3} -- \equref{proj F-bialg} hold true. 
\end{enumerate}
\end{defn}

\medskip

\begin{cor} \colabel{Hd conseq}
In a Hopf datum the following identities hold true:
$$
\gbeg{2}{4}
\got{1}{B} \got{1}{F} \gnl
\glmptb \gnot{\hspace{-0,34cm}\psi} \grmptb \gnl
\gcl{1} \gcu{1} \gnl
\gob{1}{F}
\gend=
\gbeg{2}{3}
\got{1}{B} \got{1}{F} \gnl
\glm \gnl
\gob{3}{F}
\gend
\hspace{1,5cm}
\gbeg{2}{4}
\got{1}{B} \got{1}{F} \gnl
\glmptb \gnot{\hspace{-0,34cm}\psi} \grmptb \gnl
\gcu{1} \gcl{1} \gnl
\gob{3}{B} 
\gend=
\gbeg{2}{3}
\got{1}{B} \got{1}{F} \gnl
\grmo \gcl{1} \gnl 
\gob{1}{B}
\gend
\hspace{1,5cm}
\gbeg{2}{4}
\got{1}{F} \gnl
\gcl{1} \gu{1} \gnl
\glmptb \gnot{\hspace{-0,34cm}\phi'} \grmptb \gnl
\gob{1}{B} \gob{1}{F}
\gend=
\gbeg{2}{3}
\got{3}{F} \gnl
\grmo \gvac{1} \gcl{1} \gnl
\gob{1}{B} \gob{1}{F}
\gend
\hspace{1,5cm}
\gbeg{2}{4}
\got{3}{B} \gnl
\gu{1} \gcl{1} \gnl
\glmptb \gnot{\hspace{-0,34cm}\phi'} \grmptb \gnl
\gob{1}{B} \gob{1}{F}
\gend=
\gbeg{2}{3}
\got{1}{B} \gnl
\grcm \gnl
\gob{1}{B} \gob{1}{F}
\gend
$$

$$
\gbeg{2}{4}
\got{3}{F} \gnl
\gu{1} \gcl{1} \gnl
\glmptb \gnot{\hspace{-0,34cm}\psi} \grmptb \gnl
\gob{1}{F} \gob{1}{B}
\gend=
\gbeg{2}{4}
\got{1}{F} \gnl
\gcl{1} \gu{1} \gnl
\gcl{1} \gcl{1} \gnl
\gob{1}{F} \gob{1}{B}
\gend
\hspace{1,5cm}
\gbeg{2}{4}
\got{1}{F} \got{1}{B} \gnl
\glmptb \gnot{\hspace{-0,34cm}\phi'} \grmptb \gnl
\gcl{1} \gcu{1} \gnl
\gob{1}{B}
\gend=
\gbeg{2}{4}
\got{1}{F} \got{1}{B} \gnl
\gcu{1} \gcl{2} \gnl
\gob{3}{B}
\gend
$$
\end{cor}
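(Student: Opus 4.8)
\proof
The plan is to verify each of the six identities by one and the same device: replace $\psi$, respectively $\phi'$, by the defining expression from the fourth point of the definition, pre- or post-compose with the (co)unit prescribed in the statement, and then collapse the resulting string diagram one strand at a time. Since the defining data is auto $\pi$-symmetric, the six identities come in three $\pi$-symmetric pairs; I would prove the three involving $\psi$ and obtain the remaining three by $\pi$-symmetry.

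For the two identities recovering the \emph{actions}, take the first, $(\mathrm{id}_F\ot\Epsilon_B)\psi=$ (left $B$-action). Applying $\Epsilon_B$ to the $B$-output of $\psi$ lands it on the right $F$-action cell $BF\to B$, so its counit relation $\Epsilon_B\ot\Epsilon_F$ from the second point applies; the emerging $\Epsilon_F$ is absorbed by counitality of the comultiplication $\Delta_F$ of the comonad $F$, the surviving $\Epsilon_B$ is pushed through $\tau_{B,F}$ by its comonadic (in $B$) counit axiom from \deref{distr}, and a final use of the weak counitality of the pre-comultiplication $\Delta_B$ leaves exactly the left action \equref{(co)module cells}. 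The second identity, $(\Epsilon_F\ot\mathrm{id}_B)\psi=$ (right $F$-action), is the mirror computation, now using the counit relation $\Epsilon_B\ot\Epsilon_F$ of the left action, the comonadic (in $F$) counit axiom of $\tau_{B,F}$, and counitality of $\Delta_B$ and $\Delta_F$ in the appropriate legs.

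For the \emph{triviality} identity $\psi(\eta_B\ot\mathrm{id}_F)=\mathrm{id}_F\ot\eta_B$ the unit axioms do the analogous work: the compatibility $\Delta_B\eta_B=\eta_B\ot\eta_B$ from the first point splits the inserted unit, the monadic (in $B$) unit axiom of $\tau_{B,F}$ carries one copy across, and unitality of the left $B$-module trivialises that leg. The only genuinely new ingredient is that the other leg is the right $F$-action evaluated on $\eta_B$, which is \emph{not} governed by the second point; here one invokes \equref{F mod alg unit} to rewrite it as $\eta_B\Epsilon_F$, after which counitality of $\Delta_F$ finishes. The $\pi$-symmetric statement $(\mathrm{id}_B\ot\Epsilon_F)\phi'=\Epsilon_F\ot\mathrm{id}_B$ is handled identically, importing \equref{B comod coalg counit} and the counitality of the proper right $F$-comodule $B$ from the second point.

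The main obstacle is not any single computation but the fact that the extra structures $\Delta_B,\Epsilon_B$ and $\mu_F,\eta_F$ are only \emph{weakly}, i.e.\ one-sidedly, (co)unital, this being exactly the content of \equref{weak counit Epsilon_C'} and its $\pi$-dual \equref{weak unity eta_M}. Hence one cannot collapse the two legs of a pre-(co)multiplication symmetrically; the spurious (co)unit must be shuttled onto the side that \emph{is} (co)unital, and keeping track of which (co)unit axiom of which $\tau$ (each being simultaneously monadic and comonadic on both legs) routes it there is where the bookkeeping concentrates. Once the correct one-sided (co)unitalities and the auxiliary identities \equref{F mod alg unit}, \equref{B comod coalg counit} are in place, every remaining step is a routine application of (co)associativity and the distributive-law axioms of \deref{distr}.
\qed
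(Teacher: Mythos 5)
Your overall route is the same as the paper's: substitute the point-4 formulas for $\psi$ and $\phi'$, hit them with the prescribed (co)units, and collapse using the point-1 and point-2 relations, the (co)unit axioms of the $\tau$'s, the weak (co)unity identities \equref{weak counit Epsilon_C'} and \equref{weak unity eta_M}, and, for the two triviality identities, \equref{F mod alg unit} and \equref{B comod coalg counit}; reducing to three computations by the auto $\pi$-symmetry of the definition is also legitimate. These are precisely the ingredients the paper's own (one-sentence) proof cites.

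However, the finishing step of your first computation is wrong as you describe it, and your ``main obstacle'' paragraph would leave you stuck if followed literally. After $\Epsilon_B$ is pushed through $\tau_{B,F}$ by the comonadic counit law, it sits on the \emph{inner} leg of the pre-comultiplication $\Delta_B$ --- exactly the leg about which the explicit right-hand equality of \equref{weak counit Epsilon_C'} says nothing --- and there is no axiom that ``shuttles'' a counit from one leg of a comultiplication to the other; that would require some form of cocommutativity, which is not available. The step closes for a different reason: in a Hopf datum $\Delta_B$ is in fact counital on \emph{both} sides. Since $\Epsilon_C'$ is not a separate datum, it must be read canonically (counit of $F$ followed by counit of $B$), so the other, left-hand half of \equref{weak counit Epsilon_C'} becomes $(\Id_B\times\Epsilon_F\times\Epsilon_B)\comp(\textnormal{right }F\textnormal{-coaction}\times\Id_B)\comp\Delta_B=\Id_B$, and the counitality of the proper right $F$-coaction on $B$ from point 2 collapses this to the missing identity $(\Id_B\times\Epsilon_B)\comp\Delta_B=\Id_B$. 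Dually, the left-hand half of \equref{weak unity eta_M}, with $\eta_M$ canonical, together with the unitality of the proper left $B$-action on $F$, supplies the left unitality of the pre-multiplication $\mu_F$ needed in the ($\pi$-symmetric) fourth identity. With this one-line repair --- whose ingredients you already list --- your argument is complete and coincides with the paper's.
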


\begin{proof}
The first four identities follow by the point 2 of the Definition and relations \equref{weak counit Epsilon_C'} and \equref{weak unity eta_M}, the last two follow by 
the point 1 of the Definition and relations \equref{F mod alg unit} and \equref{B comod coalg counit}.
\qed\end{proof}

\begin{cor} \colabel{Hopf data}
Every paired wreath is a Hopf datum. 
\end{cor}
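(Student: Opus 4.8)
The plan is to verify, item by item, that a paired wreath $(B,F)$ satisfies each of the four conditions in the definition of a Hopf datum. The definition of a Hopf datum was tailored precisely to \emph{collect} the structure and consequences of a paired wreath that were established in the subsection \emph{Structures inside a paired wreath}, so almost no new computation is needed: the proof reduces to matching up the data and citing identities already derived for an arbitrary paired wreath.

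First I would supply the (co)monad data required in point 1 of the Hopf datum. In a paired wreath $B$ is a monad and $F$ is a comonad by point 1 of \deref{paired bl object}. The comultiplication and counit on $B$, and the multiplication and unit on $F$, are taken to be the pre-comultiplication, pre-counit, pre-multiplication and pre-unit given as canonical restrictions in \equref{pre-(co)mult}. The six compatibility conditions in point 1 of the Hopf datum are then exactly the identities recorded in \equref{(co)unital} (the counit of $B$ is multiplicative, $\Epsilon_B\eta_B=\Id$, the unit of $F$ is comultiplicative, and $\Epsilon_F\eta_F=\Id$) together with the two relations in \equref{(co)unital mixed} (the unit of $B$ is comultiplicative and the counit of $F$ is multiplicative).

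Next I would produce the remaining data. The four (co)module 2-cells \equref{(co)module cells} are precisely those defined in \equref{F left B-mod}, \equref{B right F-comod}, \equref{right F-mod} and \equref{left B-comod}; that the first makes $F$ a proper left $B$-module and the second makes $B$ a proper right $F$-comodule, together with the four (co)unital relations demanded in point 2, is exactly the content of \coref{(co)actions} (the module and comodule claim, plus \equref{(co)mod (co)alg (co)unital} and \equref{right F-mod counits}). The 2-cells $\sigma$ and $\rho'$ of point 3 are those introduced in the notation following \equref{left B-comod}, and the four distributive laws $\tau_{B,F},\tau_{F,B},\tau_{B,B},\tau_{F,F}$ are furnished by point 4 of \deref{paired bl object}, which also asserts that they are left and right monadic and comonadic distributive laws with respect to the structure 2-cells of point 1.

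Finally, for point 4 I would check that the explicit formulas assembling $\psi,\phi',\mu_M,\Delta_C'$ from the above data coincide with the ones holding in the paired wreath: $\psi$ and $\phi'$ agree with \equref{psi and phi concrete} on the nose, while the expressions for $\mu_M$ and $\Delta_C'$ are exactly \equref{mu-delta-final}. Once the data match, the required identities \equref{mod alg}--\equref{unit to ro'} and \equref{1-3}--\equref{proj F-bialg} hold automatically, since every one of them was established for an arbitrary paired wreath in the preceding subsection. The only step requiring genuine attention---and hence the main, albeit mild, obstacle---is confirming the explicit forms of $\mu_M$ and $\Delta_C'$: in \deref{paired bl object} these 2-cells are constrained only implicitly, through \equref{mu_M paired}, \equref{Delta_C' paired} and the $\lambda$-projection axioms \equref{proj B}, \equref{proj F}, so one must invoke the derivation of the $\lambda$-formulas \equref{lambda B}, \equref{lambda F} and the resulting expressions \equref{mu-delta-final} to see that $\mu_M$ and $\Delta_C'$ take precisely the form demanded in the Hopf datum definition.
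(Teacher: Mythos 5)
Your proof is correct and follows exactly the paper's (implicit) argument: the paper states this corollary without proof precisely because the Hopf datum was defined by collecting points 1--2 of \deref{paired bl object}, the thesis of \coref{(co)actions}, and the identities \equref{mod alg}--\equref{unit to ro'}, \equref{1-3}--\equref{proj F-bialg} already established for an arbitrary paired wreath. Your item-by-item matching of the data, including the observation that the forms of $\mu_M$ and $\Delta_C'$ demanded in the Hopf datum are recovered via \equref{lambda B}, \equref{lambda F} and \equref{mu-delta-final}, is exactly the verification the paper leaves to the reader.
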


\begin{rem} \rmlabel{when cocycles are trivial}
Let us record some properties of a Hopf datum. 
Observe that the identities \equref{mod alg}-\equref{comod coalg counity} are $\alpha$-symmetric to the identities \equref{F mod alg}-\equref{B comod coalg counit}, and 
that the first identity in \equref{1-3} is auto $\alpha$- and $\pi$-symmetric. Moreover, if $\sigma$ and $\rho'$ are trivial, the 2-cells $\mu_M$ and $\Delta_C'$ are canonical 
and we have: 
\begin{itemize}
\item the identities \equref{2-cocycle condition}--\equref{unit to ro'} trivially hold, 
\item the identities \equref{weak action}--\equref{weak coaction counity} say that the respective (co)actions (\equref{right F-mod}, \equref{left B-comod}) are proper,  
which is $\alpha$-symmetric to the fact that the (co)actions \equref{F left B-mod}, \equref{B right F-comod} are proper (\coref{(co)actions}),
\item the identities \equref{weak assoc. mu_M}--\equref{weak counit Epsilon_C'} say that $F$ is a monad and $B$ is a comonad 
(this is $\alpha$-symmetric to $B$ being a monad and $F$ a comonad),
\item the second identity in \equref{4-6} holds trivially, 
\item the second identity in \equref{1-3} is $\alpha$-symmetric to the third one therein, 
and the first identity in \equref{4-6} is $\alpha$-symmetric to the third one therein, 
\item the identity \equref{proj B-bialg} is $\alpha$-symmetric to \equref{proj F-bialg}.
\end{itemize}
We have already seen that the definition of a Hopf datum is auto $\pi$-symmetric, now we observe that when $\sigma$ and $\rho'$ are trivial 
a Hopf datum is also auto $\alpha$-symmetric. 
\end{rem}

\medskip

Observe that in a paired wreath and a Hopf datum some of the (co)module structures \equref{F left B-mod} -- \equref{left B-comod} can be trivial. 
The same holds for the (co)cycles $\sigma$ and $\rho'$, as we commented in the above Remark. This gives $2^6$ combinations of structures and thus $2^6$ 
types of paired wreaths and of Hopf data. Some of them have been studied classically in $\K=\hat Vec$ where $Vec$ is the category of vector spaces over a field $k$ (or modules over a commutative ring). 
The rest would yield new structures, not only of crossed products in $Vec$, but even in general $\K$.

\begin{ex}
Let $\K=\hat\C$ where $\C$ is a braided monoidal category with braiding 
$\gbeg{2}{1}
\gbr \gnl
\gend$. In this setting our Hopf datum is the Hopf datum defined in \cite[Section 4]{BD}. For trivial $\sigma$ and $\rho'$ one has the Hopf datum from \cite[Section 2]{BD1}. 
On the other hand, a normalized cross product bialgebra from \cite[Definition 3.5]{BD} differs from our paired wreath object in ``projection relations'' (3.7), which in our 
setting appear substituted by more general conditions \equref{proj B}, \equref{proj F}. Namely, one of the two symmetric projection relations is: 
$$\gbeg{4}{4}
\got{2}{B} \got{1}{F} \gnl
\gcmu \gcl{1} \gnl
\gcl{1} \grmo \gcl{1} \gnl
\gob{1}{B} \gob{1}{B}
\gend=
\gbeg{4}{6}
\got{3}{B} \got{1}{F} \gnl
\gwcm{3} \gcl{1} \gnl
\grcm \glmptb \gnot{\hspace{-0,34cm}\psi} \grmptb \gnl
\gcl{1} \glmpt \gnot{\hspace{-0,34cm}\sigma} \grmptb \gcl{1} \gnl
\gcl{1} \gvac{1} \gmu \gnl
\gob{1}{B} \gob{4}{B}
\gend
$$
Tensoring on the left by $B$ and applying braiding above and below and naturality, this implies: 
$$\gbeg{4}{5}
\got{2}{B} \got{1}{B} \got{1}{F} \gnl
\gcmu \gcl{1} \gcl{1} \gnl
\gcl{1} \gbr \gcl{1} \gnl
\gcl{1} \gcl{1} \grmo \gcl{1} \gnl
\gob{1}{B} \gob{1}{B} \gob{1}{B}
\gend=
\gbeg{4}{7}
\got{3}{B} \got{1}{B}\got{1}{F} \gnl
\gwcm{3} \gcl{1} \gcl{1} \gnl
\grcm \gbr \gcl{1} \gnl
\gcl{1} \gbr \glmptb \gnot{\hspace{-0,34cm}\psi} \grmptb \gnl
\gcl{1} \gcl{2} \glmpt \gnot{\hspace{-0,34cm}\sigma} \grmptb \gcl{1} \gnl
\gcl{1} \gvac{2} \gmu \gnl
\gob{1}{B} \gob{1}{B} \gob{4}{B}
\gend
\qquad\textnormal{which further implies:}\quad
\gbeg{5}{7}
\got{2}{B} \got{3}{B} \gnl
\gcn{2}{1}{2}{2} \gwcm{3} \gnl
\gcmu \grcm \gcl{3} \gnl
\gcl{1} \gbr \gcl{1} \gnl
\gcl{1} \gcl{1} \grmo \gcl{1} \gnl
\gcl{1} \gcl{1} \gwmu{3} \gnl
\gob{1}{B} \gob{1}{B} \gob{3}{B}
\gend=
\gbeg{5}{9}
\gvac{1} \got{1}{B} \got{5}{B} \gnl
\gcn{1}{1}{3}{3} \gvac{2} \gwcm{3} \gnl
\gwcm{3} \grcm \gcl{4} \gnl
\grcm \gbr \gcl{1} \gnl
\gcl{1} \gbr  \glmptb \gnot{\hspace{-0,34cm}\psi} \grmptb \gnl %
\gcl{3} \gcl{3}  \glmpt \gnot{\hspace{-0,34cm}\sigma} \grmptb \gcl{1} \gnl
\gvac{3} \gmu \gcn{1}{1}{1}{0} \gnl
\gcl{1} \gvac{3} \hspace{-0.32cm} \gmu \gnl
\gvac{1} \hspace{-0.24cm} \gob{1}{B} \gob{1}{B} \gvac{2} \gob{1}{B} 
\gend
$$
The latter is our \equref{proj B} with $\tau$'s corresponding to the braiding. The argument for the other identity is similar. 
\end{ex}

In \cite[Section 4]{BD} the authors show graphically a scheme of all the possible $2^6$ Hopf data in $\C$. The analogous scheme can be considered in a general $\K$.

\subsection{When a Hopf datum is a paired wreath}

For the converse of \coref{Hopf data} one should prove that given a Hopf datum, the points 2, 5 and 6 in the definition of a paired wreath hold true. 
To begin with, we will analyze which part of a Hopf datum assures that 
$(F, \psi: BF\to FB, \mu_M, \eta_M)$ is a left wreath around $B$ and $(B, \phi': FB\to BF, \Delta_C', \Epsilon_C')$ is a right cowreath around $F$, 
{\em i.e.} that the point 2 is fulfilled. We will study separate cases. 
Before we sum up our findings, we introduce:

\begin{defn} \delabel{mHd}
A {\em monad Hopf datum} in $\K$ consists of the following data: 
\begin{enumerate}
\item a monad $(B, 
\gbeg{2}{3}
\got{1}{B} \got{1}{B} \gnl
\gmu \gnl
\gob{2}{B}
\gend, 
\gbeg{1}{2}
\gu{1} \gnl
\gob{1}{B}
\gend)$, a 1-cell $F$, both over the same 0-cell $\A$ in $\K$, with 2-cells 
$\gbeg{2}{3}
\got{2}{B} \gnl
\gcmu \gnl
\gob{1}{B} \gob{1}{B}
\gend, 
\gbeg{1}{2}
\got{1}{B} \gnl
\gcu{1} \gnl
\gend$ and 
$\gbeg{2}{3}
\got{1}{F} \got{1}{F} \gnl
\gmu \gnl
\gob{2}{F}
\gend,  
\gbeg{1}{2}
\gu{1} \gnl
\gob{1}{F} \gnl
\gend, 
\gbeg{2}{3}
\got{2}{F} \gnl
\gcmu \gnl
\gob{1}{F} \gob{1}{F}
\gend, 
\gbeg{1}{2}
\got{1}{F} \gnl
\gcu{1} \gnl
\gend$ in $\K$, satisfying the compatibility conditions: 
$$
\gbeg{2}{3}
\got{1}{B} \got{1}{B} \gnl
\gmu \gnl
\gvac{1} \hspace{-0,22cm} \gcu{1} \gnl
\gend=
\gbeg{2}{3}
\got{1}{B} \got{1}{B} \gnl
\gcl{1} \gcl{1} \gnl
\gcu{1} \gcu{1} \gnl
\gend, \hspace{0,5cm}
\gbeg{1}{3}
 \gu{1} \gnl
\hspace{-0,34cm} \gcmu \gnl
\gob{1}{B} \gob{1}{B} \gnl
\gend=
\gbeg{2}{3}
\gu{1} \gu{1} \gnl
\gcl{1} \gcl{1} \gnl
\gob{1}{B} \gob{1}{B} \gnl
\gend, \hspace{0,2cm}
\gbeg{1}{2}
\gu{1} \gnl
\gcu{1} \gnl
\gob{2}{} \gnl
\gend B=
\Id_{id_{\A}},
\hspace{0,7cm}
\gbeg{1}{3}
 \gu{1} \gnl
\hspace{-0,34cm} \gcmu \gnl
\gob{1}{F} \gob{1}{F} \gnl
\gend=
\gbeg{2}{3}
\gu{1} \gu{1} \gnl
\gcl{1} \gcl{1} \gnl
\gob{1}{F} \gob{1}{F}
\gend, \hspace{0,2cm}
\gbeg{2}{3}
\got{1}{F} \got{1}{F} \gnl
\gmu \gnl
\gvac{1} \hspace{-0,22cm} \gcu{1} \gnl
\gend=
\gbeg{2}{3}
\got{1}{F} \got{1}{F} \gnl
\gcl{1} \gcl{1} \gnl
\gcu{1} \gcu{1} \gnl
\gend, \hspace{0,2cm}
\gbeg{1}{2}
\gu{1} \gnl
\gcu{1} \gnl
\gob{2}{} \gnl
\gend F=
\Id_{id_{\A}};
$$
\item further 2-cells: 
\begin{equation} \eqlabel{module cells}
\gbeg{2}{3}
\got{1}{B} \got{1}{F} \gnl
\glm \gnl
\gob{3}{F}
\gend, \hspace{0,6cm}
\gbeg{2}{3}
\got{1}{B} \got{1}{F} \gnl
\grmo \gcl{1} \gnl 
\gob{1}{B}
\gend
\end{equation}
so that 
$\gbeg{2}{1}
\glm \gnl
\gend$ makes $F$ a proper left $B$-module, and the following relations are fulfilled: 
$$
\gbeg{2}{3}
\got{1}{B} \got{1}{F} \gnl
\glm \gnl
\gvac{1} \gcu{1}
\gend=
\gbeg{2}{3}
\got{1}{B} \got{1}{F} \gnl
\gcl{1} \gcl{1} \gnl
\gcu{1} \gcu{1} \gnl
\gend,
\hspace{0,8cm}
\gbeg{2}{3}
\got{1}{B} \got{1}{F} \gnl
\grmo \gcl{1} \gnl 
\gcu{1}
\gend=
\gbeg{2}{3}
\got{1}{B} \got{1}{F} \gnl
\gcl{1} \gcl{1} \gnl
\gcu{1} \gcu{1} \gnl
\gend
$$
\item 2-cells: 
$$
\gbeg{2}{3}
\got{1}{F} \got{1}{F} \gnl
\glmpt \gnot{\hspace{-0,34cm}\sigma} \grmptb \gnl
\gob{3}{B}
\gend 
$$
and 
$$\tau_{B,F}:BF\to FB, \tau_{F,B}:FB\to BF, \tau_{B,B}:BB\to BB, \tau_{F,F}:FF\to FF$$
where $\tau_{B,F}, \tau_{F,F}, \tau_{F,F}, \tau_{F,B}$ are left and right monadic and comonadic distributive laws, where the adjectives ``monadic'' and ``monadic'' 
are meant with respect to the 2-cells from the point 1.;  
\item so that setting: 
$$
\psi=
\gbeg{4}{5}
\got{2}{B} \got{2}{F} \gnl
\gcmu \gcmu \gnl
\gcl{1} \glmptb \gnot{\hspace{-0,34cm}\tau_{B,F}} \grmptb \gcl{1} \gnl
\glm \grmo \gcl{1} \gnl
\gob{1}{} \gob{1}{F} \gob{1}{B} 
\gend, 
\hspace{0,7cm}
\mu_M=
\gbeg{5}{5}
\got{2}{F} \got{2}{F} \gnl
\gcmu \gcmu \gnl
\gcl{1} \glmptb \gnot{\hspace{-0,34cm}\tau_{F,F}} \grmptb \gcl{1} \gnl
\gmu \glmpt \gnot{\hspace{-0,34cm}\sigma} \grmptb \gnl
\gob{2}{F} \gob{3}{B}
\gend
$$
the identities \equref{mod alg} -- \equref{mod alg unity}, \equref{weak assoc. mu_M}--\equref{weak unity eta_M}, \equref{F mod alg} -- \equref{F mod alg unit}, 
\equref{weak action} -- \equref{weak action unity}, \equref{2-cocycle condition} -- \equref{normalized 2-cocycle} and \equref{epsilon to sigma} hold true. 
\end{enumerate}
\end{defn}

\begin{rem}
In the above Definition we could put 
$$
\mu_M=
\gbeg{5}{7}
\got{1}{} \got{1}{F} \got{4}{F} \gnl
\gwcm{3} \gcmu \gnl
\gcl{1} \grmo \gvac{1} \gcl{1} \gcl{1} \gcl{3} \gnl
\gcl{1} \gcl{1} \glmptb \gnot{\hspace{-0,34cm}\tau_{F,F}} \grmptb \gnl
\gcl{1} \glm \gcl{1} \gnl
\gwmu{3} \glmpt \gnot{\hspace{-0,34cm}\sigma} \grmptb \gnl
\gvac{1} \gob{1}{F} \gob{5}{B}
\gend
$$
assuming the existence of the 2-cell 
$\gbeg{2}{1}
\grmo \gvac{1} \gcl{1} 
\gend$ and the relation 
$
\gbeg{2}{4}
\got{3}{F} \gnl
\grmo \gvac{1} \gcl{1}\gnl
\gcl{1} \gcu{1} \gnl
\gob{1}{B} 
\gend=
\gbeg{1}{4}
\got{1}{F} \gnl
\gcu{1} \gnl
\gu{1} \gnl
\gob{1}{B} 
\gend
$. While on one hand, these come from the comonadic structures, on the other hand, the cases for which we will prove to be wreaths it will happen that either $\sigma$ is trivial 
(thus $\mu_M$ is canonical), or that 
$\gbeg{2}{1}
\glm \gnl 
\gend$ is trivial. In both cases we will have the form of $\mu_M$ as in the above Definition. 
\end{rem}

\medskip

Similarly as in \coref{Hd conseq}, we have:

\begin{cor} \colabel{monad Hd conseq}
In a monad Hopf datum the following identity holds true:
$$
\gbeg{2}{4}
\got{3}{F} \gnl
\gu{1} \gcl{1} \gnl
\glmptb \gnot{\hspace{-0,34cm}\psi} \grmptb \gnl
\gob{1}{F} \gob{1}{B}
\gend=
\gbeg{2}{4}
\got{1}{F} \gnl
\gcl{1} \gu{1} \gnl
\gcl{1} \gcl{1} \gnl
\gob{1}{F} \gob{1}{B}
\gend
$$
\end{cor}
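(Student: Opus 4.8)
The plan is to unfold the defining formula for $\psi$ given in point 4 of \deref{mHd} and precompose it with the unit $\eta_B$ of $B$ on the $B$-input, then collapse the result using the unital compatibilities recorded in the definition. Recall that $\psi$ is built by comultiplying the incoming $B$ and $F$, swapping the inner $B$- and $F$-legs by $\tau_{B,F}$, acting on the outer left pair by the left action to produce an $F$, and applying the second 2-cell of \equref{module cells} to the outer right pair to produce a $B$. Precomposing with $\eta_B$ on the $B$-leg replaces the left comultiplication by $\Delta_B\eta_B$, which by the comultiplication--unit compatibility in point 1 of \deref{mHd} equals $\eta_B\ot\eta_B$; hence both $B$-legs entering the diagram are units.

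Next I would cancel the three structure 2-cells one at a time, working from the middle outward. The distributive law $\tau_{B,F}$ receives the second unit $\eta_B$ together with the first leg of $\Delta_F$; since $\tau_{B,F}$ is a left monadic distributive law (point 3 of \deref{mHd}), its unit axiom --- the second identity in \equref{psi laws} read for $\tau_{B,F}$ --- gives $\tau_{B,F}(\eta_B\ot\Id_F)=\Id_F\ot\eta_B$, so the first $F$-leg passes through unchanged and the unit reappears on the right. The left action then receives the first unit $\eta_B$ on its $F$-leg and, by the module unit axiom ($F$ being a proper left $B$-module, point 2 of \deref{mHd}), reduces to the identity on that leg. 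Finally the second 2-cell of \equref{module cells} receives the transported unit $\eta_B$ on the second $F$-leg, and here I invoke \equref{F mod alg unit}, which states precisely that feeding $\eta_B$ into that 2-cell equals $\eta_B\Epsilon_F$; this turns the branch into a counit of $F$ followed by the unit $\eta_B$.

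At this stage the diagram is $(\Id_F\ot\eta_B)$ applied after $(\Id_F\ot\Epsilon_F)\Delta_F$ on the single $F$-input. The comonad counit axiom for $F$ collapses $(\Id_F\ot\Epsilon_F)\Delta_F$ to $\Id_F$, leaving exactly $\Id_F\ot\eta_B$, the right-hand side of the claimed identity. This reproduces, for a monad Hopf datum, the argument already used for the fifth identity of \coref{Hd conseq}; the only change is that the required pieces now sit among the axioms of \deref{mHd} rather than those of a Hopf datum.

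The computation has no genuine obstacle: it is a linear chain of unit/counit cancellations. The single point demanding care is tracking the two copies of $\eta_B$ produced by $\Delta_B\eta_B$ and checking that each lands where a unital axiom applies --- the first feeding the left action (module unit), and the second, after transport across $\tau_{B,F}$, feeding the 2-cell of \equref{module cells} through \equref{F mod alg unit}. Keeping the left/right placement consistent across the $\tau_{B,F}$ swap is the one spot where an easy-to-misread string rearrangement occurs.
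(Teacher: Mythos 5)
Your proposal is correct and takes essentially the same route as the paper: the paper gives no separate proof of this corollary (it is introduced only with ``Similarly as in \coref{Hd conseq}''), and your chain of reductions --- $\Delta_B\eta_B=\eta_B\,\eta_B$ from point 1 of \deref{mHd}, the unit axiom of the monadic distributive law $\tau_{B,F}$, the module-unit law of the left action, and \equref{F mod alg unit} for the 2-cell producing $B$ --- is precisely what that analogy unpacks to. The one caution, which you inherit from the paper rather than introduce yourself, is the final collapse $(\Id_F\ot\Epsilon_F)\Delta_F=\Id_F$: in \deref{mHd} the 1-cell $F$ is not assumed to be a comonad, so this counit law is not literally among the listed axioms (unlike in \coref{Hd conseq}, where point 1 of the Hopf datum definition supplies it), and strictly speaking it must be read as an implicit assumption on the pre-comultiplication of $F$.
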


A {\em comonad Hopf datum} is defined dually, using ``$\pi$-symmetry''. 

\medskip

\begin{rem} \rmlabel{wreath datum}
We will also want to name the direct consequences of a wreath. Assume that $B$ is a monad. The data in point 4 in the above Definition, excluding the identity \equref{epsilon to sigma}, 
we will call a {\em wreath datum}, assuming that all the appearing 2-cells exist. 
\end{rem}

\medskip

We will also need:

\begin{defn}
We say that $\tau_{B,F}$ is natural with respect to 
$\gbeg{2}{3}
\got{1}{B} \got{1}{F} \gnl
\grmo \gcl{1} \gnl 
\gob{1}{B} \gnl
\gend$,  
$\gbeg{2}{3}
\got{1}{B} \got{1}{F} \gnl
\glm \gnl 
\gob{3}{F} \gnl
\gend$ and $\sigma$, respectively, if it holds: 
\begin{center} 
\begin{tabular}{p{4.7cm}p{0cm}p{4.7cm}p{0cm}p{4.7cm}}
\begin{equation} \eqlabel{nat rm}
\gbeg{3}{5}
\got{1}{B} \got{1}{F} \got{1}{F} \gnl
\grmo \gcl{1} \gvac{1} \gcl{1} \gnl
\glmptb \gnot{\tau_{B,F}} \gcmp \grmptb \gnl
\gcl{1} \gvac{1} \gcl{1} \gnl
\gob{1}{F} \gob{3}{B}
\gend=
\gbeg{3}{5}
\got{1}{B} \got{1}{F} \got{1}{F} \gnl
\gcl{1} \glmptb \gnot{\hspace{-0,34cm}\tau_{F,F}} \grmptb \gnl
\glmptb \gnot{\hspace{-0,34cm}\tau_{B,F}} \grmptb \gcl{1} \gnl
\gcl{1} \grmo \gcl{1}  \gnl
\gob{1}{F} \gob{1}{B}
\gend
\end{equation} & & 
\begin{equation} \eqlabel{nat lm}
\gbeg{3}{5}
\got{1}{B} \got{1}{B} \got{1}{F} \gnl
\gcl{1} \glm \gnl
\glmptb \gnot{\tau_{B,F}} \gcmp \grmptb \gnl
\gcl{1} \gvac{1} \gcl{1} \gnl
\gob{1}{F} \gob{3}{B}
\gend=
\gbeg{3}{5}
\got{1}{B} \got{1}{F} \got{1}{F} \gnl
\glmptb \gnot{\hspace{-0,34cm}\tau_{B,B}} \grmptb \gcl{1} \gnl
\gcl{1} \glmptb \gnot{\hspace{-0,34cm}\tau_{B,F}} \grmptb \gnl
\glm \gcl{1} \gnl
\gvac{1} \gob{1}{F} \gob{1}{B}
\gend
\end{equation} & & 
\begin{equation} \eqlabel{nat sigma}
\gbeg{3}{5}
\got{1}{F} \got{1}{F} \got{1}{F} \gnl
\gcl{1} \glmptb \gnot{\hspace{-0,34cm}\tau_{F,F}} \grmptb \gnl
\glmptb \gnot{\hspace{-0,34cm}\tau_{F,F}} \grmptb \gcl{1} \gnl
\gcl{1} \glmpt \gnot{\hspace{-0,34cm}\sigma} \grmptb \gnl
\gob{1}{F} \gob{3}{B}
\gend=
\gbeg{3}{5}
\got{1}{F} \got{1}{F} \got{1}{F} \gnl
\glmpt \gnot{\hspace{-0,34cm}\sigma} \grmptb \gcl{1} \gnl
\gvac{1} \glmptb \gnot{\hspace{-0,34cm}\tau_{B,F}} \grmptb \gnl
\gvac{1} \gcl{1} \gcl{1} \gnl
\gvac{1} \gob{1}{F} \gob{1}{B}
\gend
\end{equation}
\end{tabular}
\end{center}
respectively. Substituting $\tau_{B,F}$ in the above three diagrams by $\tau_{B,B}, \tau_{F,F}$ and $\tau_{B,B}$, respectively, we obtain the notions of naturality for the pairs 
$(\tau_{B,B}, 
\gbeg{2}{1}
\grmo \gcl{1} \gnl 
\gend), 
 (\tau_{F,F}, 
\gbeg{2}{1}
\glm \gnl 
\gend)$ and $(\tau_{B,B}, \sigma)$, respectively, in the obvious way.

We say that $\tau_{F,B}$ is natural with respect to 
$\gbeg{2}{3}
\got{3}{F} \gnl
\grmo \gvac{1} \gcl{1} \gnl 
\gob{1}{B} \gob{1}{F} \gnl
\gend$,  
$\gbeg{2}{3}
\got{1}{B} \gnl
\grcm \gnl 
\gob{1}{B} \gob{1}{F} \gnl
\gend$ and $\rho'$, respectively, if it holds:  
\begin{center} 
\begin{tabular}{p{4.7cm}p{0cm}p{4.7cm}p{0cm}p{4.7cm}}
\begin{equation} \eqlabel{nat lcm}
\gbeg{3}{5}
\got{1}{F} \got{3}{B} \gnl
\gcl{1} \gvac{1} \gcl{1} \gnl
\glmptb \gnot{\tau_{F,B}} \gcmp \grmptb \gnl
\gcl{1} \grmo \gvac{1} \gcl{1} \gnl
\gob{1}{B} \gob{1}{B} \gob{1}{F} 
\gend=
\gbeg{3}{5}
\got{1}{} \got{1}{F} \got{1}{B} \gnl
\grmo \gvac{1} \gcl{1} \gcl{1}\gnl 
\gcl{1} \glmptb \gnot{\hspace{-0,34cm}\tau_{F,B}} \grmptb \gnl
\glmptb \gnot{\hspace{-0,34cm}\tau_{B,B}} \grmptb \gcl{1} \gnl
\gob{1}{B} \gob{1}{B} \gob{1}{F} 
\gend
\end{equation} & & 
\begin{equation} \eqlabel{nat rcm}
\gbeg{3}{5}
\got{1}{F} \got{3}{B} \gnl
\gcl{1} \gvac{1} \gcl{1} \gnl
\glmptb \gnot{\tau_{F,B}} \gcmp \grmptb \gnl
\grcm \gcl{1} \gnl
\gob{1}{B} \gob{1}{F} \gob{1}{F} 
\gend=
\gbeg{3}{5}
\got{1}{F} \got{1}{B} \gnl
\gcl{1} \grcm \gnl
\glmptb \gnot{\hspace{-0,34cm}\tau_{F,B}} \grmptb \gcl{1} \gnl
\gcl{1} \glmptb \gnot{\hspace{-0,34cm}\tau_{F,F}} \grmptb \gnl
\gob{1}{B} \gob{1}{F} \gob{1}{F} 
\gend
\end{equation} & & 
\begin{equation} \eqlabel{nat rho'}
\gbeg{3}{5}
\got{1}{F} \got{3}{B} \gnl
\glmptb \gnot{\hspace{-0,34cm}\rho'} \grmpb \gcl{1} \gnl
\gcl{1} \glmptb \gnot{\hspace{-0,34cm}\tau_{B,B}} \grmptb \gnl
\glmptb \gnot{\hspace{-0,34cm}\tau_{B,B}} \grmptb \gcl{1} \gnl
\gob{1}{B} \gob{1}{B} \gob{1}{B}
\gend=
\gbeg{3}{5}
\got{1}{F} \got{1}{B} \gnl
\gcl{1} \gcl{1} \gnl
\glmptb \gnot{\hspace{-0,34cm}\tau_{F, B}} \grmptb \gnl
\gcl{1} \glmptb \gnot{\hspace{-0,34cm}\rho'} \grmpb \gnl
\gob{1}{B} \gob{1}{B} \gob{1}{B}
\gend
\end{equation}
\end{tabular}
\end{center}
respectively. Substituting $\tau_{F,B}$ in the above three diagrams by $\tau_{F,F}, \tau_{B,B}$ and $\tau_{F,F}$, respectively, we obtain the notions of naturality for the pairs 
$(\tau_{F,F}, 
\gbeg{2}{1}
\grmo \gvac{1} \gcl{1}  \gnl 
\gend), 
 (\tau_{B,B}, 
\gbeg{2}{1}
\grcm \gnl 
\gend)$ and $(\tau_{F,F}, \rho')$, respectively, in the obvious way. 
\end{defn}

\begin{defn}
We say that $(\tau_{F,B}, \tau_{F,F}, \tau_{F,B}), (\tau_{B,F}, \tau_{B,B}, \tau_{F,B}), 
(\tau_{B,F}, \tau_{F,F}), (\tau_{F,B}, \tau_{B,B}), \tau_{B,B}$ and $\tau_{F,F}$ satisfy the Yang-Baxter equations if 
\begin{center} 
\begin{tabular}{p{6cm}p{0cm}p{6cm}}
\begin{equation} \eqlabel{YBE FBF}
\gbeg{3}{5}
\got{1}{F} \got{1}{B} \got{1}{F} \gnl
\gcl{1} \glmptb \gnot{\hspace{-0,34cm}\tau_{B,F}} \grmptb \gnl
\glmptb \gnot{\hspace{-0,34cm}\tau_{F,F}} \grmptb \gcl{1} \gnl
\gcl{1} \glmptb \gnot{\hspace{-0,34cm}\tau_{F,B}} \grmptb \gnl
\gob{1}{F} \gob{1}{B} \gob{1}{F}
\gend=
\gbeg{3}{5}
\got{1}{F} \got{1}{B} \got{1}{F} \gnl
\glmptb \gnot{\hspace{-0,34cm}\tau_{F,B}} \grmptb \gcl{1} \gnl
\gcl{1} \glmptb \gnot{\hspace{-0,34cm}\tau_{F,F}} \grmptb \gnl
\glmptb \gnot{\hspace{-0,34cm}\tau_{B,F}} \grmptb \gcl{1} \gnl
\gob{1}{F} \gob{1}{B} \gob{1}{F}
\gend
\end{equation} & & 
\begin{equation} \eqlabel{YBE BFB}
\gbeg{3}{5}
\got{1}{B} \got{1}{F} \got{1}{B} \gnl
\glmptb \gnot{\hspace{-0,34cm}\tau_{B,F}} \grmptb \gcl{1} \gnl
\gcl{1} \glmptb \gnot{\hspace{-0,34cm}\tau_{B,B}} \grmptb \gnl
\glmptb \gnot{\hspace{-0,34cm}\tau_{F,B}} \grmptb \gcl{1} \gnl
\gob{1}{B} \gob{1}{F} \gob{1}{B} 
\gend=
\gbeg{3}{5}
\got{1}{B} \got{1}{F} \got{1}{B} \gnl
\gcl{1} \glmptb \gnot{\hspace{-0,34cm}\tau_{F,B}} \grmptb \gnl
\glmptb \gnot{\hspace{-0,34cm}\tau_{B,B}} \grmptb \gcl{1} \gnl
\gcl{1} \glmptb \gnot{\hspace{-0,34cm}\tau_{B,F}} \grmptb \gnl
\gob{1}{B} \gob{1}{F} \gob{1}{B} 
\gend
\end{equation}
\end{tabular}
\end{center}

\begin{center} 
\begin{tabular}{p{6cm}p{0cm}p{6cm}}
\begin{equation} \eqlabel{YBE1}
\gbeg{3}{5}
\got{1}{B} \got{1}{F} \got{1}{F} \gnl
\gcl{1} \glmptb \gnot{\hspace{-0,34cm}\tau_{F,F}} \grmptb \gnl
\glmptb \gnot{\hspace{-0,34cm}\tau_{B,F}} \grmptb \gcl{1} \gnl
\gcl{1} \glmptb \gnot{\hspace{-0,34cm}\tau_{B,F}} \grmptb \gnl
\gob{1}{F} \gob{1}{F} \gob{1}{B}
\gend=
\gbeg{3}{5}
\got{1}{B} \got{1}{F} \got{1}{F} \gnl
\glmptb \gnot{\hspace{-0,34cm}\tau_{B,F}} \grmptb \gcl{1} \gnl
\gcl{1} \glmptb \gnot{\hspace{-0,34cm}\tau_{B,F}} \grmptb \gnl
\glmptb \gnot{\hspace{-0,34cm}\tau_{F,F}} \grmptb \gcl{1} \gnl
\gob{1}{F} \gob{1}{F} \gob{1}{B}
\gend
\end{equation} & & 
\begin{equation} \eqlabel{YBE2}
\gbeg{3}{5}
\got{1}{F} \got{1}{B} \got{1}{B} \gnl
\glmptb \gnot{\hspace{-0,34cm}\tau_{F,B}} \grmptb \gcl{1} \gnl
\gcl{1} \glmptb \gnot{\hspace{-0,34cm}\tau_{F,B}} \grmptb \gnl
\glmptb \gnot{\hspace{-0,34cm}\tau_{B,B}} \grmptb \gcl{1} \gnl
\gob{1}{B} \gob{1}{B} \gob{1}{F} 
\gend=
\gbeg{3}{5}
\got{1}{F} \got{1}{B} \got{1}{B} \gnl
\gcl{1} \glmptb \gnot{\hspace{-0,34cm}\tau_{B,B}} \grmptb \gnl
\glmptb \gnot{\hspace{-0,34cm}\tau_{F,B}} \grmptb \gcl{1} \gnl
\gcl{1} \glmptb \gnot{\hspace{-0,34cm}\tau_{F,B}} \grmptb \gnl
\gob{1}{B} \gob{1}{B} \gob{1}{F} 
\gend
\end{equation}
\end{tabular}
\end{center}

\begin{center} 
\begin{tabular}{p{6cm}p{0cm}p{6cm}}
\begin{equation} \eqlabel{YBE BB}
\gbeg{3}{5}
\got{1}{B} \got{1}{B} \got{1}{B} \gnl
\gcl{1} \glmptb \gnot{\hspace{-0,34cm}\tau_{B,B}} \grmptb \gnl
\glmptb \gnot{\hspace{-0,34cm}\tau_{B,B}} \grmptb \gcl{1} \gnl
\gcl{1} \glmptb \gnot{\hspace{-0,34cm}\tau_{B,B}} \grmptb \gnl
\gob{1}{B} \gob{1}{B} \gob{1}{B}
\gend=
\gbeg{3}{5}
\got{1}{B} \got{1}{B} \got{1}{B} \gnl
\glmptb \gnot{\hspace{-0,34cm}\tau_{B,B}} \grmptb \gcl{1} \gnl
\gcl{1} \glmptb \gnot{\hspace{-0,34cm}\tau_{B,B}} \grmptb \gnl
\glmptb \gnot{\hspace{-0,34cm}\tau_{B,B}} \grmptb \gcl{1} \gnl
\gob{1}{B} \gob{1}{B} \gob{1}{B}
\gend
\end{equation} & & 
\begin{equation} \eqlabel{YBE FF}
\gbeg{3}{5}
\got{1}{F} \got{1}{F} \got{1}{F} \gnl
\glmptb \gnot{\hspace{-0,34cm}\tau_{F,F}} \grmptb \gcl{1} \gnl
\gcl{1} \glmptb \gnot{\hspace{-0,34cm}\tau_{F,F}} \grmptb \gnl
\glmptb \gnot{\hspace{-0,34cm}\tau_{F,F}} \grmptb \gcl{1} \gnl
\gob{1}{F} \gob{1}{F} \gob{1}{F} 
\gend=
\gbeg{3}{5}
\got{1}{F} \got{1}{F} \got{1}{F} \gnl
\gcl{1} \glmptb \gnot{\hspace{-0,34cm}\tau_{F,F}} \grmptb \gnl
\glmptb \gnot{\hspace{-0,34cm}\tau_{F,F}} \grmptb \gcl{1} \gnl
\gcl{1} \glmptb \gnot{\hspace{-0,34cm}\tau_{F,F}} \grmptb \gnl
\gob{1}{F} \gob{1}{F} \gob{1}{F} 
\gend
\end{equation} 
\end{tabular}
\end{center}
hold, respectively. 
\end{defn}

Observe that assertions for $\phi'$ and $\rho'$ (and their proofs) are $\pi$-symmetric to those for $\psi$ and $\sigma$. We find:

\begin{prop}  \prlabel{simplest case}
The simplest monad Hopf datum, given by $\psi=\tau_{B,F}$ and trivial $\sigma$ (the actions \equref{module cells} are trivial), 
consists of monads $B$ and $F$. It determines a canonical wreath.  
\end{prop}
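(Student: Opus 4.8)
The plan is to show that the two simplifying hypotheses force every structure 2-cell into its canonical form, after which the defining identities of a monad Hopf datum coincide verbatim with the wreath axioms \equref{psi laws for B}, \equref{2-cell mu_M}, \equref{2-cell eta_M}, \equref{monad law mu_M} and \equref{monad law eta_M}. Since $B$ is a monad by the first point of \deref{mHd}, the substantive tasks are to produce the monad structure of $F$ and to certify the five wreath axioms.

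First I would rewrite $\mu_M$. Substituting the trivial cocycle $\sigma=\eta_B(\Epsilon_F\otimes\Epsilon_F)$ into the formula for $\mu_M$ in \deref{mHd} and contracting with the right comonadic counit law of $\tau_{F,F}$, namely $(\id_F\otimes\Epsilon_F)\tau_{F,F}=\Epsilon_F\otimes\id_F$, together with the counit axioms of the comultiplication of $F$, collapses the $B$-leg of $\mu_M$ to $\eta_B$ and its $F$-leg to the base multiplication $m$ of $F$; thus $\mu_M=m\otimes\eta_B$ is canonical and, correspondingly, $\eta_M=\eta_F\otimes\eta_B$ (consistently with the canonical restriction \equref{pre-(co)mult} and the normalisation \equref{normalized 2-cocycle}). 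The dual computation, feeding the trivial actions $\Epsilon_B\otimes\id_F$ and $\id_B\otimes\Epsilon_F$ of \equref{module cells} into the formula for $\psi$ and using the counit axioms of the comultiplications of $B$ and $F$, recovers $\psi=\tau_{B,F}$, in accordance with the hypothesis.

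With $\mu_M,\eta_M$ canonical and the left action trivial, the assumed identities \equref{weak assoc. mu_M} and \equref{weak unity eta_M} reduce term by term --- the inserted $B$-legs being $\eta_B$ and absorbed by $\Epsilon_B$ through $\Epsilon_B\eta_B=\id$ --- to the associativity and the two unit laws of $m$ with unit $\eta_F$; combined with the counit compatibilities already present in point~1 of \deref{mHd}, this proves that $F$ is a monad. For the five wreath axioms I would argue that canonicity makes every $B$-leg equal to $\eta_B$, so each axiom collapses: \equref{psi laws for B} is exactly the statement that $\psi=\tau_{B,F}$ is a left monadic distributive law for $B$, which holds by hypothesis; \equref{2-cell mu_M} becomes the assertion that $\tau_{B,F}$ distributes over $m$, i.e. that $\tau_{B,F}$ is monadic with respect to $F$; \equref{2-cell eta_M} becomes the unit compatibility $\tau_{B,F}(\id_B\otimes\eta_F)=\eta_F\otimes\id_B$; and \equref{monad law mu_M}, \equref{monad law eta_M} reduce, after simplifying the intervening $\psi$ via $\tau_{B,F}(\eta_B\otimes\id_F)=\id_F\otimes\eta_B$, to the associativity and unitality of $m$ just obtained. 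All structure 2-cells being canonical, the resulting wreath is canonical, which is the claim.

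The step I expect to be the crux is the passage from the contracted identities that a monad Hopf datum actually supplies (the $\Epsilon_B$-reductions \equref{weak assoc. mu_M}--\equref{weak unity eta_M} and the companion \equref{epsilon to sigma}) to the full, uncontracted wreath axioms living in $FB$. The whole argument rests on canonicity of $\mu_M$ and $\eta_M$ allowing each contracted identity to be lifted back without any further hypothesis; the delicate point is checking that the counit compatibilities invoked --- of $\tau_{F,F}$ in the reduction of $\mu_M$ and of $\tau_{B,F}$ in the unit axioms --- are precisely the ones guaranteed by the requirement in \deref{mHd} that the $\tau$'s be two-sided monadic and comonadic distributive laws.
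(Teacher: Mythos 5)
Your proposal is correct and follows essentially the same route as the paper's (much terser) proof: observe that triviality of $\sigma$ and of the actions \equref{module cells} collapses $\mu_M$, $\eta_M$ to canonical form and $\psi$ to $\tau_{B,F}$, so that every identity of the wreath datum holds trivially except \equref{weak assoc. mu_M}--\equref{weak unity eta_M}, which say $F$ is a monad, and then the wreath axioms reduce to the assumed distributive-law properties of $\tau_{B,F}$ together with (co)associativity and unitality --- i.e.\ ``every pair of monads delivers a canonical wreath.'' Your explicit verification of the five wreath axioms simply fills in the details the paper leaves as ``direct.''
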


\begin{proof}
In this case, all the identities in the wreath datum, but the two ones claiming the monad laws for $F$, hold trivially. The proof of the rest is direct: 
every pair of monads delivers a canonical wreath.  
\qed\end{proof}

As we saw, wreaths define associative wreath products whose product $\nabla_{FB}$ and unit $\eta_{FB}$ are given via \equref{wreath (co)product - old}. 
It is directly proved that when $\mu_M$ is canonical and $\nabla_{FB}$ and $\eta_{FB}$ define a monad, then $\psi$ is a left and right monadic distributive law. 
Then for $\K=\hat\C$, where $\C$ is a monoidal category 
we recover the following well-known fact:

\begin{cor}
Given algebras $B,F$ and a morphism $\tau_{B,F}: B\ot F\to F\ot B$ in a monoidal category $\C$, 
the tensor product $F\ot B$ has an asssociative product defined by $(\nabla_B\ot\nabla_F)(B\ot\tau_{B,F}\ot F)$, where  
$\nabla_B, \nabla_F$ are multiplications of $B,F$, respectively, if and only if $\tau_{B,F}$ is a left and right monadic distributive law in $\C$. 
\end{cor}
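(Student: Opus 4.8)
The plan is to read the Corollary as the $\K=\hat\C$ incarnation of the sentence preceding it: once $\sigma$ is trivial and $\mu_M$ is canonical, the wreath product $\nabla_{FB}$ and unit $\eta_{FB}$ of \equref{wreath (co)product - old} become, in $\C$, precisely the product $m=(\nabla_B\ot\nabla_F)(B\ot\tau_{B,F}\ot F)$ on $F\ot B$ together with the unit $\eta_F\ot\eta_B$, with $\psi=\tau_{B,F}$. Accordingly I read ``$F\ot B$ carries an associative product'' as ``$(F\ot B,m,\eta_F\ot\eta_B)$ is an algebra'' (a monad in $\hat\C$), and prove the two implications separately.

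For the direction ``distributive law $\Rightarrow$ algebra'' I would invoke \prref{simplest case}: monads $B,F$ together with $\psi=\tau_{B,F}$ a left and right monadic distributive law and trivial $\sigma$ form the simplest monad Hopf datum, which determines a canonical wreath; since every wreath has an associative, unital wreath product \cite{LS}, $m$ is associative with unit $\eta_F\ot\eta_B$. Concretely this is the textbook verification for a twisted tensor product: rebracketing the two nested copies of $m$ uses the two multiplicativity (``hexagon'') axioms of $\tau_{B,F}$, while the associativity and unitality of $\nabla_B,\nabla_F$ close the computation.

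For the converse ``algebra $\Rightarrow$ distributive law'' the device is to feed the units $\eta_B$ and $\eta_F$ into selected tensor factors. First, requiring $\eta_F\ot\eta_B$ to be a left, respectively a right, unit for $m$ forces — using only unitality of $\nabla_B,\nabla_F$ — the two unital axioms $\tau_{B,F}(\eta_B\ot F)=F\ot\eta_B$ and $\tau_{B,F}(B\ot\eta_F)=\eta_F\ot B$. Then I would evaluate associativity $m(m\ot\Id)=m(\Id\ot m)$ on the two special triples obtained by setting all but one factor to a unit, namely on $(\eta_F\ot b_1,\eta_F\ot b_2,f\ot\eta_B)$ and on $(\eta_F\ot b,f_2\ot\eta_B,f_3\ot\eta_B)$. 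Feeding back the two unital axioms just established collapses the spurious unit strands, and the first triple reduces associativity exactly to multiplicativity of $\tau_{B,F}$ in the $B$-slot, $\tau_{B,F}(\nabla_B\ot F)=(F\ot\nabla_B)(\tau_{B,F}\ot B)(B\ot\tau_{B,F})$, while the second reduces it to multiplicativity in the $F$-slot, $\tau_{B,F}(B\ot\nabla_F)=(\nabla_F\ot B)(F\ot\tau_{B,F})(\tau_{B,F}\ot F)$. These four identities are precisely the axioms \equref{psi laws} (and their right-handed mirror) stating that $\tau_{B,F}$ is a left and right monadic distributive law.

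The main obstacle is purely bookkeeping in the converse: one must check that after the unit insertions the two chosen triples isolate the $B$- and $F$-hexagons with no residual $\tau_{B,F}$-terms, which is exactly why the two unital axioms have to be secured first and then reused. The order reversal between $\hat\C$ and $\C$ (composition of $1$-cells versus tensor product) must also be tracked so that the two hexagons land on the correct sides, but this is notational and does not affect the argument.
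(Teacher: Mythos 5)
Your proposal is correct and follows essentially the same route as the paper: the forward direction is exactly \prref{simplest case} combined with the associativity of the Lack--Street wreath product, and your unit-insertion argument for the converse is precisely the ``directly proved'' verification that the paper alludes to (but does not write out) in the sentence immediately preceding the Corollary. Your decision to read ``associative product'' as ``algebra with unit $\eta_F\ot\eta_B$'' matches the paper's own phrasing (``$\nabla_{FB}$ and $\eta_{FB}$ define a monad'') and is genuinely necessary, since associativity alone would not force the two unit axioms of the distributive law.
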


\begin{prop} \prlabel{triv psi, nontriv sigma}
The monad Hopf datum given by $\psi=\tau_{B,F}$ and a non-trivial $\sigma$ consists of the following 2-cocycle: 
$$
\gbeg{3}{6}
\got{1}{B} \got{1}{F} \got{1}{F} \gnl
\glmptb \gnot{\hspace{-0,34cm}\tau} \grmptb \gcl{1} \gnl
\gcl{1} \glmptb \gnot{\hspace{-0,34cm}\tau} \grmptb \gnl
\glmptb \gnot{\hspace{-0,34cm}\sigma} \grmpt \gcl{1} \gnl
\gwmu{3} \gnl
\gob{3}{B}
\gend=
\gbeg{3}{5}
\got{1}{B} \got{1}{F} \got{1}{F}\gnl
\gcl{1} \gcl{1} \gcl{1} \gnl
\gcl{1} \glmpt \gnot{\hspace{-0,34cm}\sigma} \grmptb  \gnl
\gwmu{3} \gnl
\gob{3}{B}
\gend,
\hspace{0,4cm}
\gbeg{5}{7}
\got{1}{F} \got{2}{F} \got{2}{F}\gnl
\gcl{1} \gcmu \gcmu  \gnl
\gcl{1} \gcl{1} \glmptb \gnot{\hspace{-0,34cm}\tau} \grmptb \gcl{1} \gnl
\gcn{1}{1}{1}{2} \gmu \glmpt \gnot{\hspace{-0,34cm}\sigma} \grmptb  \gnl
\gvac{1} \hspace{-0,22cm} \glmpt \gnot{\hspace{-0,34cm}\sigma} \grmptb \gvac{1} \gcn{1}{1}{2}{1} \gnl
\gvac{2} \gwmu{3} \gnl
\gob{7}{B}
\gend=
\gbeg{6}{8}
\got{2}{F} \got{2}{F} \got{2}{F} \gnl
\gcmu \gcmu \gcn{1}{2}{2}{2} \gnl
\gcl{1} \glmptb \gnot{\hspace{-0,34cm}\tau} \grmptb \gcl{1} \gnl
\gmu \glmpt \gnot{\hspace{-0,34cm}\sigma} \grmptb \gcn{1}{1}{2}{1} \gnl
\gcn{1}{1}{2}{3} \gvac{2} \glmptb \gnot{\hspace{-0,34cm}\tau} \grmptb \gnl
\gvac{1} \glmpt \gnot{\sigma} \gcmpb \grmpt \gcl{1} \gnl
\gvac{2} \gwmu{3} \gnl
\gob{7}{B}
\gend,
\hspace{0,4cm}
\gbeg{2}{5}
\got{3}{F}\gnl
\gu{1} \gcl{1} \gnl
\glmpt \gnot{\hspace{-0,34cm}\sigma} \grmptb \gnl
\gvac{1} \gcl{1} \gnl
\gob{3}{B}
\gend=
\gbeg{1}{4}
\got{1}{F}\gnl
\gcu{1} \gnl
\gu{1} \gnl
\gob{1}{B}
\gend=
\gbeg{3}{5}
\got{1}{F}\gnl
\gcl{1} \gu{1} \gnl
\glmpt \gnot{\hspace{-0,34cm}\sigma} \grmptb \gnl
\gvac{1} \gcl{1} \gnl
\gob{3}{B}
\gend,
\hspace{0,4cm}
\gbeg{2}{3}
\got{1}{F} \got{1}{F}\gnl
\glmpt \gnot{\hspace{-0,34cm}\sigma} \grmptb \gnl
\gvac{1} \gcu{1} \gnl
\gend=
\gbeg{2}{2}
\got{1}{F} \got{1}{F}\gnl
\gcu{1} \gcu{1} \gnl
\gend.
$$
If moreover $F$ is a comonad and \equref{YBE1}, \equref{YBE FF} and \equref{nat sigma} hold, then this monad Hopf datum is a wreath. 
\end{prop}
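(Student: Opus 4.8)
\emph{Overview.} To prove that the monad Hopf datum is a wreath I must verify that $(F,\psi,\mu_M,\eta_M)$, with $\eta_M$ taken canonical, obeys all the axioms of a left wreath around $B$: that $\psi$ is a left monadic distributive law \equref{psi laws for B}, the two $2$-cell conditions \equref{2-cell mu_M} and \equref{2-cell eta_M}, and the two monad laws \equref{monad law mu_M} and \equref{monad law eta_M}. First I would read off the four displayed equalities by substituting $\psi=\tau_{B,F}$ — equivalently, taking the two actions \equref{module cells} trivial — into the defining relations of a monad Hopf datum: the module-monad relations \equref{mod alg}--\equref{mod alg unity} and \equref{F mod alg}--\equref{F mod alg unit} become vacuous, while \equref{weak action}, \equref{2-cocycle condition}, \equref{normalized 2-cocycle} and the last equality of \equref{epsilon to sigma} collapse, using that $\sigma$ is $\mu_M$ post-composed with the counit of $F$ on its $F$-leg and that the trivial right action replaces $\grmo$ by that same counit, to exactly the four stated identities.

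\emph{The immediate axioms.} Since $\tau_{B,F}$ is, as part of the data, a left monadic distributive law with respect to the monad $B$, the equality $\psi=\tau_{B,F}$ makes \equref{psi laws for B} hold at once. With $\eta_M$ canonical, the $2$-cell condition \equref{2-cell eta_M} reduces, through the unital axioms of $\tau_{B,F}$ (with respect to both $B$ and $F$) and the unit of $B$, to a triviality; and both halves of the unit law \equref{monad law eta_M} follow in the same spirit once the canonical $\eta_M$ is fed into $\mu_M$ and the normalized-cocycle identity (the third displayed equality) is combined with the counit of $F$ and the unit of $B$. These three checks are routine.

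\emph{The $\mu_M$ $2$-cell condition.} For \equref{2-cell mu_M} the plan is to transport the incoming $B$-leg, entering as $\psi=\tau_{B,F}$, through the explicit form of $\mu_M$ assembled from $\Delta_F$, $\tau_{F,F}$, $\mu_F$ and $\sigma$. The comonadic distributive-law property of $\tau_{B,F}$ with respect to the comonad $F$ moves it past the comultiplications; the Yang–Baxter identity \equref{YBE1} carries it across $\tau_{F,F}$; the monadic property carries it past $\mu_F$; and the naturality \equref{nat sigma} finally absorbs it into the $\sigma$-output. Reassembling yields the right-hand side of \equref{2-cell mu_M}.

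\emph{The main obstacle: associativity of $\mu_M$.} The genuinely hard axiom is \equref{monad law mu_M}. Expanding both sides through the explicit $\mu_M$, each becomes a composite of three comultiplications $\Delta_F$, several copies of $\tau_{F,F}$, one $\mu_F$ and two copies of $\sigma$, with $\psi=\tau_{B,F}$ present on the right-hand side where the $B$-output of the inner $\mu_M$ must cross the third $F$. I would first use coassociativity of $F$ — this is exactly where the new hypothesis that $F$ be a comonad is indispensable — to align the three comultiplication trees, together with the naturality of $\tau_{F,F}$ with respect to $\mu_F$ and $\Delta_F$; then apply the braid relation \equref{YBE FF} and the mixed Yang–Baxter relation \equref{YBE1} to bring the two $\sigma$-boxes into adjacent position; and then invoke \equref{nat sigma} to slide the first $\sigma$-output past the remaining $\tau_{F,F}$, which is precisely the step that manufactures the $\tau_{B,F}=\psi$ crossing demanded on the right. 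At that point the two sides differ only by the reassociation of the two $\sigma$-boxes, which is closed by the $2$-cocycle condition (the second displayed identity). The real difficulty is bookkeeping — controlling the many legs spawned by the iterated comultiplications and applying the three hypotheses in the correct order; conceptually this is the $2$-categorical incarnation of the classical fact that a $\sigma$-twisted crossed product is associative exactly when $\sigma$ is a normalized $2$-cocycle.
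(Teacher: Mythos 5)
Your overall architecture is sound and is, in substance, the specialization (to a trivial right action of $F$ on $B$) of the argument the paper actually gives: the paper does not prove this proposition directly, but observes that the wreath claim is a particular case of the $\psi_2$ statement of \prref{psi_1,2}, whose proof carries out precisely the strand manipulations you describe. Your identification of the four displayed identities as the collapsed forms of \equref{weak action}, \equref{2-cocycle condition}, \equref{normalized 2-cocycle} and \equref{epsilon to sigma}, your routine checks of the $\psi$ and unit axioms, and your recipe for associativity \equref{monad law mu_M} (coassociativity of $F$, the distributive laws of $\tau_{F,F}$, \equref{YBE FF}, \equref{nat sigma} and the $2$-cocycle identity) all agree with the paper's; in the special case at hand the extra hypotheses \equref{proj F-bialg} and \equref{nat rm} of \prref{psi_1,2} degenerate, which is why they do not appear in the statement.

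There is, however, a concrete error at the key step of the $2$-cell condition \equref{2-cell mu_M}. You propose to finish by letting \equref{nat sigma} ``absorb'' the travelling $B$-leg into the $\sigma$-output. But \equref{nat sigma} moves an $F$-strand (via two $\tau_{F,F}$'s and one $\tau_{B,F}$) past $\sigma$; it does not even typecheck when the moving strand is a $B$. The naturality that would typecheck is the pair $(\tau_{B,B},\sigma)$, and even that does not close the argument: it turns the left-hand side of \equref{2-cell mu_M} into a composite containing $\mu_B\circ\tau_{B,B}\circ(\id_B\otimes\sigma)$, whereas the right-hand side contains $\mu_B\circ(\id_B\otimes\sigma)$, and nothing lets you cancel $\tau_{B,B}$ against $\mu_B$. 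The true obstruction is that on the two sides of \equref{2-cell mu_M} the crossed $B$ enters $\mu_B$ on \emph{opposite sides} of the $\sigma$-output, and no naturality can repair an order of multiplication. What closes the step is exactly the first of your four displayed identities, i.e.\ \equref{weak action} with trivial actions,
$$\mu_B\circ(\sigma\otimes\id_B)\circ(\id_F\otimes\tau_{B,F})\circ(\tau_{B,F}\otimes\id_F)\;=\;\mu_B\circ(\id_B\otimes\sigma),$$
applied after the comonadic law, \equref{YBE1} and the monadic law have carried the $B$-strand past $\Delta_F\otimes\Delta_F$, $\tau_{F,F}$ and $\mu_F$. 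With that substitution your proof of \equref{2-cell mu_M} is correct; \equref{nat sigma} is genuinely needed only in the associativity axiom, precisely where you invoke it.
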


\begin{proof}
The first statement is clear the rest of the identities in a monad Hopf datum are trivial in this setting. The second assertion is a particular case of the 
analogous claim for $\psi_2$ in \prref{psi_1,2}, which will be proved below. 
\qed\end{proof}

\begin{ex} \exlabel{group coh}
When $\K=\hat\C$ where $\C$ is the monoidal (sub)category of trivial modules $M=B$ over a group $G=F$, the above recovers 2-cocycles from the group cohomology. The 2-cell $\tau$ is the flip 
in the category of abelian groups and the first condition above reflects the abelianity of the module $M=B$, as an additive group. The second one is the cocycle condition, and the 
third one states that $\sigma$ is normalized.  
\end{ex}

\begin{prop} \prlabel{psi_1,2}
\begin{enumerate}
\item Set $\psi_1=
\gbeg{3}{5}
\got{2}{B} \got{1}{F} \gnl
\gcmu \gcl{1} \gnl
\gcl{1}  \glmptb \gnot{\hspace{-0,34cm}\tau_{B,F}} \grmptb \gnl
\glm \gcl{1} \gnl
\gob{1}{} \gob{1}{F} \gob{1}{B} 
\gend$ (that is, 
$\gbeg{2}{3}
\got{1}{B} \got{1}{F} \gnl
\grmo \gcl{1} \gnl 
\gob{1}{B}
\gend$ is trivial). If $(B,F)$ is a monad Hopf datum, $\sigma$ is trivial and \equref{proj B-bialg} and \equref{nat lm} hold, 
then $(F, \psi_1, \mu_M \hspace{0,2cm}\textnormal{canonical})$ is a left wreath around $B$.

\item Set $\psi_2=
\gbeg{3}{5}
\got{1}{B} \got{2}{F} \gnl
\gcl{1} \gcmu \gnl
\glmptb \gnot{\hspace{-0,34cm}\tau_{B,F}} \grmptb \gcl{1} \gnl
\gcl{1} \grmo \gcl{1} \gnl
\gob{1}{F} \gob{1}{B} 
\gend$ (that is, $\gbeg{2}{3}
\got{1}{B} \got{1}{F} \gnl
\glm \gnl 
\gob{3}{F}
\gend$ is trivial). If $(B,F)$ is a monad Hopf datum, $F$ is a comonad, \equref{proj F-bialg}, \equref{YBE1}, \equref{nat rm}, \equref{YBE FF} and \equref{nat sigma} hold, 
then $(F, \psi_2, \mu_M)$ is a left wreath around $B$.

\item Set $\phi_1'= 
\gbeg{3}{5}
\got{1}{F} \got{1}{B} \gnl
\gcl{1} \grcm \gnl
\glmptb \gnot{\hspace{-0,34cm}\tau_{F,B}} \grmptb \gcl{1} \gnl
\gcl{1} \gmu \gnl
\gob{1}{B} \gob{2}{F} 
\gend$ (that is, $\gbeg{2}{3}
\got{3}{F} \gnl
\grmo \gvac{1} \gcl{1} \gnl 
\gob{1}{B} \gob{1}{F}
\gend$ is trivial). If $(B,F)$ is a comonad Hopf datum, $\rho'$ is trivial and \equref{proj F-bialg} and \equref{nat rcm} hold, 
then $(B, \phi'_1, \Delta_C' \hspace{0,2cm}\textnormal{canonical})$ is a right cowreath around $F$.

\item Set $\phi'_2= 
\gbeg{3}{5}
\got{1}{} \got{1}{F} \got{1}{B} \gnl
\grmo \gvac{1} \gcl{1} \gcl{1} \gnl
\gcl{1} \glmptb \gnot{\hspace{-0,34cm}\tau_{F,B}} \grmptb \gnl
\gmu \gcl{1} \gnl
\gob{2}{B} \gob{1}{F} 
\gend$ (that is, $\gbeg{2}{3}
\got{1}{B} \gnl
\grcm \gnl  
\gob{1}{B} \gob{1}{F}
\gend$ is trivial). If $(B,F)$ is a comonad Hopf datum, $B$ is a monad, \equref{proj B-bialg}, \equref{YBE2}, \equref{nat lcm}, \equref{YBE BB} and \equref{nat rho'} hold, 
then $(B, \phi'_2, \Delta_C')$ is a right cowreath around $F$. 

\end{enumerate}
\end{prop}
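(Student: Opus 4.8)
The plan is to check, for each part, the full list of defining axioms of a left wreath (resp.\ right cowreath) spelled out after \coref{(co)actions}: that the proposed $\psi_i$ (resp.\ $\phi'_i$) is a left monadic (resp.\ right comonadic) distributive law \equref{psi laws for B}, the 2-cell conditions \equref{2-cell mu_M} and \equref{2-cell eta_M}, and the monad laws \equref{monad law mu_M} and \equref{monad law eta_M}. Since, as observed just before the statement, the assertions for $\phi'$ and $\rho'$ are $\pi$-symmetric to those for $\psi$ and $\sigma$, I would prove only parts 1 and 2 and then obtain parts 3 and 4 by rotating the diagrams by $\pi$ and interchanging the roles of $B$ and $F$. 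Throughout, $\eta_M$ is taken to be the canonical pre-unit built from the units of $F$ and $B$.

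First I would dispatch the unit-type axioms, since they are routine. The second identity in \equref{psi laws for B}, the 2-cell condition \equref{2-cell eta_M}, and the monad law \equref{monad law eta_M} all concern the canonical $\eta_M$; they follow by short manipulations from the relations already guaranteed by the monad Hopf datum, namely \equref{F mod alg unit}, \equref{normalized 2-cocycle}, \equref{weak unity eta_M} and \equref{mod alg unity}, together with \coref{monad Hd conseq} and the (co)unit compatibilities in point 1 of \deref{mHd}. This leaves the multiplicative content: that $\psi_i$ is compatible with $\mu_B$ (the first identity of \equref{psi laws for B}), the 2-cell condition \equref{2-cell mu_M}, and the associativity \equref{monad law mu_M}.

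For part 1, $\sigma$ is trivial so $\mu_M$ is the plain multiplication of the monad $F$. Then \equref{monad law mu_M} is merely associativity of $F$, and the 2-cell condition \equref{2-cell mu_M} reduces, using that $\psi_1$ is a distributive law, to the module-algebra identity \equref{mod alg} already present in the datum. The only real work is thus to show that $\psi_1$ is a distributive law: I would expand the first identity of \equref{psi laws for B} with the explicit form of $\psi_1$, push $\tau_{B,F}$ past the left action by its naturality \equref{nat lm}, and resolve the resulting interaction of $\mu_B$ with the comultiplication of $B$ by the $B$-bialgebra compatibility \equref{proj B-bialg}; the action axiom \equref{mod alg} then closes the computation.

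For part 2, $\mu_M$ is the $\sigma$-twisted product, so the three multiplicative axioms carry the bulk of the argument. Associativity \equref{monad law mu_M} unfolds into the 2-cocycle condition \equref{2-cocycle condition}, where I would use coassociativity of the comonad $F$ together with the Yang--Baxter relation \equref{YBE FF} and the naturality \equref{nat sigma} to transport $\sigma$ past the interleaved comultiplications. The 2-cell condition \equref{2-cell mu_M} requires commuting $\psi_2$ through $\mu_M$, handled by \equref{YBE1} and the naturality \equref{nat rm} of $\tau_{B,F}$ over the right action; that $\psi_2$ is a distributive law invokes the $F$-bialgebra relation \equref{proj F-bialg} and the right-action axiom \equref{F mod alg}. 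The main obstacle is precisely this step for part 2: disentangling the several copies of $\tau$, the comultiplication of $F$, and the cocycle $\sigma$ in the correct order, so that each application of \equref{YBE1}, \equref{YBE FF}, \equref{nat rm} and \equref{nat sigma} lands where it is needed. Bookkeeping the order of these diagrammatic moves is the genuinely delicate part, whereas each individual rewrite is mechanical.
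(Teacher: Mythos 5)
Your overall strategy coincides with the paper's: verify the wreath axioms one by one, dispatch the unit-type axioms as routine consequences of the monad Hopf datum, and obtain parts 3 and 4 by $\pi$-symmetry. Your part 1 plan (with $\sigma$ trivial, the only real work is the $\psi$ axiom for $\psi_1$, via \equref{nat lm}, \equref{proj B-bialg} and \equref{mod alg}) and your treatment of \equref{monad law mu_M} in part 2 (via the 2-cocycle condition, coassociativity, \equref{YBE FF} and \equref{nat sigma}) both match what the paper does.

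There is, however, a genuine gap in your plan for the hardest step, namely the 2-cell condition \equref{2-cell mu_M} in part 2, which is precisely the step the paper works out in full. You claim this condition is ``handled by \equref{YBE1} and the naturality \equref{nat rm}''. Those two identities only govern how the various $\tau$'s slide past each other and past the right action; they cannot relate the cocycle $\sigma$ (sitting inside $\mu_M$) to the right $F$-action on $B$ (sitting inside $\psi_2$) and to $\mu_B$. The only hypothesis of a monad Hopf datum tying these three 2-cells together is the twisted-action identity \equref{weak action} (with its unit version \equref{weak action unity}), which you never invoke, and it is the engine of the paper's argument: the paper starts from \equref{weak action}, whiskers it with $FF$, pre-composes with the auxiliary 2-cell \equref{precompose} and post-composes with $\mu_F\times\Id_B$ to produce the intermediate identity \equref{L=R 1}, and then shows that its two sides equal the two sides of \equref{2-cell mu_M} -- the left side via \equref{YBE1}, the $\tau$ distributive laws and \equref{nat rm}, the right side via the $\tau$ distributive laws and \equref{proj F-bialg}. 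Relatedly, you have attached \equref{proj F-bialg} to the wrong axiom: the $\psi$ axioms for $\psi_2$ involve only $\Delta_F$, $\tau_{B,F}$, the right action and $\mu_B$ (no $\mu_F$ appears), so they follow from \equref{F mod alg}, \equref{F mod alg unit} and the monadic distributive laws of $\tau_{B,F}$; it is in \equref{2-cell mu_M}, where $\mu_M$ (containing $\mu_F$ and $\sigma$) meets $\psi_2$ (containing $\Delta_F$), that \equref{proj F-bialg} is indispensable. Since \equref{weak action} is available to you as part of the monad Hopf datum, the gap is repairable within your framework, but as written the rewriting scheme you propose for \equref{2-cell mu_M} would not close.
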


\begin{proof}
The parts 3 and 4 are $\pi$-symmetric to 1 and 2, respectively. The statement for $\psi_1$ is straightforward to prove, as well as that $\psi_2$ satisfies the $\psi$ axioms. We show only that 
\equref{proj F-bialg}, \equref{weak action}--\equref{weak action unity}, \equref{YBE1}, \equref{nat rm} imply \equref{2-cell mu_M}. The proof that the identities 
\equref{2-cocycle condition}, \equref{weak assoc. mu_M}, \equref{YBE FF}, \equref{nat sigma} imply \equref{monad law mu_M} is similar. The necessary identities 
for the (co)unities are proved straightforwardly. (Observe that in this setting the identities \equref{mod alg}--\equref{mod alg unity} are trivial and that 
\equref{weak assoc. mu_M}--\equref{weak unity eta_M} mean that $F$ is a monad. Consequently, in this setting $F$ is a left bimonad in $\K$.) 

We start with the identity \equref{weak action}, compose it from the left with $FF$, then from above with \equref{precompose} and from below with $\gbeg{3}{3}
\got{1}{F} \got{1}{F} \got{1}{B} \gnl
\gmu \gcl{1} \gnl
\gob{2}{F} \gob{1}{B} \gnl 
\gend$. By coassociativity of $F$ and distributive law of $\tau_{F,F}$ with respect to the comultiplication of $F$ this yields \equref{L=R 1}: \vspace{1,8cm}
\begin{center} \hspace{-1,3cm} 
\begin{tabular}{p{6cm}p{0cm}p{9cm}}
\begin{equation} \eqlabel{precompose}
\gbeg{3}{5}
\got{1}{B} \got{2}{F} \got{2}{F} \gnl
\gcl{1} \gcmu \gcmu \gnl
\glmptb \gnot{\hspace{-0,34cm}\tau_{B,F}} \grmptb \glmptb \gnot{\hspace{-0,34cm}\tau_{F,F}} \grmptb \gcl{1} \gnl
\gcl{1} \glmptb \gnot{\hspace{-0,34cm}\tau_{B,F}} \grmptb \gcl{1} \gcl{1} \gnl 
\gob{1}{F} \gob{1}{F} \gob{1}{B} \gob{1}{F} \gob{1}{F} \gnl
\gend
\end{equation} & & \vspace{-1,8cm}
\begin{equation} \eqlabel{L=R 1}
\gbeg{7}{13}
\got{1}{B} \got{4}{F} \got{2}{F} \gnl
\gcl{2} \gvac{1} \hspace{-0,34cm} \gwcm{3} \gvac{1} \hspace{-0,34cm} \gcmu \gnl
\gvac{2} \gcmu \hspace{-0,2cm} \gvac{1} \gcl{1} \gcmu \gcn{1}{6}{0}{0} \gnl %
\gvac{2} \hspace{-0,3cm} \glmptb \gnot{\hspace{-0,34cm}\tau_{B,F}} \grmptb \gcn{1}{1}{1}{2} \gvac{1} \hspace{-0,34cm} \glmptb \gnot{\hspace{-0,34cm}\tau_{F,F}} \grmptb \gcl{4} \gnl
\gvac{3} \hspace{-0,3cm} \gcl{1} \gcn{1}{1}{1}{2} \gvac{1} \hspace{-0,34cm} \glmptb \gnot{\hspace{-0,34cm}\tau_{F,F}} \grmptb \gcl{1} \gnl
\gvac{3} \gcn{2}{1}{2}{3} \glmptb \gnot{\hspace{-0,34cm}\tau_{B,F}} \grmptb \gcl{1} \gcl{1} \gnl  
\gvac{4} \gmu \glmptb \gnot{\hspace{-0,34cm}\tau_{B,F}} \grmptb \gcl{1} \gnl
\gvac{4} \gcn{2}{5}{2}{2} \gcl{1} \grmo \gcl{1} \gcn{1}{1}{3}{1} \gnl
\gvac{6} \gcl{1} \glmptb \gnot{\hspace{-0,34cm}\tau_{B,F}} \grmptb \gcn{1}{1}{2}{1} \gnl
\gvac{6} \gcl{1} \gcl{1} \grmo \gcl{1} \gnl
\gvac{6} \glmpt \gnot{\hspace{-0,34cm}\sigma} \grmptb \gcl{1} \gnl
\gvac{7} \gmu \gnl
%
\gob{10}{B} \gob{-4}{F}
\gend= 
\gbeg{7}{10}
\got{1}{B} \got{4}{F} \got{2}{F} \gnl
\gcl{2} \gvac{1} \hspace{-0,34cm} \gwcm{3} \gvac{1} \hspace{-0,34cm} \gcmu \gnl
\gvac{2} \gcmu \hspace{-0,2cm} \gvac{1} \gcl{1} \gcmu \gcn{1}{1}{0}{0} \gnl %
\gvac{2} \hspace{-0,3cm} \glmptb \gnot{\hspace{-0,34cm}\tau_{B,F}} \grmptb \gcn{1}{1}{1}{2} \gvac{1} \hspace{-0,34cm} \glmptb \gnot{\hspace{-0,34cm}\tau_{F,F}} \grmptb \gcl{1} \gcn{1}{1}{0}{1} \gnl
\gvac{3} \hspace{-0,3cm} \gcl{1} \gcn{1}{1}{1}{2} \gvac{1} \hspace{-0,34cm} \glmptb \gnot{\hspace{-0,34cm}\tau_{F,F}} \grmptb \glmptb \gnot{\hspace{-0,34cm}\tau_{F,F}} \grmptb \gcl{1}  \gnl
\gvac{3} \gcn{2}{1}{2}{3} \glmptb \gnot{\hspace{-0,34cm}\tau_{B,F}} \grmptb \gmu \glmpt \gnot{\hspace{-0,34cm}\sigma} \grmptb \gnl  
\gvac{4} \gmu \gcl{1} \gcn{1}{1}{2}{1} \gvac{2} \gcl{2} \gnl
\gvac{5} \gcn{1}{2}{0}{0} \grmo \gcl{1} \gnl
\gvac{6} \gwmu{5} \gnl
%
\gvac{4} \gob{2}{B} \gvac{2} \gob{1}{F}
\gend
\end{equation}
\end{tabular}
\end{center}
Now by the Yang-Baxter equation \equref{YBE1} the left side of \equref{L=R 1} equals: 
$$
\gbeg{7}{11}
\got{1}{B} \got{3}{F} \got{3}{F} \gnl
\gcl{2} \gvac{1} \hspace{-0,44cm} \gwcm{3} \gvac{1} \hspace{-0,34cm} \gcmu \gnl
\gvac{2} \gcmu \hspace{-0,2cm} \gvac{1} \gcl{1} \gcmu \gcn{1}{5}{0}{0} \gnl %
\gvac{2} \hspace{-0,3cm} \glmptb \gnot{\hspace{-0,34cm}\tau_{B,F}} \grmptb \gcl{1} \gvac{1} \hspace{-0,34cm} \glmptb \gnot{\hspace{-0,34cm}\tau_{F,F}} \grmptb \gcl{3} \gnl
\gvac{3} \hspace{-0,3cm} \gcl{3} \glmptb \gnot{\hspace{-0,34cm}\tau_{B,F}} \grmptb \gcn{1}{1}{2}{1} \gcn{1}{1}{2}{1} \gnl
\gvac{4} \gcl{1} \glmptb \gnot{\hspace{-0,34cm}\tau_{B,F}} \grmptb \gcl{1} \gnl  
\gvac{4} \glmptb \gnot{\hspace{-0,34cm}\tau_{F,F}} \grmptb \grmo \gcl{1} \gcn{1}{1}{4}{1} \gnl
\gvac{3} \gmu \gcl{1} \glmptb \gnot{\hspace{-0,34cm}\tau_{B,F}} \grmptb \gcn{1}{1}{3}{1} \gnl
\gvac{4} \gcn{1}{2}{0}{0} \glmpt \gnot{\hspace{-0,34cm}\sigma} \grmptb  \grmo \gcl{1} \gnl
\gvac{6} \gmu \gnl
\gob{8}{B} \gob{-2}{F}
\gend\stackrel{\tau}{=}
\gbeg{7}{11}
\gvac{1} \got{1}{B} \got{2}{F} \got{4}{F} \gnl
\gvac{1} \gcl{1} \gcmu \gvac{1}  \gcmu \gnl 
\gvac{1} \glmptb \gnot{\hspace{-0,34cm}\tau_{B,F}} \grmptb \hspace{-0,2cm} \gvac{1} \gcn{1}{1}{0}{1} \gcmu \gcn{1}{5}{0}{0} \gnl %
\gvac{1} \gcmu \gcn{1}{1}{0}{1} \glmptb \gnot{\hspace{-0,34cm}\tau_{F,F}} \grmptb \gcl{3} \gnl
\gvac{1} \gcl{1} \gcl{1} \glmptb \gnot{\hspace{-0,34cm}\tau_{B,F}} \grmptb \gcl{1} \gnl
\gvac{1} \gcl{1} \glmptb \gnot{\hspace{-0,34cm}\tau_{F,F}} \grmptb \grmo \gcl{1} \gnl  
\gvac{1} \gmu \gcl{2} \gcl{1} \gcn{1}{1}{3}{1} \gnl
\gvac{1} \gcn{1}{1}{2}{2} \gvac{2} \glmptb \gnot{\hspace{-0,34cm}\tau_{B,F}} \grmptb \gcn{1}{1}{2}{1} \gnl
\gvac{2} \gcn{1}{2}{0}{0} \glmpt \gnot{\hspace{-0,34cm}\sigma} \grmptb  \grmo \gcl{1} \gnl
\gvac{4} \gmu \gnl
\gob{4}{B} \gob{2}{F}
\gend\stackrel{\equref{nat rm}}{=}
\gbeg{7}{9}
\gvac{1} \got{1}{B} \got{2}{F} \got{3}{F} \gnl
\gvac{1} \gcl{1} \gcmu \gvac{1} \hspace{-0,22cm} \gcmu \gnl 
\gvac{2} \hspace{-0,22cm} \glmptb \gnot{\hspace{-0,34cm}\tau_{B,F}} \grmptb \gcl{1} \gcmu \gcn{1}{1}{0}{1} \gnl 
\gvac{2} \gcn{1}{1}{1}{0} \grmo \gcl{1} \gcn{1}{1}{3}{1} \gcn{1}{2}{3}{1} \gvac{1} \gcl{1} \gnl 
\gvac{1} \gcmu \glmptb \gnot{\hspace{-0,34cm}\tau_{B,F}} \grmptb \gvac{1}  \gcn{1}{2}{3}{1} \gnl
\gvac{1} \gcl{1} \glmptb \gnot{\hspace{-0,34cm}\tau_{F,F}} \grmptb  \glmptb \gnot{\hspace{-0,34cm}\tau_{B,F}} \grmptb  \gnl
\gvac{1} \gmu \glmpt \gnot{\hspace{-0,34cm}\sigma} \grmptb  \grmo \gcl{1} \gnl
\gvac{1} \gcn{1}{1}{2}{2} \gvac{2} \gmu \gnl
\gob{4}{B} \gob{2}{F}
\gend\stackrel{\tau}{=}
\gbeg{7}{12}
\gvac{2} \got{1}{B} \got{2}{F} \got{1}{F} \gnl
\gvac{2} \gcl{1} \gcmu \gcl{2} \gnl
\gvac{2} \glmptb \gnot{\hspace{-0,34cm}\tau_{B,F}} \grmptb\gcl{1} \gnl
\gvac{2} \gcl{1} \grmo \gcl{1} \gcn{1}{1}{3}{2} \gnl
\gvac{2} \gcl{1} \gcl{1} \gcmu \gnl
\gvac{2} \gcn{1}{2}{1}{-2}  \glmptb \gnot{\hspace{-0,34cm}\tau_{B,F}} \grmptb \gcl{1} \gnl
\gvac{3} \gcn{1}{1}{1}{0} \grmo \gcl{1} \gnl
\gcmu \gcmu \gcl{3} \gnl
\gcl{1} \glmptb \gnot{\hspace{-0,34cm}\tau_{F,F}} \grmptb \gcl{1} \gnl
\gmu \glmpt \gnot{\hspace{-0,34cm}\sigma} \grmptb \gnl
\gcn{1}{1}{2}{2} \gvac{2} \gmu \gnl
\gob{2}{B}  \gob{4}{F}
\gend
$$
which is the left hand-side of \equref{2-cell mu_M} in the present setting. In the first and the third equation we applied the distributive law of $\tau_{B,F}$ with respect 
to the comultiplication of $F$. The right hand-side of \equref{L=R 1}, by the distributivity law of $\tau_{B,F}$ with respect to the multiplication of $F$, equals: 
$$
\gbeg{8}{10}
\got{1}{B} \got{4}{F} \got{2}{F} \gnl
\gcl{4} \gvac{1} \hspace{-0,34cm} \gwcm{3} \gvac{1} \hspace{-0,34cm} \gcmu \gnl
\gvac{2} \gcmu \hspace{-0,2cm} \gvac{1} \gcl{1} \gcmu \gcn{1}{1}{0}{0} \gnl %
\gvac{3} \gcn{1}{1}{0}{0} \gcn{1}{1}{0}{1} \glmptb \gnot{\hspace{-0,34cm}\tau_{F,F}} \grmptb \gcl{1} \gcn{1}{1}{0}{1} \gnl
\gvac{3} \hspace{-0,3cm} \gcn{1}{1}{1}{2} \gvac{1} \hspace{-0,34cm} \glmptb \gnot{\hspace{-0,34cm}\tau_{F,F}} \grmptb \glmptb \gnot{\hspace{-0,34cm}\tau_{F,F}} \grmptb \gcl{1}  \gnl
\gvac{2} \gcn{2}{1}{2}{4} \gmu \gmu \glmpt \gnot{\hspace{-0,34cm}\sigma} \grmptb \gnl  
\gvac{4} \hspace{-0,22cm} \glmptb \gnot{\hspace{-0,34cm}\tau_{B,F}} \grmptb \gcn{1}{1}{3}{1} \gvac{2} \gcn{1}{1}{2}{2} \gnl
\gvac{4} \gcl{2} \grmo \gcl{1} \gvac{3} \gcn{1}{1}{2}{1} \gnl
\gvac{5} \gwmu{5} \gnl
\gvac{4} \gob{1}{B} \gvac{2} \gob{1}{F}
\gend\stackrel{\tau}{=}
\gbeg{8}{10}
\got{1}{B} \got{4}{F} \got{3}{F} \gnl
\gcl{4} \gvac{1} \hspace{-0,34cm} \gwcm{3} \gvac{1} \gcmu \gnl
\gvac{2} \gcl{1} \gvac{1} \glmptb \gnot{\tau_{F,F}} \gcmp \grmptb \gcl{1} \gnl
\gvac{2} \hspace{-0,3cm} \gcmu \gcmu \gvac{1} \hspace{-0,22cm} \glmpt \gnot{\hspace{-0,34cm}\sigma} \grmptb \gnl  
\gvac{3} \gcn{1}{1}{0}{0} \hspace{-0,22cm} \glmptb \gnot{\hspace{-0,34cm}\tau_{F,F}} \grmptb \gcl{1} \gvac{1} \gcn{1}{3}{2}{2} \gnl
\gvac{2} \gcn{1}{1}{1}{2} \gmu \gmu \gnl
\gvac{3} \hspace{-0,22cm} \glmptb \gnot{\hspace{-0,34cm}\tau_{B,F}} \grmptb \gcn{1}{1}{3}{1} \gnl
\gvac{3} \gcl{2} \grmo \gcl{1} \gvac{3} \gcn{1}{1}{3}{1} \gnl
\gvac{4} \gwmu{5} \gnl
\gvac{3} \gob{1}{B} \gvac{2} \gob{1}{F}
\gend \stackrel{\equref{proj F-bialg}}{=}
\gbeg{6}{9}
\got{2}{B} \got{1}{F} \got{3}{F} \gnl
\gcn{1}{4}{2}{2} \gvac{1} \hspace{-0,34cm} \gcmu \gcmu \gnl
\gvac{2} \gcl{1} \glmptb \gnot{\hspace{-0,34cm}\tau_{F,F}} \grmptb \gcl{1} \gnl
\gvac{2} \gmu \glmpt \gnot{\hspace{-0,34cm}\sigma} \grmptb \gnl  
\gvac{2} \gcmu \gvac{1} \gcl{3} \gnl
\gvac{1} \glmptb \gnot{\hspace{-0,34cm}\tau_{B,F}} \grmptb \gcl{1} \gnl
\gvac{1} \gcl{2} \grmo \gcl{1} \gnl
\gvac{2} \gwmu{4} \gnl
\gvac{1} \gob{1}{B} \gvac{1} \gob{2}{F}
\gend 
$$
and this is precisely the right hand-side of \equref{2-cell mu_M}. In the first equation we applied the distributivity law of $\tau_{F,F}$ with respect to the comultiplication of $F$. 
\qed\end{proof}

The statement in the above Proposition in the setting of $\psi_2$ can be reformulated as follows:

\begin{prop} \prlabel{Sw iff}
Suppose that $(B,F, \psi_2, \mu_M)$ is a wreath system (equivalently, that $F$ is a monad and that 
$B$ is a right $F$-module monad, $\sigma$ is a normalized 2-cocycle and the $F$-action on $B$ is twisted by $\sigma$) and that $F$ is a left bimonad. 
Then $(B,F, \psi_2, \mu_M)$ is a wreath. 

Consequently, $(B,F, \psi_2, \mu_M)$ is a wreath if and only if it is a wreath system and $F$ is a left bimonad in $\K$. 
\end{prop}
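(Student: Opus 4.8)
The plan is to treat \prref{Sw iff} as a repackaging of part~2 of \prref{psi_1,2}, so that the essential work is a translation between the two hypothesis lists, supplemented by one genuinely new verification in the converse direction. First I would prove the implication \emph{wreath system and $F$ a left bimonad $\Rightarrow$ wreath}. Since we are in the $\psi_2$ situation, the left $B$-action on $F$ is trivial, so \equref{mod alg}--\equref{mod alg unity} hold automatically; the four clauses comprising a wreath system are then precisely \equref{weak assoc. mu_M}--\equref{weak unity eta_M} (that $F$ is a monad), \equref{F mod alg}--\equref{F mod alg unit} (that $B$ is a right $F$-module monad), \equref{weak action}--\equref{weak action unity} (that the $F$-action on $B$ is $\sigma$-twisted), and \equref{2-cocycle condition}--\equref{normalized 2-cocycle} (that $\sigma$ is a normalized $2$-cocycle). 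Together with \equref{epsilon to sigma}, which is forced by normalization of $\sigma$ and canonicity of $\eta_M$ exactly as in its original derivation, this says that $(B,F)$ is a monad Hopf datum. The hypothesis that $F$ is a left bimonad then supplies, through \deref{bimonad}, both the comonad structure on $F$ and the bimonad compatibility \equref{bimonad in K F}; in the presence of the concrete shape \equref{lambda F} of $\lambda_F$ the latter is exactly \equref{proj F-bialg}.

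It remains, still in the forward direction, to produce the Yang--Baxter equations \equref{YBE1}, \equref{YBE FF} and the naturality conditions \equref{nat rm}, \equref{nat sigma} required by \prref{psi_1,2}(2). These I would derive from the fact that the $\tau$'s are, by assumption, left and right monadic and comonadic distributive laws, combined with the bimonad coherence of $F$: \equref{YBE FF} and \equref{nat sigma} involve only the internal data $\tau_{F,F}$ and $\sigma$ of $F$ and should fall out of the distributive-law axioms together with the compatibility of $\lambda_F$ with $\mu$ and $\Delta$, whereas \equref{YBE1} and \equref{nat rm}, which couple $\tau_{B,F}$ to the right $F$-action on $B$, should follow from the distributive-law behaviour of $\tau_{B,F}$ over the module structure. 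With all the hypotheses of \prref{psi_1,2}(2) assembled, that proposition delivers that $(B,F,\psi_2,\mu_M)$ is a wreath.

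For the converse, \emph{wreath $\Rightarrow$ wreath system and $F$ a left bimonad}, I would reverse the computations of the subsection on structures inside a paired wreath: applying $\Epsilon_B$ to the monadic wreath axioms and, where needed, $\eta_F$, recovers \equref{weak assoc. mu_M}--\equref{weak unity eta_M}, \equref{F mod alg}--\equref{F mod alg unit}, \equref{weak action}--\equref{weak action unity} and \equref{2-cocycle condition}--\equref{normalized 2-cocycle}, that is, the wreath system. That $F$ is a left bimonad I would argue by noting that $F$ is a monad (canonical restriction of $\mu_M,\eta_M$), that it carries the comultiplication and counit of the ambient monad Hopf datum, and that the associativity axiom \equref{monad law mu_M} for $\mu_M$, read through the concrete form \equref{mu_M paired} of $\mu_M$ and the distributive-law property of $\tau_{F,F}$, should force coassociativity together with the bialgebra compatibility \equref{proj F-bialg}, i.e. the bimonad condition \equref{bimonad in K F} for $F$ with $\lambda_F$ as in \equref{lambda F}.

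The step I expect to be the main obstacle is precisely the handling of the Yang--Baxter and naturality conditions. \deref{bimonad} does not mention \equref{YBE FF}, \equref{YBE1}, \equref{nat rm} or \equref{nat sigma} explicitly, so the crux is to show that they are genuinely subsumed by ``$F$ a left bimonad'' together with the standing distributive-law axioms on the $\tau$'s, rather than being independent assumptions; dually, in the converse, the delicate point is extracting the bialgebra compatibility \equref{proj F-bialg} from the bare wreath axioms. Verifying both matchings is where the bimonad coherence \equref{bimonad in K F} and the monadic/comonadic distributive-law behaviour of the $\tau$'s must be used in full.
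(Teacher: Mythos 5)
Your overall strategy---reading \prref{Sw iff} as a repackaging of part 2 of \prref{psi_1,2}---is exactly the paper's: the proposition is introduced there with ``can be reformulated as follows'' and carries no independent proof, the dictionary being that the wreath-system clauses are the monad-Hopf-datum identities of \deref{mHd} in the $\psi_2$ setting (where \equref{mod alg}--\equref{mod alg unity} are trivial and \equref{weak assoc. mu_M}--\equref{weak unity eta_M} say $F$ is a monad), while ``$F$ a left bimonad'' packages ``$F$ a comonad'' together with the bimonad compatibility, i.e.\ \equref{proj F-bialg} for the concrete $\lambda_F$. Your identification of \equref{epsilon to sigma} as a consequence of normalization of $\sigma$ and canonicity of $\eta_M$, and your converse step recovering the wreath datum by applying counits to the wreath axioms (\rmref{wreath datum}), also match the paper's intent.

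However, there is a genuine gap at precisely the step you flag as the main obstacle, and it cannot be repaired in the way you propose. You plan to \emph{derive} the Yang--Baxter equations \equref{YBE1}, \equref{YBE FF} and the naturality conditions \equref{nat rm}, \equref{nat sigma} from the distributive-law axioms on the $\tau$'s together with the bimonad coherence of $F$. This is false in general: a left and right (co)monadic distributive law $\tau_{F,F}$ need not satisfy the braid relation \equref{YBE FF}, and nothing in \deref{bimonad} or in the distributive-law axioms constrains $\tau_{F,F}$ against the independent datum $\sigma$ (which is what \equref{nat sigma} does) or $\tau_{B,F}$ against the right action (which is what \equref{nat rm} does). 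These four identities are load-bearing in the paper's proof of \prref{psi_1,2}(2)---they are used there to establish \equref{2-cell mu_M} and \equref{monad law mu_M}---and this is exactly why the paper lists them as separate hypotheses both in \prref{psi_1,2} and again in \thref{sufficient for bimonad}; if they were consequences of the distributive laws, they would not appear in those hypothesis lists. The paper's reformulation does not derive them either: in \prref{Sw iff} they remain standing assumptions on the $\tau$'s inherited from \prref{psi_1,2}(2), and only the remaining hypotheses (the monad-Hopf-datum identities, the comonad structure, and \equref{proj F-bialg}) are repackaged into ``wreath system plus $F$ a left bimonad.'' The same remark applies, more mildly, to your converse direction, where you hope the bare wreath axioms ``force'' \equref{proj F-bialg}: under the standing hypotheses this is immediate, but without them it is not extracted from the wreath axioms, which never see $\Delta_F$ except through the fixed shapes of $\psi_2$ and $\mu_M$. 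So the correct fix is to keep the Yang--Baxter and naturality conditions in force as ambient hypotheses rather than to prove them; as written, your forward direction fails.
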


\bigskip

(In the following Proposition we will label the 2-cell $\psi$ as $\psi_4$, as the notation $\psi_3$ we reserved for a 2-cell $\psi$ appearing in a mixed biwreath-like object in \cite{Femic5}.) 

\begin{prop} \prlabel{psi_4}
Set $\psi_4=
\gbeg{4}{5}
\got{2}{B} \got{2}{F} \gnl
\gcmu \gcmu \gnl
\gcl{1} \glmptb \gnot{\hspace{-0,34cm}\tau_{B,F}} \grmptb \gcl{1} \gnl
\glm \grmo \gcl{1} \gnl
\gob{1}{} \gob{1}{F} \gob{1}{B} 
\gend$ 
and suppose we are in a setting where the following assumptions hold true: 
\begin{enumerate}
\item $(B,F)$ is a monad Hopf datum and $F$ is a comonad; 
\item there are 2-cells 
$
\gbeg{2}{3}
\got{3}{F} \gnl
\grmo \gvac{1} \gcl{1} \gnl
\gob{1}{B} \gob{1}{F}
\gend,
\hspace{0,3cm}
\gbeg{2}{3}
\got{1}{B} \gnl
\grcm \gnl
\gob{1}{B} \gob{1}{F}
\gend,
\hspace{0,3cm}
\gbeg{3}{3}
\got{3}{F} \gnl
\glmpb \gnot{\hspace{-0,34cm}\rho'} \grmptb \gnl
\gob{1}{B}\gob{1}{B}
\gend
$ such that \equref{proj F-bialg}, \equref{proj B-bialg} and the second and third equation in \equref{1-3} hold; 
\item $B$ is a comonad (or, \equref{weak coass. Delta_C'}--\equref{weak counit Epsilon_C'} hold and 
$\gbeg{2}{3}
\got{1}{B} \gnl
\grcm \gnl 
\gob{1}{B} \gob{1}{F} \gnl
\gend$ is trivial or $\rho'$ is trivial (hence $\Delta_C'$ is canonical));
\item $\sigma$ is trivial (hence $\mu_M'$ is canonical);
\item $\tau_{B,F}$ is natural with respect to 
$\gbeg{2}{3}
\got{1}{B} \got{1}{F} \gnl
\grmo \gcl{1} \gnl 
\gob{1}{B} \gnl
\gend$ and 
$\gbeg{2}{3}
\got{1}{B} \got{1}{F} \gnl
\glm \gnl 
\gob{3}{F} \gnl
\gend$ (recall \equref{nat rm}, \equref{nat lm}).
\end{enumerate}
Then $(F, \psi_4, \mu_M \hspace{0,2cm}\textnormal{canonical})$ is a left wreath around $B$. 
\end{prop}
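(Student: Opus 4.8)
The plan is to verify, one by one, the defining identities of a left wreath for the triple $(F,\psi_4,\mu_M)$: that $\psi_4$ is a left monadic distributive law (the axioms \equref{psi laws for B}), the two 2-cell conditions \equref{2-cell mu_M} and \equref{2-cell eta_M}, and the two monad laws \equref{monad law mu_M} and \equref{monad law eta_M}. Before doing so, I would record the simplifications that the hypotheses force. Since $\sigma$ is trivial, the formula \equref{mu_M paired} collapses so that $\mu_M$ and $\eta_M$ are canonical, and the cocycle identities \equref{2-cocycle condition}, \equref{normalized 2-cocycle} together with the naturality \equref{nat sigma} hold vacuously. Combined with the monad Hopf datum identities \equref{weak assoc. mu_M} and \equref{weak unity eta_M}, this makes $F$ a genuine monad, so that $\mu_M$ is simply the multiplication of $F$ with the unit of $B$ adjoined.

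Next I would dispatch the two easy families. The $\eta_M$-conditions \equref{2-cell eta_M} and \equref{monad law eta_M} reduce, $\eta_M$ being canonical, to the unitality half of \equref{psi laws for B} and the unitality of $F$, and are checked directly from the compatibility \equref{(co)unital mixed} of $\eta_B$ with $\Delta_B$, the unitality of the actions $\glm$ and $\grmo$, and the unital distributive-law property of $\tau_{B,F}$ (together with \coref{monad Hd conseq} for the $F$-unit). The unit axiom of $\psi_4$, the second identity of \equref{psi laws for B}, follows from the same ingredients, while the hexagon, the first identity of \equref{psi laws for B}, is obtained from associativity of $\mu_B$, the module axioms for $\glm$ and $\grmo$, the distributivity of $\tau_{B,F}$ and $\tau_{B,B}$ over $\mu_B$ and $\Delta_B$, and the naturality \equref{nat lm} of $\tau_{B,F}$ with respect to the $B$-action.

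The heart of the argument is the 2-cell condition \equref{2-cell mu_M}, together with the parallel monad law \equref{monad law mu_M}. Here the strategy is to run simultaneously the two string-diagram computations isolated in \prref{psi_1,2}. For the $\grmo$-branch I would reproduce the $\psi_2$-style chain of manipulations: expand via \equref{weak action}, pass through the intermediate identity \equref{L=R 1} obtained after precomposing with \equref{precompose}, and then use the distributivity of the $\tau$'s over the (co)multiplications of $F$ together with the naturality \equref{nat rm} and the bialgebra-projection identity \equref{proj F-bialg}. For the $\glm$-branch I would reproduce the argument from part (1) of \prref{psi_1,2}, feeding in the projection identity \equref{proj B-bialg} and the naturality \equref{nat lm}. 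The two branches are glued along the third equation of \equref{1-3}, which governs how $\glm$ commutes with the comultiplication of $F$, and along the second equation of \equref{1-3}, which controls the $B$-output; the triviality of $\sigma$ is precisely what lets the $\sigma$-dependent steps of the $\psi_2$ computation (those invoking \equref{nat sigma}, \equref{YBE FF} and \equref{YBE1}) be dropped. Finally, the monad law \equref{monad law mu_M}, with $\mu_M$ canonical and $F$ already a monad, reduces to the associativity of $F$ reinforced by the hexagon for $\psi_4$ established above, the $B$-output coherence again being supplied by \equref{proj B-bialg}.

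I expect the main obstacle to be exactly the bookkeeping in \equref{2-cell mu_M}: carrying the extra $\glm$-strand through the $\psi_2$ diagram chase without spoiling the cancellations, that is, checking that the $\glm$- and $\grmo$-branches can be cleanly separated, transformed by their respective projection and naturality identities, and then recombined. Making that separation precise, and confirming that no Yang--Baxter relation beyond what is already encoded in \equref{proj F-bialg} and \equref{proj B-bialg} is silently required once $\sigma$ is trivial, is the delicate point of the proof.
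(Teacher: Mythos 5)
There is a genuine gap, and it lies exactly where you locate the ``easy'' part. The first identity of \equref{psi laws for B} --- the distributivity of $\psi_4$ over the multiplication of $B$ --- does not follow from associativity of $\mu_B$, the module axioms, the distributive laws for $\tau$, and \equref{nat lm}. The left-hand side of that axiom contains $\Delta_B\circ\mu_B$ (since $\psi_4$ begins by comultiplying both of its inputs), while the right-hand side contains two separate copies of $\Delta_B$; no combination of associativity, module axioms and $\tau$-distributivity relates these, and the only available bridge is the projection identity \equref{proj B-bialg}. Likewise, on the right-hand side the $F$-strand produced by the inner copy of $\psi_4$ is comultiplied again inside the outer copy, so one must push $\Delta_F$ through the left $B$-action on $F$, which is precisely the third equation in \equref{1-3}; the intermediate steps moreover use \equref{F mod alg}, \equref{nat rm}, and the coassociativity of $B$ --- this is why hypotheses 2 and 3 of the proposition are present at all. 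Under your claimed derivation those hypotheses would be superfluous, which is already a sign that the derivation cannot work. In the paper this hexagon is the entire displayed computation of the proof (some nine nontrivial steps), not a routine check.

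Your treatment of the 2-cell condition \equref{2-cell mu_M} is closer to the mark: with $\mu_M$ canonical it is the second hexagon (distributivity of $\psi_4$ over the multiplication of $F$), and the inputs you list (\equref{proj F-bialg}, \equref{nat rm}, gluing along \equref{1-3}) are roughly the right ones. But the paper does not prove it by interleaving the $\psi_1$- and $\psi_2$-computations of \prref{psi_1,2}; it observes that this identity is the $\alpha$-symmetric image of the first hexagon (interchange the r\^oles of $B$ and $F$ and mirror the diagrams left--right), so it is obtained from the hard computation by replacing \equref{proj B-bialg}, \equref{F mod alg}, \equref{nat rm} and the third equation of \equref{1-3} by \equref{proj F-bialg}, \equref{mod alg}, \equref{nat lm} and the second equation of \equref{1-3}. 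The architecture of your proposal is therefore inverted: the identity you dismiss in one sentence is the one requiring the work, while the identity you call the heart is the one the paper gets by symmetry. Until the first hexagon is actually established with the full set of identities above, the proposal does not constitute a proof.
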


\begin{proof}
Let us prove that $\psi_4$ satisfies the $\psi$ axioms. Indeed: 
$$\hspace{-0,2cm}
\gbeg{3}{5}
\got{1}{B} \got{1}{B} \got{1}{F} \gnl
\gmu \gcn{1}{1}{1}{0} \gnl
\gvac{1} \hspace{-0,34cm} \glmptb \gnot{\hspace{-0,34cm}\widetilde{\psi}} \grmptb \gnl
\gvac{1} \gcl{1} \gcl{1} \gnl
\gvac{1} \gob{1}{F} \gob{1}{B} 
\gend=
\scalebox{0.84}[0.84]{
\gbeg{5}{6}
\got{1}{B} \got{1}{B} \got{2}{F}\gnl
\gmu \gcn{1}{1}{2}{2} \gnl
\gcmu \gcmu \gnl
\gcl{1} \glmptb \gnot{\hspace{-0,34cm}\tau_{B,F}} \grmptb \gcl{1} \gnl
\glm \grmo \gcl{1} \gnl
\gvac{1} \gob{1}{F} \gob{1}{B} 
\gend}
\stackrel{\equref{proj B-bialg}}{=}
\scalebox{0.84}[0.84]{
\gbeg{9}{9}
\gvac{1} \got{1}{B} \gvac{2} \got{1}{B} \got{3}{F}\gnl
\gwcm{3} \gwcm{3} \gcl{3} \gnl
\gcl{1} \gvac{1} \gcl{1} \grcm \gcl{3} \gnl
\gcl{1} \gvac{1} \glmptb \gnot{\hspace{-0,34cm}\tau_{B,B}} \grmptb \gcl{1}\gnl
\gcl{1} \gvac{1} \gcl{1} \grmo \gcl{1} \gvac{1} \gcn{1}{1}{3}{2} \gnl
\gwmu{3} \gwmu{3} \hspace{-0,4cm} \gcmu \gnl
\gvac{2} \gcn{1}{1}{1}{5} \gvac{2} \glmptb \gnot{\hspace{-0,34cm}\tau_{B,F}} \grmptb \gcl{1} \gnl
\gvac{4} \glm \grmo \gcl{1} \gnl
\gvac{5} \gob{1}{F} \gob{1}{B} 
\gend}\stackrel{\tau}{=}
\scalebox{0.84}[0.84]{
\gbeg{9}{10}
\gvac{1} \got{1}{B} \gvac{2} \got{1}{B} \got{3}{F}\gnl
\gwcm{3} \gwcm{3} \gcl{3} \gnl
\gcl{1} \gvac{1} \gcl{1} \grcm \gcl{2} \gnl
\gcl{1} \gvac{1} \glmptb \gnot{\hspace{-0,34cm}\tau_{B,B}} \grmptb \gcl{1} \gnl
\gcl{1} \gvac{1} \gcl{1} \grmo \gcl{1} \gvac{1}  \gcn{1}{1}{1}{0} \hspace{-0,22cm} \gcmu  \gnl
\gvac{1} \hspace{-0,2cm} \gwmu{3} \gcn{1}{1}{1}{2} \gvac{1} \hspace{-0,34cm} \glmptb \gnot{\hspace{-0,34cm}\tau_{B,F}} \grmptb \gcl{2} \gnl
\gvac{2} \gcn{1}{1}{2}{5} \gvac{2} \glmptb \gnot{\hspace{-0,34cm}\tau_{B,F}} \grmptb \gcl{1} \gnl
\gvac{4} \glm \gmu \gcn{1}{1}{1}{0} \gnl
\gvac{5} \gcl{1} \gvac{1} \hspace{-0,34cm} \grmo \gcl{1} \gnl
\gvac{5} \gob{2}{F} \gob{1}{B} 
\gend}
$$

$$\stackrel{\equref{F mod alg}}{=}
\scalebox{0.84}[0.84]{
\gbeg{10}{13}
\gvac{1} \got{1}{B} \gvac{2} \got{1}{B} \got{3}{F}\gnl
\gwcm{3} \gwcm{3} \gcl{2} \gnl
\gcl{1} \gvac{1} \gcl{1} \grcm \gcl{2} \gnl
\gcl{1} \gvac{1} \glmptb \gnot{\hspace{-0,34cm}\tau_{B,B}} \grmptb \gcl{1} \gvac{1} \gcn{1}{1}{1}{2}  \gnl
\gcl{1} \gvac{1} \gcl{1} \grmo \gcl{1} \gvac{1} \gcn{1}{1}{1}{0} \hspace{-0,22cm} \gwcm{3}  \gnl
\gvac{1} \hspace{-0,3cm} \gwmu{3} \gcn{1}{1}{1}{2} \gvac{1} \hspace{-0,34cm} \glmptb \gnot{\hspace{-0,34cm}\tau_{B,F}} \grmptb \gvac{1} \gcl{1} \gnl
\gvac{2} \gcn{1}{1}{2}{5} \gvac{2} \glmptb \gnot{\hspace{-0,34cm}\tau_{B,F}} \grmptb \gcn{1}{1}{1}{2} \gvac{1} \gcn{1}{1}{1}{2}\gnl
\gvac{4} \glm \gcl{1} \gcmu \gcmu \gnl
\gvac{5} \gcl{4} \gcl{1} \gcl{1} \glmptb \gnot{\hspace{-0,34cm}\tau_{B,F}} \grmptb \gcl{1} \gnl
\gvac{6} \gcn{1}{1}{1}{3} \glm \grmo \gcl{1} \gnl
\gvac{7} \grmo \gcl{1} \gvac{1} \gcl{1} \gnl
\gvac{7} \gwmu{3} \gnl
\gvac{5} \gob{1}{F} \gob{5}{B} 
\gend}
\stackrel{\tau}{\stackrel{coass. F}{\stackrel{B\x mod.}{=}}}
\scalebox{0.84}[0.84]{
\gbeg{11}{12}
\got{2}{B} \gvac{1} \got{1}{B} \got{7}{F}\gnl
\gcmu \gwcm{3} \gvac{2} \gcl{2} \gnl
\gcl{1} \gcl{1} \grcm \gcl{2} \gnl
\gcl{1} \glmptb \gnot{\hspace{-0,34cm}\tau_{B,B}} \grmptb \gcl{1} \gvac{2} \gwcm{3} \gnl
\gcl{3} \gcl{3} \grmo \gcl{1} \gvac{1} \hspace{-0,34cm} \gcmu \gcmu \gcn{1}{3}{2}{2} \gnl
\gvac{2} \gcn{1}{1}{2}{2} \gvac{1} \gcl{1} \glmptb \gnot{\hspace{-0,34cm}\tau_{B,F}} \grmptb \gcl{1} \gnl
\gvac{3} \gcn{1}{1}{0}{1} \glmptb \gnot{\hspace{-0,34cm}\tau_{B,F}} \grmptb \gcl{1} \gcl{1} \gnl
\gcn{1}{2}{2}{5} \hspace{-0,4cm} \gcn{1}{1}{4}{5} \gvac{2} \glmptb \gnot{\hspace{-0,34cm}\tau_{B,F}} \grmptb \gcl{1} \glmptb \gnot{\hspace{-0,34cm}\tau_{B,F}} \grmptb \gcn{1}{1}{2}{1} \gnl
\gvac{3} \glm \gcn{1}{1}{1}{3}  \glm \grmo \gcl{1} \gnl
\gvac{3} \glm \gvac{1} \grmo \gcl{1} \gvac{1} \gcl{1} \gnl
\gvac{4} \gcl{1} \gvac{1} \gwmu{3} \gnl
\gvac{4} \gob{1}{F} \gob{5}{B} 
\gend}
\stackrel{coass. B}{=}
\scalebox{0.84}[0.84]{
\gbeg{11}{10}
\got{2}{B} \gvac{2} \got{1}{B} \got{6}{F}\gnl
\gcmu \gvac{1} \gwcm{3} \gvac{1} \hspace{-0,34cm} \gwcm{3} \gnl
\gvac{1} \hspace{-0,3cm} \gcl{1} \gcl{1} \gwcm{3} \gcl{1} \gcmu \gcn{1}{2}{2}{2} \gnl
\gvac{1} \gcl{1} \gcl{1} \grcm \gcl{1} \glmptb \gnot{\hspace{-0,34cm}\tau_{B,F}} \grmptb \gcl{1} \gnl
\gvac{1} \gcl{1} \glmptb \gnot{\hspace{-0,34cm}\tau_{B,B}} \grmptb \gcl{1} \glmptb \gnot{\hspace{-0,34cm}\tau_{B,F}} \grmptb \glmptb \gnot{\hspace{-0,34cm}\tau_{B,F}} \grmptb \gcn{1}{1}{2}{1} \gnl
\gvac{1} \gcl{2} \gcl{2} \grmo \gcl{1} \gvac{1} \gcn{1}{1}{1}{-1} \glm \grmo \gcl{1} \gnl
\gvac{3} \glmptb \gnot{\hspace{-0,34cm}\tau_{B,F}} \grmptb \gcn{1}{1}{5}{1} \gcn{1}{2}{5}{3} \gnl
\gvac{1} \gcn{1}{1}{1}{3} \glm \grmo \gcl{1} \gnl
\gvac{2} \glm \gwmu{4}  \gnl
\gvac{3} \gob{1}{F} \gob{4}{B} 
\gend}=
$$

$$
\stackrel{\equref{nat rm}}{\stackrel{\tau}{=}}
\scalebox{0.84}[0.84]{
\gbeg{11}{11}
\got{2}{B} \gvac{2} \got{2}{B} \got{4}{F}\gnl
\gcmu \gvac{1} \gwcm{4} \gcmu \gnl
\gcl{5} \gcl{1} \gwcm{3} \gvac{1} \glmptb \gnot{\hspace{-0,34cm}\tau_{B,F}} \grmptb  \gcl{1} \gnl
\gvac{1} \gcl{1} \grcm \gcn{1}{1}{1}{2} \gvac{1} \hspace{-0,32cm} \gcmu \hspace{-0,22cm} \grmo \gcl{1} \gvac{1} \gnl
\gvac{2} \glmptb \gnot{\hspace{-0,34cm}\tau_{B,B}} \grmptb \gcn{1}{1}{1}{2} \gvac{1} \hspace{-0,3cm} \glmptb \gnot{\hspace{-0,34cm}\tau_{B,F}} \grmptb \gcl{1} \gcn{1}{2}{0}{0} \gnl
\gvac{2} \gcn{1}{1}{2}{2} \gcn{2}{1}{2}{3} \glmptb \gnot{\hspace{-0,34cm}\tau_{F,F}} \grmptb \glm  \gnl
\gvac{3} \gcn{1}{1}{0}{1} \glmptb \gnot{\hspace{-0,34cm}\tau_{B,F}} \grmptb \gcl{1} \gcn{1}{2}{3}{-1} \gcn{1}{1}{2}{2}  \gnl
\gvac{2} \gcn{1}{1}{0}{3}  \glm \grmo \gcl{1} \gvac{3} \gcn{1}{2}{0}{-1} \gnl
\gvac{3} \glm \grmo \gcl{1} \gnl
\gvac{4} \gcl{1} \gwmu{4}  \gnl
\gvac{4} \gob{1}{F} \gob{4}{B} 
\gend}
\stackrel{\equref{nat lm}}{\stackrel{F\x mod.}{=}}
\scalebox{0.84}[0.84]{
\gbeg{11}{11}
\got{2}{B} \gvac{2} \got{2}{B} \got{4}{F}\gnl
\gcmu \gvac{1} \gwcm{4} \gcmu \gnl
\gcl{4} \gcl{3} \gwcm{3} \gvac{1} \glmptb \gnot{\hspace{-0,34cm}\tau_{B,F}} \grmptb  \gcl{1} \gnl
\gvac{2} \grcm \gcn{1}{1}{1}{2} \gvac{1} \hspace{-0,32cm} \gcmu \hspace{-0,22cm} \grmo \gcl{1} \gvac{1} \gnl
\gvac{3} \gcl{1} \gcn{1}{1}{1}{2} \gvac{1} \hspace{-0,3cm} \glmptb \gnot{\hspace{-0,34cm}\tau_{B,F}} \grmptb \gcl{1} \gcn{1}{4}{0}{0} \gnl
\gvac{3} \gcn{1}{1}{0}{1} \gcn{1}{1}{0}{1} \glmptb \gnot{\hspace{-0,34cm}\tau_{F,F}} \grmptb \glm  \gnl
\gvac{1} \gcn{2}{2}{2}{5} \gcn{1}{1}{1}{3} \glm \gwmu{3} \gnl
\gvac{4} \glmptb \gnot{\hspace{-0,34cm}\tau_{B,F}} \grmptb \gcn{1}{1}{3}{1} \gnl
\gvac{3} \glm \grmo \gcl{1} \gvac{3} \gcn{1}{1}{0}{-1} \gnl
\gvac{4} \gcl{1} \gwmu{4}  \gnl
\gvac{4} \gob{1}{F} \gob{4}{B} 
\gend}
\stackrel{\equref{1-3}}{=}
\scalebox{0.84}[0.84]{
\gbeg{6}{9}
\got{1}{B} \got{2}{B} \got{2}{F}\gnl
\gcl{3} \gcmu \gcmu \gnl
\gvac{1} \gcl{1} \glmptb \gnot{\hspace{-0,34cm}\tau_{B,F}} \grmptb \gcl{1} \gnl
\gvac{1} \glm \grmo \gcl{1} \gnl
\hspace{-0,34cm} \gcmu \gcmu \gcn{1}{2}{0}{0} \gnl
\gcl{1} \glmptb \gnot{\hspace{-0,34cm}\tau_{B,F}} \grmptb \gcl{1} \gnl
\glm \grmo \gcl{1} \gcn{1}{1}{2}{1} \gnl
\gvac{1} \gcl{1} \gmu \gnl
\gvac{1} \gob{1}{F} \gob{2}{B} 
\gend}=
\gbeg{4}{5}
\got{1}{B} \got{1}{B} \got{1}{F} \gnl
\gcl{1} \glmptb \gnot{\hspace{-0,34cm}\widetilde{\psi}} \grmptb \gnl
\glmptb \gnot{\hspace{-0,34cm}\widetilde{\psi}} \grmptb \gcl{1} \gnl
\gcl{1} \gmu \gnl
\gob{1}{F} \gob{2}{B} 
\gend
$$
We clarify only the brief notations in the above equalities that possibly are not clear enough: 
in the third equation we applied the monad distributive law for $\tau$, in the fifth one the comonad distributive law for $\tau$ and left $B$-module 
structure of $F$, in the seventh one the comonad distributive law for $\tau$, in the eighth one the right $F$-module structure on $B$ and in the ninth one the third equation in 
\equref{1-3}. Observe that in order for the comultiplication of $B$ to be coassociative 
we need the third of the above assumptions. 
Moreover, in order for the right $F$-action on $B$ to be proper, by \equref{weak action} it should be either trivial (in which case we are in the setting of $\psi_1$ from \prref{psi_4}), 
or the fourth assumption above should be fulfilled. 

The proof that $\mu_M$ canonical (assuming that $\sigma$ is trivial) satisfies the 2-cell condition is analogous: take left-right symmetric diagrams and interchange the r\^oles of $B$ and $F$. 
Recall that we called this {\em $\alpha$-symmetry}. Then the statement to prove, as well as the proof itself, are $\alpha$-symmetric to those from above. One part of the conditions 
necessary to prove the $\psi$ axioms is already auto $\alpha$-symmetric, from those conditions which do not contain their $\alpha$-symmetric counterpart, 
we need to add: \equref{proj F-bialg}, \equref{mod alg} and the second equation in \equref{1-3}. 

The monad law for $\mu_M$ comes down to associativity of $F$, it is fulfilled by \equref{weak assoc. mu_M}, since $\mu_M$ is canonical. The necessary relations for the units are proved 
straighforwardly. 
\qed\end{proof}

\bigskip

By $\pi$-symmetry, we have:

\begin{prop} \prlabel{phi_4}
Set $\phi'_4=
\gbeg{4}{5}
\got{1}{} \got{1}{F} \got{1}{B} \gnl
\grmo \gvac{1} \gcl{1} \grcm \gnl
\gcl{1} \glmptb \gnot{\hspace{-0,34cm}\tau_{F,B}} \grmptb \gcl{1} \gnl
\gmu \gmu \gnl
\gob{2}{B} \gob{2}{F} 
\gend$. Then $(B, \phi'_4, \Delta_C' \hspace{0,2cm}\textnormal{canonical})$ is a right cowreath around $F$ if the following assumptions hold true: 
\begin{enumerate}
\item $(B,F)$ is a comonad Hopf datum and $B$ is a monad; 
\item there are 2-cells 
$
\gbeg{2}{3}
\got{1}{B} \got{1}{F} \gnl
\glm \gnl
\gob{3}{F}
\gend,
\hspace{0,3cm}
\gbeg{2}{3}
\got{1}{B} \got{1}{F} \gnl
\grmo \gcl{1} \gnl
\gob{1}{B} 
\gend,
\hspace{0,3cm}
\gbeg{3}{3}
\got{1}{F} \got{1}{F}  \gnl
\glmpt \gnot{\hspace{-0,34cm}\sigma} \grmptb \gnl
\gob{3}{B}
\gend
$ such that \equref{proj F-bialg}, \equref{proj B-bialg} and the first and third equation in \equref{4-6} hold; 
\item $F$ is a monad (or, \equref{weak assoc. mu_M}--\equref{weak unity eta_M} hold and 
$\gbeg{2}{3}
\got{1}{B} \got{1}{F} \gnl
\glm \gnl 
\gob{3}{F} \gnl
\gend$ is trivial or $\sigma$ is trivial (hence $\mu_M$ is canonical);
\item $\rho'$ is trivial (hence $\Delta_C'$ is canonical);
\item $\tau_{F,B}$ is natural with respect to 
$\gbeg{2}{3}
\got{3}{F} \gnl
\grmo \gvac{1} \gcl{1} \gnl 
\gob{1}{B} \gob{1}{F} \gnl
\gend$ and 
$\gbeg{2}{3}
\got{1}{B} \gnl
\grcm \gnl 
\gob{1}{B} \gob{1}{F} \gnl
\gend$ (recall \equref{nat lcm}, \equref{nat rcm}).
\end{enumerate}
\end{prop}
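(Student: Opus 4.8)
The plan is to obtain \prref{phi_4} entirely by $\pi$-symmetry from \prref{psi_4}, so that no fresh computation is needed: the whole content is to certify that the statement of \prref{phi_4} is, line for line, the $\pi$-symmetric image of the statement of \prref{psi_4}. Recall that $\pi$-symmetry composes the up--down rotation of the string diagrams (which turns monads into comonads, multiplications into comultiplications, and each defining axiom into its up--down mirror) with $\alpha$-symmetry (which interchanges the base/wreath roles of $B$ and $F$). The paper has already established that the definition of a paired wreath and that of a Hopf datum are auto $\pi$-symmetric, and that a \emph{comonad} Hopf datum is by definition the $\pi$-dual of a \emph{monad} Hopf datum (\deref{mHd}). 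Hence the argument reduces to reading off a dictionary.

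First I would record that $\phi'_4$ is the $\pi$-image of $\psi_4$: rotating the defining diagram of $\psi_4$ and interchanging the roles of $B$ and $F$ replaces the two comultiplications by the two multiplications, $\tau_{B,F}$ by $\tau_{F,B}$, the left $B$-module action \equref{F left B-mod} by the left $B$-comodule coaction \equref{left B-comod}, and the right $F$-module action \equref{right F-mod} by the right $F$-comodule coaction \equref{B right F-comod}; the result is exactly the $\phi'_4$ displayed in the statement. Dually, the conclusion transforms correctly: ``$(F,\psi_4,\mu_M\ \textnormal{canonical})$ is a left wreath around $B$'' is governed by the five axioms \equref{psi laws for B}, \equref{2-cell mu_M}, \equref{2-cell eta_M}, \equref{monad law mu_M}, \equref{monad law eta_M}, and these map under $\pi$-symmetry precisely onto \equref{phi' laws for B}, \equref{2-cell Delta_C'}, \equref{2-cell Epsilon_C'}, \equref{comonad law Delta_C'}, \equref{comonad law Epsilon_C'}, which are the defining axioms of ``$(B,\phi'_4,\Delta_C'\ \textnormal{canonical})$ is a right cowreath around $F$''.

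Next I would match the five hypotheses one by one. Assumption~1 of \prref{psi_4} (a monad Hopf datum with $F$ a comonad) $\pi$-dualizes to a comonad Hopf datum with $B$ a monad, i.e.\ assumption~1 here; assumption~4 ($\sigma$ trivial, $\mu_M$ canonical) dualizes to $\rho'$ trivial with $\Delta_C'$ canonical, i.e.\ assumption~4. For assumption~2 one uses that \equref{proj F-bialg} and \equref{proj B-bialg} are $\pi$-symmetric to one another, so their conjunction is $\pi$-invariant, and that the second and third identities of \equref{1-3} are the $\pi$-images of the first and third identities of \equref{4-6}; this is exactly assumption~2. Assumption~3, together with its parenthetical alternatives, dualizes between ``$B$ a comonad'' with \equref{weak coass. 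Delta_C'}--\equref{weak counit Epsilon_C'} and ``$F$ a monad'' with \equref{weak assoc. mu_M}--\equref{weak unity eta_M}, the triviality of the coaction \equref{B right F-comod} (resp.\ of $\rho'$) dualizing to triviality of the action \equref{F left B-mod} (resp.\ of $\sigma$). Finally, the naturality of $\tau_{B,F}$ expressed by \equref{nat rm}, \equref{nat lm} dualizes to the naturality of $\tau_{F,B}$ expressed by \equref{nat lcm}, \equref{nat rcm}, precisely because these naturality conditions were introduced in the Definition as $\pi$-symmetric pairs.

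With the dictionary in place, the proof of \prref{psi_4} transports verbatim: each intermediate step there---the monadic and comonadic distributive laws for the various $\tau$'s, (co)associativity of $B$ and $F$, the left $B$-module and right $F$-module structures, and the invocations of \equref{proj B-bialg}, \equref{F mod alg}, \equref{nat rm}, \equref{nat lm} and \equref{1-3}---has a $\pi$-symmetric counterpart valid under the present hypotheses, and the final chain of equalities reproduces the $\phi'$ axioms, the $2$-cell condition for $\Delta_C'$ and the comonad laws. The only point requiring genuine care, which I regard as the main (if modest) obstacle, is the bookkeeping: because $\pi$-symmetry simultaneously dualizes the (co)monad structure and swaps the base/wreath roles of $B$ and $F$, one must confirm that the left/right placement in the (co)actions, the ``or''-alternatives of assumption~3, and the naturality pairs really are $\pi$-matched as claimed, and that no auto $\pi$-symmetric hypothesis used in \prref{psi_4} is silently lost. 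Once this matching is certified, applying $\pi$-symmetry to \prref{psi_4} yields the conclusion.
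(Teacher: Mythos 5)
Your proposal is correct and takes essentially the same route as the paper: the paper's entire proof of \prref{phi_4} is the phrase ``By $\pi$-symmetry, we have:'' prefixed to the statement, i.e.\ precisely the transfer from \prref{psi_4} that you carry out. Your explicit dictionary (matching $\phi'_4$ with $\psi_4$, the comonad/monad Hopf datum hypotheses, the second and third identities of \equref{1-3} with the first and third of \equref{4-6}, and \equref{nat rm}, \equref{nat lm} with \equref{nat lcm}, \equref{nat rcm}) is accurate and in fact more detailed than what the paper records.
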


\bigskip

To the 2-cells 
$\gbeg{2}{1}
\glm \gnl
\gend, 
\gbeg{2}{1}
\grmo \gcl{1} \gnl 
\gend, 
\gbeg{2}{1}
\glmpt \gnot{\hspace{-0,34cm}\sigma} \grmptb \gnl
\gend 
$ \hspace{0,1cm}
in a monad Hopf datum we will assign numbers 0 or 1, depending on whether the action and cocycle are trivial or not. The first two 2-cells determine $\psi$. 
Then a monad Hopf datum determined by the former three 2-cells we will denote shortly by $((i,j),k)$ with $i,j,k\in\{0,1\}$, where the first two entries correspond to the action 2-cells, 
and the third one to the cocycle. Similarly, we will assign a 3-tuple $((\crta i,\crta j),\crta k)$, with $\crta i,\crta j,\crta k\in\{0,1\}$,  to the comonad Hopf datum determined by the 
2-cells 
$\gbeg{2}{1}
\grcm \gnl
\gend, 
\gbeg{2}{1}
\grmo \gvac{1} \gcl{1} \gnl
\gend, 
\gbeg{3}{1}
\glmpb \gnot{\hspace{-0,34cm}\rho'} \grmptb \gnl
\gend$ in this same order. Neglecting the two simplest (co)monad Hopf data studied above, we present in the following table the rest of monad Hopf data which we studied in \prref{psi_1,2} and 
\prref{psi_4} and which, under respective hypotheses, determine a wreath, together with their $\pi$-symmetric companions: 
\begin{table}[h!]
\begin{center}
\begin{tabular}{ c c } 
 $(\psi, \sigma)$ & \hspace{0,2cm} ($\pi$-symmetric $\phi'$, $\rho'$) \\ [0.5ex]
\hline
$ ((1,0),0)$ & $((\crta 0,\crta 1),\crta 0)$ \\ [1ex]   
 $((0,1),0)$ & $((\crta 1,\crta 0),\crta 0)$  \\ [1ex]
 $((0,1),1)$ & $((\crta 1,\crta 0),\crta 1)$  \\ [1ex]
 $((1,1),0)$ & $((\crta 1,\crta 1),\crta 0)$  \\ [1ex]
\end{tabular}
\caption{(Co)monad Hopf data which are (co)wreaths}
\label{table:1}
\end{center}
\end{table}\\ 
\indent If we are to decide which Hopf data determine a paired wreath, we observe that only $\psi_4$ presented restrictions on its $\pi$-symmetric companion (see condition 3. in \prref{psi_4}). 
However, from the above table it is clear that $\psi_4$ may be combined will all the resting four $\phi'$'s. So, from Table \ref{table:1} we possibly have $4\times 4=16$ paired wreaths. 
It remains to study when a Hopf datum implies conditions 5 and 6 of a paired wreath. 

Let us analyze the condition 5. From \equref{lambda B} we know the form that 2-cell $\lambda_B$ should have. We only may claim, which is directly checked, 
that \equref{proj B} is fulfilled if either $\sigma$ is trivial, or $\sigma$ is non-trivial (then we are in the setting of $((0,1),1)$) and 
$\gbeg{2}{1}
\grcm \gnl
\gend$ is trivial. 
The latter case by $\pi$-symmetry yields that $((\crta 1,\crta 0),\crta 1)$ can coexist in a paired wreath only with 
$((0,1),0)$ and $((0,1),1)$. Summing up, from the above 16 combinations the 
ones in Table \ref{table:2} certainly fulfill condition 5. 
\begin{table}[h!]
\begin{center}
\begin{tabular}{ c c } 
 $(\psi, \sigma)$ & \hspace{0,2cm} ((matching cowreaths) \\ [0,5ex]
\hline
 $ ((0,1),1)$ & $((\crta 1,\crta 0),\crta 0), ((\crta 1,\crta 0),\crta 1)$  \\ [1ex]
  $((0,1),0)$ & all 4  \\ [1ex]
$((1,0),0)$ & \hspace{0,8cm}  $((\crta 0,\crta 1),\crta 0), ((\crta 1,\crta 0),\crta 0), ((\crta 1,\crta 1),\crta 0)$  \\ [1ex]   
 $((1,1),0)$ & \hspace{0,8cm}  $((\crta 0,\crta 1),\crta 0), ((\crta 1,\crta 0),\crta 0), ((\crta 1,\crta 1),\crta 0)$  \\ [1ex]
\end{tabular}
\caption{(Co)wreaths satisfying \equref{proj B} / \equref{proj F}}
\label{table:2}
\end{center}
\end{table}

As far as for condition 6, consider the case where the four (co)action 2-cells \equref{(co)module cells} are non-trivial and $\sigma$ and $\rho'$ are trivial. 
The proof in string diagrams of the fourth relation in \deref{tau-bim} in this setting, with 
$\tau_{FB,FB}$ given as in \equref{tau FB}, is pretty tedious and possibly leads to a never ending loop. We proved, however, that if one of the 
four (co)action 2-cells is trivial, then the Hopf datum is a bimonad. (Observe that if one proves the result choosing any of the four 2-cells to be trivial, then 
the result holds true if any other of the four 2-cells is trivial, because of the auto $\alpha$- and $\pi$-symmetry of a Hopf datum, \rmref{when cocycles are trivial}.) 
Instead of presenting here our proof in string diagrams, which is tedious, we make the following observation. 
In \cite[Section 2]{BD1} the authors use a remarkable tool in the context of a Hopf datum in a braided monoidal category $\C$ that can fully be taken over and used in our context of 
a Hopf datum in a general 2-category $\K$. It serves to study a Hopf datum of the form $((1,1),0)$ with $((\crta 1,\crta 1),\crta 0)$ and it helps to decide when the 
identity \equref{paired biwreath biproduct} in this setting is fulfilled. Namely, they introduce a {\em recursive Hopf datum}, they prove in \cite[Theorem 2.14]{BD1} 
that for a recursive Hopf datum $(B,F)$ the object $B\ot F$ is a bialgebra in $\C$ (the 1-cell $FB$ is a bimonad in $\K$) and in \cite[Definition and Proposition 2.15]{BD1} they prove that 
every {\em trivalent} Hopf datum is a recursive Hopf datum. A trivalent Hopf datum is a one for which $\sigma$ and $\rho'$ are trivial and at most three of the four (co)action 2-cells 
\equref{(co)module cells} are non-trivial. 
Thus we may state:

\begin{thm} \thlabel{sufficient for bimonad}
Let $(B,F)$ be a Hopf datum in $\K$ determined by $((i,j),0)$ with $((\crta k,\crta l),\crta 0)$ with $i,j,k,l\in\{0,1\}$ and so that $i+j+k+l\leq 3$. 
Suppose that the 2-cells $\tau$ satisfy the naturality conditions and Yang-Baxter equations \equref{nat rm} -- \equref{nat sigma}, \equref{YBE FBF} -- \equref{YBE FF}. 
Then $FB$ is a $\tau$-bimonad in $\K$.   

In view of the above said, all the trivalent Hopf data from Table \ref{table:2} are paired wreaths. Consequently, in the latter cases a Hopf datum is equivalent to a paired wreath.
\end{thm}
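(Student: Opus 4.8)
The plan is to reduce the claim that $FB$ is a $\tau$-bimonad to the single bialgebra-type compatibility \equref{paired biwreath biproduct}, and then to establish that identity by transporting the recursive machinery of \cite{BD1} to the 2-categorical setting. First I would observe that the first three defining conditions of a $\tau$-bimonad in \deref{tau-bim}, applied to the monad $(FB,\nabla_{FB},\eta_{FB})$ and the comonad $(FB,\Delta_{FB},\Epsilon_{FB})$ of \equref{wreath (co)product}, are already guaranteed by the Hopf-datum structure: the counit--multiplication and unit--comultiplication compatibilities follow from \equref{(co)unital}, \equref{psi unital add}, \equref{epsilon to sigma} and \equref{unit to ro'}, exactly as recorded in the discussion preceding \equref{paired biwreath biproduct}. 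Hence only the fourth axiom of \deref{tau-bim}, the equality of the two sides of \equref{paired biwreath biproduct} with $\tau_{FB,FB}$ as in \equref{tau FB}, remains to be verified.

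Next I would exploit symmetry to cut down the number of cases. Since $i+j+k+l\le 3$, at least one of the four (co)action 2-cells \equref{F left B-mod}--\equref{left B-comod} is trivial; moreover, by \rmref{when cocycles are trivial}, a Hopf datum with $\sigma$ and $\rho'$ trivial is auto $\alpha$- and $\pi$-symmetric. Therefore it suffices to prove \equref{paired biwreath biproduct} under the single normalisation that one chosen (co)action 2-cell, say the left $B$-action \equref{F left B-mod} on $F$, is trivial; the remaining trivalent configurations are recovered from this one by composing the $\alpha$- and $\pi$-symmetries, under which the hypotheses \equref{nat rm}--\equref{nat sigma} and \equref{YBE FBF}--\equref{YBE FF} are merely permuted among themselves.

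The heart of the argument is then to carry over the notion of a \emph{recursive Hopf datum} from \cite[Section 2]{BD1}. Here I would introduce, in $\K$, the recursive replacements of the structure 2-cells that enter \equref{paired biwreath biproduct}, using the left and right monadic/comonadic distributive laws $\tau_{B,F},\tau_{F,B},\tau_{B,B},\tau_{F,F}$ wherever \cite{BD1} uses the braiding, and check that the naturality conditions \equref{nat rm}--\equref{nat sigma} together with the Yang--Baxter equations \equref{YBE FBF}--\equref{YBE FF} supply precisely the identities that make the recursion of \cite[Definition and Proposition 2.15]{BD1} well defined and terminating. Granting this, the 2-categorical analogue of \cite[Proposition 2.15]{BD1} shows that every trivalent Hopf datum is recursive, and the 2-categorical analogue of \cite[Theorem 2.14]{BD1} shows that for a recursive Hopf datum the two sides of \equref{paired biwreath biproduct} coincide, so $FB$ is a $\tau$-bimonad. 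The second assertion then follows immediately: the data in Table~\ref{table:2} already satisfy conditions 2 and 5 of \deref{paired bl object} by \prref{psi_1,2} and \prref{psi_4}, and the present computation supplies condition 6, so each such Hopf datum is a paired wreath; combined with \coref{Hopf data} this yields the claimed equivalence.

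The main obstacle I anticipate is the faithful transcription of the recursive Hopf datum of \cite{BD1} into diagrammatic identities in $\K$. In a braided category the braiding is one natural isomorphism satisfying the hexagons, whereas here it is replaced by four a priori unrelated distributive laws $\tau_{B,F},\tau_{F,B},\tau_{B,B},\tau_{F,F}$, so the single naturality-of-the-braiding input of \cite{BD1} fragments into the separate hypotheses \equref{nat rm}--\equref{nat sigma} and \equref{YBE FBF}--\equref{YBE FF}. Confirming that these suffice, that no compatibility tacitly used in \cite{BD1} through naturality of the braiding has been lost, is the delicate point, and it is precisely the reason a direct string-diagram attack on \equref{paired biwreath biproduct} threatens to loop indefinitely, whereas the recursive formulation breaks the circularity.
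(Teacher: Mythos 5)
Your proposal follows essentially the same route as the paper: it reduces the claim to the fourth $\tau$-bimonad axiom (the identity \equref{paired biwreath biproduct}), disposes of the first three axioms via \equref{(co)unital}, \equref{psi unital add}, \equref{epsilon to sigma} and \equref{unit to ro'}, uses the $\alpha$- and $\pi$-symmetry of \rmref{when cocycles are trivial} to reduce cases, and then invokes the recursive Hopf datum machinery of \cite[Theorem 2.14, Proposition 2.15]{BD1} transported to $\K$ with the $\tau$'s in place of the braiding, exactly as the paper does. Your closing caveat about verifying that the separate naturality and Yang--Baxter hypotheses on the four distributive laws really substitute for naturality of a single braiding is a fair flag of the one point the paper asserts rather than spells out, but it is the same argument.
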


The Theorem applies also to the monad Hopf datum studied in \prref{simplest case}, which yields the following four Hopf data which simultaneously are paired wreaths: 
\begin{table}[h!]
\begin{center}
\begin{tabular}{ c  c } 
 $(\psi, \sigma)$ &  (matching cowreaths) \\ [0,5ex]
\hline
 $ ((0,0),0)$ & \hspace{0,4cm} $((\crta 0,\crta 0),\crta 0), ((\crta 1,\crta 0),\crta 0), ((\crta 0,\crta 1),\crta 0), ((\crta 1,\crta 1),\crta 0)$  \\ [1ex]
\end{tabular}
\end{center}
\end{table}

Observe that the three cases in the third line in Table \ref{table:2} are $\alpha$-symmetric to the corresponding ones in the second line. So in total there are 2+4+3+4=13 non-isomorphic 
trivalent paired wreaths. 

\subsection{Examples}

When $\K=\hat\C$ where $\C$ is a braided monoidal category, the r\^ole of the 2-cells $\tau$ is played by the braiding. If $\C$ is not braided, $\tau$ is a local braiding, 
existing between the objects $B$ and $F$ and all their combinations (recall the (co)monadic distributive law properties, Yang-Baxter equations and naturalities for $\tau$).

\begin{ex}
When $\K=\hat\C$, where $\C$ is a monoidal category, a paired wreath and a Hopf datum are written out in the string diagrams on the previous pages, where the strings (2-cells) are morphisms, 
their source and targets (1-cells) are objects, and $B$ is an algebra, $F$ a coalgebra in $\C$, satisfying the listed axioms. 
\end{ex}

\begin{ex}
The paired wreath $((1,0),0), ((\crta 1,\crta 0),\crta 0)$ is a 2-categorical version of Radford biproduct bialgebra. 
Set $\K=\hat Vec$.  
Then \cite[Theorem 2.1 and Proposition 2]{Rad1}, that characterizes when Radford biproduct $B\times F$ is a bialgebra, is a special case of \thref{sufficient for bimonad} in 
the setting $((1,0),0), ((\crta 1,\crta 0),\crta 0)$, {\em i.e.}  $(\psi_1, \phi'_2)$. This 2-categorical formulation of the Radford biproduct coincides with our 
biwreath from \cite{Femic5}. 
\end{ex}

\begin{ex} \exlabel{ex 2}
The paired wreath $((1,0),0), ((\crta 0,\crta 1),\crta 0)$ is a 2-categorical version of Majid's bicrossproduct. 
Similarly, as in the above example, in $\K=\hat Vec$ there is \cite[Theorem 3.3]{Maj5}, which characterizes when Majid's bicrossproduct is a bialgebra. 
It is a special case of \thref{sufficient for bimonad} in 
the setting $((1,0),0), ((\crta 0,\crta 1),\crta 0)$, {\em i.e.}  $(\psi_1, \phi'_1)$ (or $\alpha$-symmetrically: $((0,1),0), ((\crta 1,\crta 0),\crta 0)$, {\em i.e.}  $(\psi_2, \phi'_2)$). 
\end{ex}

Observe that since in the above two examples $\sigma$ and $\rho'$ are trivial, {\em i.e.} the 2-cels $\mu_M$ and $\Delta_C'$ are canonical, from the bialgebra condition on 
$B\times F$ one may deduce the compatibility conditions for $F$ in \equref{(co)unital} and \equref{(co)unital mixed}, as well as \equref{psi unital add} (or the compatibility 
conditions for the (co)actions in \coref{(co)actions}). In a general setting, in a paired wreath though, we are forced to require these compatibility conditions as part of the definition.

\begin{ex}
The wreath $((0,1),1)$ in general $\K$ is a 2-categorical generalization of Sweedler's cocycle twisted smash product algebra, \cite{Sw1}. Moreover, it is the 2-categorical formulation 
of the biwreath-like object we studied in \cite{Femic5} for $\K=\hat\C$, where $\C$ is a braided monoidal category. 
Its $\pi$-symmetric cowreath is the dual construction representing Sweedler's cycle twisted cosmash coproduct coalgebra. 
\end{ex}

\begin{ex}
At the end of \cite{Maj6} Majid included {\em cocycle bicrossproducts} which are bialgebras in $Vec$. They correspond to $((0,1),1), ((\crta 1,\crta 0),\crta 1)$ 
in our setting. Namely, Majid's conditions C, A, B, D correspond to our first and second identities in \equref{1-3} and \equref{4-6}, respectively. 
(To see the latter one of the four, write out our identity and apply \equref{F mod alg} and \equref{B comod coalg}.)  
In Table \ref{table:2} we see that in the 2-categorical setting it only remains to prove that $FB$ is a bimonad in order to be able to claim that $(B,F)$ is 
a paired wreath.  
\end{ex}

\begin{ex}
In \cite[Proposition 3.12]{Maj5} Majid characterized a {\em matched pair of bialgebras} $(B,F)$ in $Vec$. It turns out to be a Hopf datum for $\K=\hat Vec$ in the 
setting $((1,1),0), ((\crta 0,\crta 0),\crta 0)$, {\em i.e.}  $(\psi_4, \tau_{F,B})$. The ``fourth bialgebra compatibility'' for $B$ and $F$ are our identities 
\equref{proj B-bialg} and \equref{proj F-bialg}, left and right module coalgebra assumptions are our second and third identity in \equref{1-3}, and Majid's conditions 
A, B, C are our identities \equref{mod alg}, \equref{F mod alg} and the first identity in \equref{1-3}, respectively. The rest of our identities in a Hopf datum hold trivially, 
since the two coactions and the (co)cycles are trivial. 

Matched pairs of bialgebras in any braided monoidal category $\C$ are treated in \cite{BD1, ZC, Femic1}. In \cite[Theorem 1.4]{ZC} it is proved that given a 
matched pair of bialgebras $(B,F)$ in $\C$ the ``wreath'' product $B\times F$ (usually called {\em double cross product} in this setting) is a bialgebra in $\C$. 
Recall that the Drinfel`d double is a particular case. 

\thref{sufficient for bimonad} confirms the result from the 2-categorical viewpoint. 
\end{ex}

\begin{ex} \exlabel{ex 6}
One of the Hopf data for which we could not prove to be a paired wreath (neither a wreath) is determined by $((1,1),1), ((\crta 1,\crta 0),\crta 0)$, {\em i.e.} $(\psi_4, \phi'_2)$. 
If $\K=\hat Vec$, this Hopf datum is Schauenburg's cosmash product defined in \cite[Theorem 5.1]{Sch3}. 
\end{ex}

We end this subsection proving the following result, which is a generalization of \cite[Lemma 5.2]{Femic5}. 

\begin{lma} \lelabel{lambda d.l.}
In a paired wreath and a Hopf datum determined by $((1,0),0), ((\crta 1,\crta 0),\crta 0)$, {\em i.e.} Radford biproduct in $\K$, 
the 1-cell $B$ is a right bimonad, and $F$ is a left bimonad in $\K$. 
\end{lma}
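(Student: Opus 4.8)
\proof
Since every paired wreath is a Hopf datum (\coref{Hopf data}), it suffices to argue in the Hopf datum determined by $((1,0),0),((\crta 1,\crta 0),\crta 0)$. The plan is to exhibit $F$ as a $\tau$-bimonad with structure 2-cell $\tau_{F,F}$ and $B$ as a $\tau$-bimonad with structure 2-cell $\tau_{B,B}$, in the sense of \deref{tau-bim}, and then to invoke the observation following that definition, whereby a $\tau$-bimonad is in particular a left bimonad (with distributive law $\lambda_l$) and a right bimonad (with distributive law $\lambda_r$). As the two assertions are interchanged by $\pi$-symmetry, I will carry out the argument for $F$ and then transport it to $B$.

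First I record that both $B$ and $F$ carry compatible monad and comonad structures. Indeed, $B$ is a monad and $F$ a comonad by the point 1 of the definition of a Hopf datum; and since in the present datum $\sigma$ and $\rho'$ are trivial, \rmref{when cocycles are trivial} guarantees that $F$ is moreover a monad and $B$ moreover a comonad, and that the (co)actions \equref{right F-mod} and \equref{left B-comod} are proper. By hypothesis the 2-cells $\tau_{F,F}$ and $\tau_{B,B}$ are left and right monadic and comonadic distributive laws with respect to precisely these honest (co)monad structures. Finally, in $((1,0),0),((\crta 1,\crta 0),\crta 0)$ the right $F$-action \equref{right F-mod} on $B$ and the left $B$-coaction \equref{left B-comod} on $F$ are both trivial.

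Next I verify the four conditions of \deref{tau-bim} for $F$ with $\tau_{F,F}$. The three (co)unital ones --- the counit of the multiplication, the unit of the comultiplication, and $\Epsilon_F\eta_F=\Id$ --- are among the compatibility relations of point 1 of the Hopf datum (equivalently \equref{(co)unital} and \equref{(co)unital mixed}). For the remaining multiplication--comultiplication compatibility I would start from \equref{proj F-bialg}, whose last right-hand side, obtained through \equref{lambda F}, is built from $\Delta_F$, $\tau_{F,F}$, the left $B$-coaction \equref{left B-comod} on $F$ and the left $B$-action \equref{F left B-mod} on $F$. Since the coaction is trivial it merely inserts $\eta_B$, which is then absorbed by the left $B$-module unit axiom $\nu(\eta_B\times\Id_F)=\Id_F$ (point 2 of the Hopf datum, \deref{(co)modules}(a)); the auxiliary $B$-strand so created and annihilated disappears. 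Consequently $\lambda_F$ of \equref{lambda F} collapses to the 2-cell $\lambda_l$ of \deref{tau-bim} (read with $F$ and $\tau_{F,F}$ in the roles of $B$ and $\tau_{B,B}$), and the final form of \equref{proj F-bialg} reduces to exactly the fourth condition of \deref{tau-bim}, namely $\Delta_F\nabla_F=(\nabla_F\times\nabla_F)(\Id_F\times\tau_{F,F}\times\Id_F)(\Delta_F\times\Delta_F)$. Hence $F$ is a $\tau$-bimonad, and therefore a left bimonad with $\lambda_F=\lambda_l$.

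The statement for $B$ is the $\pi$-symmetric (up--down together with the interchange of the roles of $B$ and $F$) counterpart. Dually, the triviality of the right $F$-action \equref{right F-mod} on $B$ and the right $F$-comodule counit axiom $(\Id_B\times\Epsilon_F)\rho=\Id_B$ (\deref{(co)modules}(d)) collapse $\lambda_B$ of \equref{lambda B} to $\lambda_r$ of \deref{tau-bim} (with $\tau_{B,B}$), and the final form of \equref{proj B-bialg} becomes the fourth condition of \deref{tau-bim} for $B$ with $\tau_{B,B}$; the (co)unital conditions are again supplied by point 1 of the Hopf datum. Thus $B$ is a $\tau$-bimonad, hence a right bimonad with $\lambda_B=\lambda_r$. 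I expect the only genuinely delicate point to be the diagrammatic collapse of \equref{lambda F} to $\lambda_l$ (and dually \equref{lambda B} to $\lambda_r$): one must check that detaching the auxiliary strand produced by the trivial (co)action does not obstruct $\tau_{F,F}$ (resp. $\tau_{B,B}$), for which the module (resp. comodule) (co)unit axiom above, together with the naturality relation \equref{nat lm} (resp. \equref{nat rcm}) in case the strand must be slid past $\tau$, is exactly what is needed; everything else is a direct transcription of the already-derived identities into the hypotheses of \deref{tau-bim}.
\qed
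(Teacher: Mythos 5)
Your treatment of $B$ is essentially fine, and is in fact a more explicit version of the paper's one-line appeal to $\pi$-symmetry: in this datum the right $F$-action \equref{right F-mod} on $B$ is trivial, so the counit axiom of the right $F$-comodule collapses $\lambda_B$ of \equref{lambda B} to the 2-cell $\lambda_r$ of \deref{tau-bim}, and \equref{proj B-bialg} then becomes the fourth $\tau$-bimonad axiom for $B$, the (co)unital axioms and the comonad structure of $B$ being supplied by point 1 of the Hopf datum and by \rmref{when cocycles are trivial}. The gap is in the half concerning $F$, which is the actual substance of the Lemma. You assume that the left $B$-coaction \equref{left B-comod} on $F$ is trivial; you were apparently misled by the sentence introducing the barred tuples (which lists the right $F$-coaction on $B$ first), but in the usage of Table \ref{table:1}, Table \ref{table:2} and the Examples --- and as the words ``i.e.\ Radford biproduct'' and the identification of this datum with $(\psi_1,\phi_2')$ make explicit --- the Lemma's setting has a \emph{non-trivial} left $B$-coaction on $F$ (the Yetter--Drinfel'd coaction) alongside the non-trivial left $B$-action; what is trivial is the pair of right structures on $B$ together with $\sigma$ and $\rho'$. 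Compare \equref{lambda Radford}, where both the left action and the left coaction enter $\lambda_{Rad}$; the 2-cell \equref{lambda F} is its 2-categorical form.

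With the coaction non-trivial, $\lambda_F$ of \equref{lambda F} does not collapse: the $B$-strand created by the coaction is consumed by the action only after crossing $\tau_{F,F}$, and no unit, counit, or naturality axiom detaches it. Indeed your conclusion --- that $F$ satisfies the fourth axiom of \deref{tau-bim} with $\tau_{F,F}$, i.e.\ $\Delta_F\mu_F=(\mu_F\times\mu_F)(\Id_F\times\tau_{F,F}\times\Id_F)(\Delta_F\times\Delta_F)$ --- is false at this level of generality: for $\K$ induced by vector spaces, $F$ is a Hopf algebra in the Yetter--Drinfel'd category over $B$, and such an $F$ is not a bialgebra in the base category with respect to the flip; its multiplication--comultiplication compatibility is precisely the $\lambda_F$-twisted identity \equref{proj F-bialg}. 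So the route ``$F$ is a $\tau$-bimonad, hence a left bimonad'' cannot be repaired. What the Lemma requires, and what the paper proves by a long diagrammatic computation, is that the composite 2-cell $\lambda_F$ of \equref{lambda F} is itself a left monadic and comonadic distributive law; this uses \equref{proj F-bialg}, associativity of the pre-multiplication (valid since $\sigma$ is trivial, \equref{weak assoc. mu_M}), the first identity of \equref{4-6} (respectively the third identity of \equref{1-3} for the comonadic half), the identities \equref{mod alg} and \equref{B comod coalg}, and the naturality of $\tau_{F,F}$ with respect to the action and the coaction. None of this is present in, or recoverable from, your argument.
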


\begin{proof}
In view of the identity \equref{proj F-bialg} it suffices to prove that the 2-cell $\lambda_F$ from \equref{lambda F} 
is left monadic and comonadic distributive law. The claim for $B$ will then hold by $\pi$-symmetry. We find: 
$$\hspace{-0,2cm}
\gbeg{3}{5}
\got{1}{F} \got{1}{F} \got{1}{F} \gnl
\gmu \gcn{1}{1}{1}{0} \gnl
\gvac{1} \hspace{-0,34cm} \glmptb \gnot{\hspace{-0,34cm}\lambda_F} \grmptb \gnl
\gvac{1} \gcl{1} \gcl{1} \gnl
\gvac{1} \gob{1}{F} \gob{1}{F} 
\gend=
\scalebox{0.84}[0.84]{
\gbeg{5}{9}
\got{1}{F} \gvac{1} \got{1}{F}\got{1}{F}\gnl
\gwmu{3} \gcl{4} \gnl
\gvac{1} \gcl{1} \gnl
\gwcm{3} \gnl
\gcl{1} \grmo \gvac{1} \gcl{1} \gnl
\gcl{1} \gcl{1} \glmpt \gnot{\hspace{-0,34cm}\tau_{F,F}} \grmptb \gnl
\gcl{1} \glm \gcl{2} \gnl
\gwmu{3} \gnl
\gvac{1} \gob{1}{F} \gvac{1} \gob{1}{F} 
\gend}
\stackrel{\equref{proj F-bialg}}{=}
\scalebox{0.84}[0.84]{
\gbeg{7}{10}
\gvac{1} \got{1}{F} \gvac{1} \got{2}{F} \got{1}{F}\gnl
\gwcm{3} \gcmu \gcl{4} \gnl
\gcl{1} \grmo \gvac{1} \gcl{1} \gcl{1} \gcl{2} \gnl
\gcl{1} \gcl{1} \glmpt \gnot{\hspace{-0,34cm}\tau_{F,F}} \grmptb \gnl
\gcl{1} \glm \gmu \gnl
\gwmu{3} \hspace{-0,22cm} \grmo \gvac{1} \gcl{1} \gcn{1}{1}{2}{1} \gnl
\gvac{2} \gcn{1}{1}{0}{1} \gcl{1} \glmpt \gnot{\hspace{-0,34cm}\tau_{F,F}} \grmptb \gnl
\gvac{2} \gcl{1} \glm \gcl{2} \gnl
\gvac{2} \gwmu{3} \gnl
\gvac{3} \gob{1}{F} \gob{3}{F} 
\gend}\stackrel{ass. F}{\stackrel{*}{=}} 
\scalebox{0.84}[0.84]{
\gbeg{8}{13}
\gvac{1} \got{1}{F} \gvac{2} \got{1}{F} \got{3}{F}\gnl
\gwcm{3} \gwcm{3} \gcl{7} \gnl
\gcl{1} \grmo \gvac{1} \gcl{1} \gcl{1} \gvac{1} \gcl{2} \gnl
\gcl{1} \gcl{1} \glmpt \gnot{\hspace{-0,34cm}\tau_{F,F}} \grmptb \gnl
\gcl{6} \glm \gcn{1}{1}{1}{2} \gcn{1}{1}{3}{4} \gnl
\gvac{2} \gcl{5} \hspace{-0,22cm} \grmo \gvac{1} \gcl{1} \grmo \gvac{1} \gcl{1} \gnl
\gvac{3} \gcl{1} \glmpt \gnot{\hspace{-0,34cm}\tau_{F,F}} \grmptb \gcl{1} \gnl
\gvac{3} \gmu \gmu \gnl
\gvac{4} \gcn{1}{1}{0}{2} \gvac{1} \hspace{-0,34cm} \glmpt \gnot{\hspace{-0,34cm}\tau_{F,F}} \grmptb \gnl
\gvac{5} \glm \gcl{3} \gnl
\gcn{1}{1}{3}{4} \gvac{2} \gwmu{4} \gnl
\gvac{2} \hspace{-0,34cm} \gwmu{4} \gnl
\gvac{3} \gob{2}{F} \gob{6}{F} 
\gend}$$

$$\stackrel{B\x comod.}{\stackrel{B\x mod.}{=}}
\scalebox{0.84}[0.84]{
\gbeg{9}{13}
\gvac{1} \got{2}{F} \gvac{2} \got{1}{F} \got{3}{F}\gnl
\gwcm{4} \gwcm{3} \gcl{4} \gnl
\gcl{1} \gvac{1} \grmo \gvac{1} \gcl{1} \gcl{1} \grmo \gvac{1} \gcl{1} \gnl
\gcl{1} \gcn{1}{1}{3}{2} \gvac{1} \glmpt \gnot{\hspace{-0,34cm}\tau_{F,F}} \grmptb \gcl{1} \gcl{2} \gnl
\gcl{7} \gcmu \gcl{1} \glmpt \gnot{\hspace{-0,34cm}\tau_{F,F}} \grmptb \gnl
\gvac{1} \gcl{1} \glmpt \gnot{\hspace{-0,34cm}\tau_{F,F}} \grmptb \gcl{1} \gmu \gcn{1}{1}{1}{0} \gnl
\gvac{1} \glm \gcl{1} \gcn{1}{1}{1}{2} \gvac{1} \hspace{-0,34cm} \glmpt \gnot{\hspace{-0,34cm}\tau_{F,F}} \grmptb \gnl
\gvac{2} \gcn{1}{2}{2}{2} \gcn{1}{2}{2}{5} \gvac{1} \glm \gcl{5} \gnl
\gvac{6} \gcl{1} \gnl
\gvac{2} \gcn{1}{1}{2}{3} \gvac{2} \glm \gcl{3} \gnl
\gvac{3} \gwmu{4} \gnl
\gvac{1} \hspace{-0,34cm} \gwmu{5} \gnl
\gvac{3} \gob{2}{F} \gob{6}{F} 
\gend}
\stackrel{\tau}{\stackrel{nat.}{\stackrel{\equref{mod alg}}{=}}}
\scalebox{0.84}[0.84]{
\gbeg{8}{10}
\gvac{1} \got{1}{F} \gvac{2} \got{1}{F} \got{3}{F}\gnl
\gwcm{3} \gwcm{3} \gcl{2} \gnl
\gcl{1} \grmo \gvac{1} \gcl{1} \gcl{1} \grmo \gvac{1} \gcl{1} \gnl
\gcl{1} \gcl{3} \gbr \gcl{1} \glmpt \gnot{\hspace{-0,34cm}\tau_{F,F}} \grmptb \gnl
\gcl{4} \gcl{1} \gcl{2} \gcn{1}{1}{1}{3} \glm \gcl{1} \gnl
\gvac{4} \glmpt \gnot{\hspace{-0,34cm}\tau_{F,F}} \grmptb \gcl{1} \gnl
\gvac{1} \gcn{1}{1}{1}{3} \gwmu{3} \gmu \gnl
\gvac{2} \glm \gcn{1}{2}{4}{4} \gnl
\gwmu{4} \gnl
\gvac{1} \gob{2}{F} \gob{6}{F} 
\gend}
\stackrel{\tau}{=}
\scalebox{0.84}[0.84]{
\gbeg{7}{12}
\gvac{1} \got{1}{F} \gvac{1} \got{1}{F} \got{3}{F}\gnl
\gvac{1} \gcl{5} \gwcm{3} \gcl{2} \gnl
\gvac{2} \gcl{1} \grmo \gvac{1} \gcl{1} \gnl
\gvac{2} \gcl{1} \gcl{1} \glmpt \gnot{\hspace{-0,34cm}\tau_{F,F}} \grmptb \gnl
\gvac{2} \gcl{1} \glm \gcl{6} \gnl
\gvac{2} \gwmu{3} \gnl
\gwcm{3} \gcl{2} \gnl
\gcl{1} \grmo \gvac{1} \gcl{1} \gnl
\gcl{1} \gcl{1} \glmpt \gnot{\hspace{-0,34cm}\tau_{F,F}} \grmptb \gnl
\gcl{1} \glm \gcl{1} \gnl
\gwmu{3} \gwmu{3} \gnl
\gvac{1} \gob{1}{F} \gob{5}{F} 
\gend}=
\gbeg{4}{5}
\got{1}{F} \got{1}{F} \got{1}{F} \gnl
\gcl{1} \glmptb \gnot{\hspace{-0,34cm}\lambda_F} \grmptb \gnl
\glmptb \gnot{\hspace{-0,34cm}\lambda_F} \grmptb \gcl{1} \gnl
\gcl{1} \gmu \gnl
\gob{1}{F} \gob{2}{F.} 
\gend
$$
At the place * we applied the first identity in \equref{4-6}. Observe that the pre-multiplication of $F$ is associative by \equref{weak assoc. mu_M} since $\sigma$ is trivial. 
The left $B$-comodule structure on $F$ is proper, since $\rho'$ is trivial, see \equref{weak coaction}. In the fifth equality we applied naturality of $\tau_{F,F}$ with respect to 
$\gbeg{2}{1}
\glm \gnl
\gend$ and the monadic distributive law of $\tau_{F,F}$, a part from \equref{mod alg}, and the same distributive law we applied in the sixth equality. The (co)unital 
distributive law property for $\lambda$ we proved before \equref{mu-delta-final}. The resting comonadic distributive law property follows by vertical symmetry: 
one uses \equref{proj F-bialg}, the coassociativity of the comultiplication of $F$, the third identity in \equref{1-3}, left $B$-(co)module structures of $F$, comonadic 
distributive law property of $\tau_{F,F}$, naturality of $\tau_{F,F}$ with respect to 
$\gbeg{2}{1}
\grmo \gvac{1} \gcl{1} \gnl 
\gend$ 
and \equref{B comod coalg}. 
\qed\end{proof}

\subsection{New definitions arising from Hopf data and concluding remarks}

The names we put in the identities \equref{mod alg}--\equref{normalized 2-cycle ro'} suggest new definitions of the respective objects, more precisely, 1-cells in 2-categories.  
We highlight some of them here. Given a monad Hopf datum $(B,F, \psi, \mu_M)$ in $\K$ (we consider $\eta_M$ canonical), the following notions are defined through the corresponding identity: 
\begin{enumerate}
\item a (normalized) 2-cocycle, 
\item action twisted by a 2-cocycle,
\item module monad (in a broader sense: the monad is weak associative, or the action on the monad is twisted by a 2-cocycle).
\end{enumerate}
A comonad Hopf datum delivers $\pi$-symmetric notions. 

\medskip

Observe that taking into account the formula \equref{mu-delta-final} for $\mu_M$, weak associativity of the pre-multiplication on $F$ (\equref{weak assoc. mu_M}), 
the 2-cocycle condition and twisted action get their specific forms: 
$$
\gbeg{4}{4}
\got{1}{F} \got{1}{F} \got{3}{F} \gnl
\gcl{1} \gwmu{3} \gnl
\gwmu{3} \gnl
\gob{3}{F}
\gend=
\gbeg{5}{8}
\got{1}{} \got{1}{F} \gvac{1} \got{2}{F} \got{1}{F} \gnl
\gwcm{3} \gcmu \gcl{5} \gnl
\gcl{1} \grmo \gvac{1} \gcl{1} \gcl{1} \gcl{2} \gnl
\gcl{1} \gcl{1} \gbr \gnl
\gcl{1} \glm \glmpt \gnot{\hspace{-0,34cm}\sigma} \grmptb  \gnl
\gwmu{3} \gvac{1} \glm \gnl
\gvac{1} \gwmu{5} \gnl
\gob{7}{F} 
\gend
$$
\begin{center}
{\footnotesize weak associativity}
\end{center}

$$
\gbeg{6}{9}
\got{1}{F} \got{3}{F} \got{2}{F}\gnl
\gcl{5} \gwcm{3} \gcmu \gnl
\gvac{1} \gcl{1} \grmo \gvac{1} \gcl{1} \gcl{1} \gcl{2} \gnl
\gvac{1} \gcl{1} \gcl{1} \glmptb \gnot{\hspace{-0,34cm}\tau_{F,F}} \grmptb \gnl
\gvac{1} \gcl{1} \glm \glmpt \gnot{\hspace{-0,34cm}\sigma} \grmptb  \gnl
\gvac{1} \gwmu{3} \gvac{1} \gcl{2} \gnl
\glmpt \gnot{\sigma} \gcmp \grmptb  \gnl
\gvac{2} \gwmu{4} \gnl
\gob{8}{B}
\gend=
\gbeg{6}{11}
\got{3}{F} \got{2}{F} \got{2}{F} \gnl
\gwcm{3} \gcmu \gcn{1}{4}{2}{2} \gnl
\gcl{1} \grmo \gvac{1} \gcl{1} \gcl{1} \gcl{2} \gnl
\gcl{1} \gcl{1} \glmptb \gnot{\hspace{-0,34cm}\tau_{F,F}} \grmptb \gnl
\gcl{1} \glm \glmpt \gnot{\hspace{-0,34cm}\sigma} \grmpt  \gnl
\gwmu{3} \gcmu \gcmu \gnl
\gvac{1} \gcn{1}{2}{1}{3} \gvac{1} \gcl{1} \glmpt \gnot{\hspace{-0,34cm}\tau_{B,F}} \grmptb \gcl{1} \gnl
\gvac{3} \glm \grmo \gcl{1} \gnl 
\gvac{2} \glmpt \gnot{\sigma} \gcmpb \grmpt \gcl{1} \gnl
\gvac{3} \gwmu{3} \gnl
\gob{9}{B}
\gend
\hspace{1,5cm}
\gbeg{6}{11}
\got{2}{B} \got{2}{F} \got{2}{F} \gnl
\gcmu \gcmu \gcn{1}{4}{2}{2} \gnl
\gcl{1} \glmptb \gnot{\hspace{-0,34cm}\tau_{B,F}} \grmptb \gcl{1} \gnl
\glm \grmo \gcl{1} \gnl
\gvac{1} \gcl{4} \gcn{1}{1}{1}{2} \gnl 
\gvac{2} \gcmu \gcmu \gnl
\gvac{2} \gcl{1} \glmptb \gnot{\hspace{-0,34cm}\tau_{B,F}} \grmptb \gcl{1}  \gnl
\gvac{2} \glm \grmo \gcl{1}  \gnl
\gvac{1} \glmpt \gnot{\sigma} \gcmpb \grmpt \gcl{1} \gnl
\gvac{2} \gwmu{3} \gnl
\gob{7}{B}
\gend=
\gbeg{3}{10}
\got{1}{B} \gvac{1} \got{1}{F} \gvac{1} \got{2}{F} \gnl
\gcl{6} \gwcm{3} \gcmu \gnl
\gvac{1} \gcl{1} \grmo \gvac{1} \gcl{1} \gcl{1} \gcl{2} \gnl
\gvac{1} \gcl{1} \gcl{1} \gbr \gnl
\gvac{1} \gcl{1} \glm \glmpt \gnot{\hspace{-0,34cm}\sigma} \grmptb  \gnl
\gvac{1} \gwmu{3} \gvac{1} \gcl{3}  \gnl
\gvac{1} \gcn{1}{1}{3}{1}  \gnl
\grmo \gcl{1} \gnl
\gwmu{6} \gnl
\gob{6}{B}
\gend
$$ \vspace{0,4cm}
\hspace{3,6cm} {\footnotesize 2-cocycle condition} \hspace{4,2cm} {\footnotesize twisted action}

\noindent This corresponds to the monad Hopf datum $((1,1),1)$. 
When $\K=\hat Vec$ this 2-cocycle condition and twisted action read: 
\begin{multline}
 \sigma(f,g_{(1)} (g_{(2)_{[-1]}}\rtr k_{(1)})) \sigma(g_{(2)_{[0]}}, k_{(2)}) = \\
\sigma(f_{(1)}(f_{(2)_{[-1]}}\rtr g_{(1)}), (\sigma(f_{(2)_{[0]}}, g_{(2)})_{(1)}\rtr k_{(1)})) (\sigma(f_{(2)_{[0]}}, g_{(2)})_{(2)}\ltr k_{(2)}) 
\end{multline}
and 
\begin{multline}
 \sigma(b_{(1)}\rtr f_{(1)}, (b_{(2)}\ltr f_{(2)})_{(1)}\rtr g_{(1)}) ((b_{(2)}\ltr f_{(2)})_{(2)}\ltr g_{(2)}) = \\
(b\ltr (f_{(1)}(f_{(2)_{[-1]}}\rtr g_{(1)}))) \sigma(f_{(2)_{[0]}}, g_{(2)}) 
\end{multline}
for $f,g,k\in F, b\in B$, respectively.
When the left $B$-action on $F$ is trivial, this (more general) definition of a 2-cocycle and twisted action recovers the twisted action by Sweedler's 2-cocycle.

\bigskip

The identities \equref{1-3} and \equref{4-6} holding in a Hopf datum (and a paired wreath) suggest some new definitions, too. Recall, as we observed in \rmref{when cocycles are trivial}, 
that when $\sigma$ and $\rho'$ are trivial, the second and the fourth of these six identities are $\alpha$-symmetric to the third and the sixth one, respectively, 
and the fifth identity is trivially fulfilled. 
Then we obtain new definitions of a Yetter-Drinfel`d condition and module comonad and comodule monad in the obvious way. Observe that these structures appeared in \exref{ex 2}. 
But we also may consider module comonads or comodule monads with non-trivial (co)cycles, as they appear in \exref{ex 6}.


\bigskip

The benefit of having defined paired wreaths and Hopf data in 2-categories is that one may study different types of crossed (co)products in much larger class of cases, taking for 
$\K$ to be any 2-category that one may pick up. From those 
whose 0-cells are certain elements from some vector space, to those whose 0-cells are proper 2-categories, just to mention some of them. 

\medskip

As we pointed out, one may use generalized definitions of 2-cocycles, twisted actions and (co)module (co)algebras 
and study in this new setting the known constructions which are done with the classical definitions. For example, the Brauer group of $H$-module algebras, for a quasi-triangular Hopf algebra 
$H$ over a field, Galois objects, etc.

\end{document}